\documentclass[10pt]{amsart}
\setlength{\textwidth}{16cm}
\setlength{\textheight}{22cm}
\hoffset=-55pt

\usepackage{latexsym}
\usepackage{amsmath}
\usepackage{amssymb}
\usepackage{mathrsfs}
\usepackage{graphicx}
\usepackage{color}
\usepackage{pgfpages}
\usepackage{ifthen}
\usepackage{leftidx,tensor}
\usepackage[T1]{fontenc}
\usepackage[latin1]{inputenc}
\usepackage{mathtools}
\usepackage{comment}
\usepackage{dsfont}
\usepackage[nocompress]{cite}

\usepackage[shortlabels]{enumitem}
\usepackage{aliascnt}
\usepackage[bookmarks=true,pdfstartview=FitH, pdfborder={0 0 0}, colorlinks=true,citecolor=red, linkcolor=blue]{hyperref}
\usepackage{bbm}
\usepackage{nicefrac}
\theoremstyle{plain}
\newtheorem{thm}{Theorem}[section]
\newaliascnt{cor}{thm}
\newaliascnt{prop}{thm}
\newaliascnt{lem}{thm}
\newtheorem{cor}[cor]{Corollary}
\newtheorem{prop}[prop]{Proposition}
\newtheorem{lem}[lem]{Lemma}
\aliascntresetthe{cor}
\aliascntresetthe{prop}
\aliascntresetthe{lem} 
%

\theoremstyle{definition}
\newaliascnt{defn}{thm}
\newaliascnt{asu}{thm}
\newaliascnt{con}{thm}
\aliascntresetthe{defn}
\aliascntresetthe{asu}
\aliascntresetthe{con}
%

\newcounter{stp}
\newcounter{stpi}
\newcounter{stpci}
\newcounter{stpiii}

 \setcounter{stp}{0}
 \setcounter{stpiii}{0}
\newtheorem{step}[stp]{Step}

\theoremstyle{thm}
\newaliascnt{rem}{thm}
\newaliascnt{exa}{thm}
\newaliascnt{masu}{thm}
\newaliascnt{nota}{thm}
\newaliascnt{sett}{thm}
\newtheorem{rem}[rem]{Remark}

\aliascntresetthe{rem}
\aliascntresetthe{exa}
\aliascntresetthe{masu}
\aliascntresetthe{nota}
\aliascntresetthe{sett}
%


%


\setcounter{tocdepth}{2}
\numberwithin{equation}{section}

\parindent=0pt
\labelindent=10pt

\setlist[enumerate]{font = \normalfont}




\newcommand {\N}	{\mathrm{N}}

\newcommand {\R}	{\mathbb{R}}
\newcommand {\C}	{\mathbb{C}}
\newcommand {\E}	{\mathbb{E}}
\newcommand {\F}	{\mathbb{F}}

\newcommand {\T}	{\mathbb{T}}


\renewcommand{\d}{\, \mathrm{d}}

\DeclareMathOperator{\divH}{div_{\H}}






\newcommand{\D}{\mathrm{D}}
\newcommand{\rN}{\mathrm{N}}
\renewcommand{\H}{\mathrm{H}}

\newcommand{\per}{\mathrm{per}}

\newcommand{\sigmabar}{\bar{\sigma}}





	\newcommand{\dk}[1]{\partial_{#1}}
	\newcommand{\dt}{\dk{t}} 
	\newcommand{\dz}{\dk{z}} 
	



	\newcommand{\eps}{\varepsilon}
	\renewcommand{\phi}{\varphi}
	


	\renewcommand{\bar}[1]{\overline{#1}}
	\newcommand{\vbar}{\bar{v}}


	\renewcommand{\div}{\mathrm{div} \, }
	\newcommand{\nablaH}{\nabla_{\H}}
	\newcommand{\DeltaH}{\Delta_{\H}}


	\newcommand{\rC}{\mathrm{C}}
	\newcommand{\rL}{\mathrm{L}}
	\newcommand{\rW}{\mathrm{W}}
	\newcommand{\rH}{\H}
	\newcommand{\rB}{\mathrm{B}}

	\newcommand{\rLq}{\rL^q}
	\newcommand{\rLp}{\rL^p}























	\newcommand{\rX}{\mathrm X}


	\newcommand{\etz}{\color{black}}

\parindent=10pt

\title[Dynamic boundary conditions with noise for an EBM coupled to geophysical flows]{Dynamic boundary conditions with noise for an energy balance model coupled to geophysical flows
  }

\author{Gianmarco Del Sarto}
\address{Technische Universit\"{a}t Darmstadt\\
Fachbereich Mathematik\\
	Schlossgartenstr.\ 7\\
	64289 Darmstadt\\
	Germany}
\email{delsarto@mathematik.tu-darmstadt.de}

\author{Matthias Hieber}
\address{Technische Universit\"at Darmstadt\\
	Fachbereich Mathematik\\
	Schlossgartenstr.\ 7\\
	64289 Darmstadt\\
	Germany}
\email{hieber@mathematik.tu-darmstadt.de}

\author{Tarek Z\"{o}chling}
\address{Technische Universit\"at Darmstadt\\
	Fachbereich Mathematik\\
	Schlossgartenstr.\ 7\\
	64289 Darmstadt\\
	Germany}
\email{zoechling@mathematik.tu-darmstadt.de}

\begin{document}


\subjclass[2020]{35Q86, 35Q35, 60H15, 76D03, 35K55}
\keywords{Energy balance models, primitive equations, dynamic non-linear boundary conditions, strong global well-posedness, additive noise}
\thanks{All three  authors acknowledge the support by the DFG project FOR~5528 and thank Franco Flandoli for his insightful discussions on the topic.}

\begin{abstract}
  This article investigates a Sellers-type energy balance model coupled to the primitive equations by a dynamic boundary condition with and without noise on the boundary.
  It is shown that this system is globally strongly well-posed both in the deterministic setting for arbitrary large data in  $\rW^{2(1-\nicefrac{1}{p}),p}$
  for $p \in [2,\infty)$ and in the stochastic setting for arbitrary large data in $\rH^1$. 
\end{abstract}

\maketitle

\begin{center}
   Fachbereich Mathematik, Technische Universit\"{a}t Darmstadt 
\end{center}

\section{Introduction}
\noindent
Consider a two-dimensional energy balance model coupled with the basic equations of geophysical flows, namely, the primitive equations of ocean dynamics. This system integrates an advection-diffusion model for the ocean's interior temperature (subject to a dynamic boundary condition) with the three-dimensional primitive equations. We study two settings: the deterministic case and the stochastic case, in which the stochastic force is modelled by a cylindrical Wiener process \((W_t)_t\) on the boundary. Both settings are of great interest due to the presence of dynamic boundary conditions and the fact that we deal with unique, global, strong solutions.

More specifically, we investigate global, strong well-posedness results for this system, described in the deterministic setting by the set of equations
\begin{equation}
\left\{
\begin{aligned}
\eps \partial_t v^\varepsilon + u^\varepsilon  \cdot \nabla  v ^\varepsilon - \Delta v^\varepsilon  + \nablaH p^\varepsilon  &= 0,  &\quad \text{ in } &\mathcal{O} \times (0,\tau),\\
\dz p^\varepsilon  &= -T,  &\quad \text{ in } &\mathcal{O} \times (0,\tau)\\
\div u ^\varepsilon  &=0, &\quad \text{ in } &\mathcal{O} \times (0,\tau),\\
\partial_t T + u ^\varepsilon  \cdot \nabla T -\Delta T&= 0, \quad &\text{ in }& \mathcal{O} \times (0,\tau), \\
 T|_{\Gamma_u} &= \rho, \quad &\text{ in }&G \times (0,\tau),\\
\partial_t \rho + v ^\varepsilon |_{\Gamma_u} \cdot \nablaH  \rho- \DeltaH \rho + (\dz  T)|_{ \Gamma_u}  &= R(x,\rho), \quad &\text{ in }&G \times (0,\tau),\\
 v^\varepsilon (0)& = v_0, \quad T(0)= T_0.
\end{aligned}
\right.
\label{eq: full model in introduction}
\end{equation}
Here, \(\mathcal{O} = G \times (0,1)\) denotes the spatial domain, where \(G =(0,1)^2\) represents the horizontal domain, and where \(\Gamma_u = G \times \{1\}\) as well as
\(\Gamma_b = G \times \{0\}\) denote the
upper (surface) or lower (bottom) boundary of $\mathcal{O}$, respectively. Furthermore, $u^\varepsilon  = (v^\varepsilon , w^\varepsilon ) : \mathcal{O} \rightarrow \R^3$ denotes the velocity of the ocean and $p^\varepsilon  : \mathcal{O} \rightarrow \R$ denotes ocean's pressure on a climate, slow time-scale (several years), where $\varepsilon \sim \nicefrac{\tau_{\text{fast}}}{\tau_{\text{slow}}}$ characterises the separation between the fast (weather) and slow (climate) time-scales; see \autoref{sec: model formulation} for more details. Moreover,
$T :\mathcal{O} \to \R$ denotes the temperature of the ocean and $\rho =T|_{\Gamma_u}:G \to \R$ the temperature evaluated at the surface. The operators $\DeltaH$ and  $\nablaH$ denote the horizontal Laplacian and
the horizontal gradient, that is $\DeltaH := \partial^2_x + \partial^2_y$ and $\nablaH:=(\partial_x,\partial_y)^T$. Moreover, \(R(x,\rho)\) is a reaction term modelling the net radiative balance at the
surface described in detail below.

In the stochastic setting, the force is given through a cylindrical Wiener process  $(W_t)_t$, acting via a given process $H_\rho$ on the temperature on the boundary. In this case, the system reads
\begin{equation}
\left\{
\begin{aligned}
\eps \d v^\varepsilon + u^\varepsilon \cdot \nabla v^\varepsilon \d t - \Delta v^\varepsilon \d t + \nablaH p ^\varepsilon \d t  &= 0 ,  &\quad \text{ in } &\mathcal{O} \times (0,\tau),\\
\dz p^\varepsilon &= -T,  &\quad \text{ in } &\mathcal{O} \times (0,\tau)\\
\div u ^\varepsilon&=0, &\quad \text{ in } &G \times (0,\tau),\\
\d  T + u ^\varepsilon \cdot \nabla T \d t-\Delta T \d t&= 0, \quad &\text{ in }& \mathcal{O} \times (0,\tau), \\
 T|_{\Gamma_u} &= \rho, \quad &\text{ in }&G \times (0,\tau),\\
\d \rho + \vbar ^\varepsilon \cdot \nablaH  \rho \d t- \DeltaH \rho \d t+ (\dz  T)|_{ \Gamma_u} \d t &= R(x,\rho) \d t + H_\rho \d W, \quad &\text{ in }&G \times (0,\tau),\\
 v^\varepsilon(0)& = v_0, \quad T(0)= T_0,
\end{aligned}
\right.
\label{eq: primitive + EBM simplified stohcastic}
\end{equation}
Note that here the transport term on the boundary is replaced by the transport along the average of the velocity given by $\vbar^\eps = \int_0^1 v^\eps(\cdot,\xi) \d \xi$.

Both systems \eqref{eq: full model in introduction} as well as \eqref{eq: primitive + EBM simplified stohcastic} are supplemented by the boundary conditions
\begin{equation}\label{eq:bc}
\begin{aligned} 
    (\dz v^\varepsilon)|_{\Gamma_u \cup \Gamma_b} = w^\varepsilon|_{\Gamma_u \cup \Gamma_b} = 0, \quad (\dz T)|_{\Gamma_b}=0 \ \text{ and } \ u^\varepsilon,p^\varepsilon,T,\rho \quad \text{ are periodic on }\ \partial G \times [0,1] .
\end{aligned} 
\end{equation}

Note that the domain $G = (0,1)^2$, when endowed with periodic boundary conditions, can be naturally identified with the torus $\T^2$.

Energy balance model serves often as a simplified climate model. For more information on deterministic energy balance models and their applications,  we refer
e.g., to \cite{Ghil1976,Ashwin2020, Cannarsa23, DelSarto24a}.
The first pioneering work on stochastic climate models goes back to K. Hasselmann in \cite{Hasselmann1976}, where noise arises as a result of the interaction of the many
components constituting the climate system, in particular as the effect of the faster component on the slower ones; see \cite{Imkeller2001} for a formulation of Hasselmann's programme
in modern mathematical language. For further information on stochastic energy balance models, we refer to \cite{Fraedrich2001,Diaz2022, DelSarto24}. To account for contributions of both slow and fast processes in the noise term, we may consider $
W = (W^1,W^2), $
where \(W^1\) and \(W^2\) are independent cylindrical Wiener processes, and set $
H_\rho = (H_\rho^1, \sqrt{\varepsilon} H_\rho^2).$ Here, \(H_\rho^1\) captures unresolved variability arising from slow processes, while \(H_\rho^2\), scaled by \(\sqrt{\varepsilon}\), represents the cumulative effect of fast (weather-scale) processes on the slow climate dynamics, according with standard multiscale stochastic modeling approaches; see \cite{DelSarto24a}. 
For simplicity of notation and exposition, in the remainder of this work we avoid explicitly considering the two-component formulation of \(H_\rho\) and treat a single effective noise term.

A central aspect in our system is the  analysis of  the {\em dynamic  boundary condition} for the surface temperature, given by \(T|_{\Gamma_u} = \rho\). Dynamic boundary conditions have, of course, been previously studied, both in the weak and strong sense.  We refer here e.\ g., to the work of Denk, Pr\"uss and Zacher \cite{Denk2008}, who considered dynamic boundary conditions
for diffusion, surface diffusion, the linearised Stefan problem, and linearised free boundary value problems for the Navier-Stokes equations. Furthermore, Pr\"uss, Racke, and Zheng
\cite{Pruss2006} obtained a strong well-posedness result for the Cahn-Hillard equation with dynamic boundary condition.   

Let us also mention that D\'iaz and Tello  \cite{Diaz07} proved the existence of a weak solution to certain energy balance models, where the fluid is {\em passive}, in the sense that the velocity $u$ of the
fluid is prescribed and not part of the problem. 

There is a large literature on stochastic  fluid systems or stochastic heat systems with noise or  with noise on the boundary. We refer here for example to the work of
Cerrai and Freidlin \cite{Cerrai2011} or Debussche, Fuhrmann and Tessitore \cite{Debussche2007}, Brzezniak, Goldys, Peszat and Russo \cite{Russo2015}, Binz, Hieber, Hussein and Saal \cite{Binz2024}. Regarding the primitive equations with additive or multiplicative noise, we refer to \cite{Guo2009} and \cite{Deubussche2012,Vicol2014}.
For further results focusing on linear boundary conditions and solutions with $\rL^2(\rH^1)$ regularity, we refer to  \cite{Bonaccorsi2006,Wang2009}.
Note that in contrast to the above cited  studies, we consider \emph{dynamic boundary conditions in the strong sense subject to noise}.

The aim of this article is threefold: firstly, we couple the energy balance model with an \emph{active} fluid, meaning that the ocean's velocity is not a given function but governed by the
coupling with the incompressible primitive equations. More precisely, the coupling is given by a dynamic boundary condition for the temperature on the boundary subject to a  transport term on the
upper boundary \(\Gamma_u\) of the form  $v\vert_{\Gamma_u} \cdot \nablaH \rho$. Here, \(v\) denotes the horizontal component of the (unknown) velocity \(u = (v,w)\).

Secondly, we aim to prove the  existence of a {\em unique, global, strong } solution to the system \eqref{eq: full model in introduction} for {\em arbitrary large} initial data belonging to
$\rW_\per^{2(1-\nicefrac{1}{p}),p} \times \rW_\per^{2(1-\nicefrac{1}{p}),p}$ for $p \in [2,\infty)\setminus \{3\}$, satisfying the  horizontal solenoidal divergence-free condition. In the special case of $p=2$, we thus
obtain global strong well-posedness of solutions for arbitrary large data in $\rH^1 \times \rH^1$.    
This is rather surprising, since we would be unable to do so when the  geophysical fluid would be replaced, for example, by the Navier-Stokes equations.

Thirdly, concerning  equations \eqref{eq: full model in introduction} and \eqref{eq: primitive + EBM simplified stohcastic}, let us recall that we are dealing with
{\em dynamic boundary conditions in the strong sense subject to noise}. It seems that, independent of the system under consideration, there are no results known concerning strong solutions
for noise on the boundary subject to dynamic boundary conditions. Our final aim is to prove the existence of a unique, global, strong solution 
in the pathwise sense with regularity $\rC(\rH^1) \cap \rL^2(\rH^2)$  for arbitrary large data in $\rH^1 \times \rH^1$.

At this point, some words about our approach are in order. We will comment on the physics behind our model in \autoref{sec:physics} and \autoref{sec: model formulation}. In order to establish global, strong  well-posedness results for the system \eqref{eq: full model in introduction} and \eqref{eq: primitive + EBM simplified stohcastic}, we associate with the temperature equations the differential
operator $\mathcal{A}(D,D_\H)$, defined by
\begin{equation*}
    \mathcal{A}(D,D_\H)(T,\rho) :=  (\Delta T, -\gamma \partial_z T + \DeltaH \rho ),
\end{equation*}
subject to the boundary condition $(\dz T)|_{\Gamma_b}=0$. Here, $\gamma$ denotes the trace operator and $\DeltaH$ the two-dimensional Laplace operator with periodic boundary conditions.
For $p \in (1,\infty)$, consider the $\rLp$-realization of $\mathcal{A}$ in  $\rX^{(T,\rho)}_0 := \rLp(\mathcal{O}) \times \rLp(G)$, defined by
\begin{equation}
    \begin{aligned}\label{eq: A1}
        A(T,\rho) = \mathcal{A}(D,D_\H)(T,\rho) \mbox{ with }
        \D(A) = \bigr  \{ (T,\rho) \in \rH_\per^{2,p}(\mathcal{O}) \times \rH_\per^{2,p}(G) \ \colon T|_{\Gamma_u}= \rho, \  (\dz T)|_{\Gamma_b} =0 \bigl \}.
    \end{aligned}
\end{equation}

We will then show in \autoref{sec: lin and loc} that the operator $A$ admits a bounded $\mathcal{H}^\infty$-calculus on $\rLp(\mathcal{O}) \times \rLp(G)$ with angle $\Phi_A<\pi/2$. 
That is, $A$ is sectorial with spectral angle $\varphi_A<\Phi_A$, and there exists $\Phi_A\in(\varphi_A,\pi/2)$ such that the following holds. 
Let $
   \Sigma_{\Phi_A} := \{ \lambda\in\C\setminus\{0\} : |\arg\lambda|<\Phi_A\}$, $\rho(\lambda):=\frac{\lambda}{(1+\lambda)^2},
$ and
$
   \mathcal{H}^\infty_0(\Sigma_{\Phi_A})
   :=\{ f\in\mathcal{H}^\infty(\Sigma_{\Phi_A}) : 
      \exists\,C,\varepsilon>0  \text{ s.t. } |f(\lambda)|\le C\,|\rho(\lambda)|^\varepsilon,\;\lambda\in\Sigma_{\Phi_A} \}.
$
Then $A$ has a bounded $\mathcal{H}^\infty$-calculus of angle $\Phi_A$ if
$
   \|f(A)\|_{\mathcal{L}(\rLp(\mathcal{O})\times\rLp(G))}
   \le C_{\Phi_A} \sup_{z\in\Sigma_{\Phi_A}}|f(z)|$ for all $f\in\mathcal{H}^\infty_0(\Sigma_{\Phi_A}).
$
This property is crucial because in the case where $\Phi_A \le \pi/2$ it implies maximal $\rL^p$-regularity for $A$, a key tool in our analysis.

This yields, as a first step, the existence of a unique, local, strong solution to system  \eqref{eq: primitive + EBM simplified}. 
Energy estimates are now crucial for extending this solution to a global one for arbitrary large data. Note that the presence of the non-linear boundary transport term
$ v^\eps|_{\Gamma_u} \cdot \nabla_H \rho$ naturally leads to the appearance of the term $ \int_{G} v^\eps|_{\Gamma_u} \cdot \nabla_H \rho \cdot \rho$ in the classical energy estimate used for
the global well-posedness result. This term, however,  does not vanish, as $v^\eps|_{\Gamma_u}$ no longer satisfies the (horizontal) divergence-free condition.
The control of this term thus crucially depends on obtaining a uniform $\rL^\infty$ bound for $\rho$. We derive such a bound by a suitable  maximum principle, hereby
  adapting a result of Ewald and Temam \cite{Temam2001} to our setting. This  enables us to
close the energy estimates and to prove global well-posedness for arbitrary large data belonging to $\rW_\per^{2(1-\nicefrac{1}{p}),p} \times \rW_\per^{2(1-\nicefrac{1}{p}),p}$, for $p \in [2,\infty) \setminus \{3\}$.

To investigate the effect of noise in the temperature equation on the boundary, we treat the fluid and temperature equations separately, and first consider the temperature as a
stochastic evolution equation. 
The boundedness of the  $\mathcal{H}^\infty$-calculus of $A$ on $\rLp(\mathcal{O}) \times \rLp(G)$ has now another important consequence for the noise on  the boundary.  
In fact, the linearised problem enjoys stochastic maximal regularity, see \cite{Veraar2012a, Agresti2022a, Agresti2022b}. 
To handle the nonlinear problem, we employ the Da Prato-Debussche trick (\cite{DPD02, DPD03}). The central idea is to split the solution into two components: a rough part that captures the full irregularity of the boundary noise via the stochastic convolution of the linearised system, and a smoother remainder that satisfies a deterministic evolution equation with random coefficients. This transformation has two advantages: first, it isolates the irregular behaviour of the noise in an explicitly controlled component; second, it reduces the original stochastic PDE with nonlinearities into a random PDE where classical energy estimates can be applied. In this way, the nonlinear terms act only on the more regular remainder, and we can establish the existence of unique, global, strong solutions in the pathwise sense.

Regarding open problems, let us note that our assumption of spatial periodicity for the horizontal domain $G$ is a simplification. The most realistic domain would require more complex geometries (e.\ g., spherical caps or annuli) and considerably more technical effort. The present work should thus be viewed as a first analytical step in the simplified periodic setting of the torus $\T^2$, providing a foundation for future studies on more realistic domains.

This article is organized as follows. In \autoref{sec: model formulation}, we present the formulation of our 2D-EBM coupled to an active 3D ocean, described by the primitive equations.
In \autoref{sec: prelim + main}, we provide an equivalent formulation of the model along with the fundamental elements of our functional setting. We also  state there our main results regarding
the existence and uniqueness of a global strong solution as well as  additional regularity properties for the deterministic setting. In \autoref{sec: lin and loc}, we prove the local well-posedness by establishing a
bounded $\mathcal{H}^\infty$-calculus for the operator associated with  the temperature and its  dynamic boundary condition. Next, in \autoref{sec: global}, we establish the
global well-posedness of the strong solution to \eqref{eq: full model in introduction}.  Our  proof is based on a maximum principle for the ocean's temperature
(obtained under the additional hypothesis of an \(\rL^\infty\)- bound on the initial temperature), the maximal $L^p$-regularity estimate for the linearized system and energy estimates. Finally, in \autoref{sec: stoch}, we provide a proof of the global existence result concerning the stochastic system \eqref{eq: primitive + EBM simplified stohcastic}.

\section{Description behind existing energy balance models}\label{sec:physics}

Modelling the Earth's climate system is, naturally, a highly challenging task. Scientists therefore rely on a hierarchy of climate models, ranging from elementary elementary averaged models that elegantly describe the main drivers of Earth's climate to fully coupled general circulation models that incorporate fluid dynamics as well as
chemical and biological processes. Among these, energy balance models , though not coupled to a fluid, occupy a fundamental role. These were were introduced independently by Budyko and Sellers in 1969 \cite{Budyko1969, Sellers1969}.
They assume that the Earth's surface temperature \( T \) evolves according to the balance between the radiation absorbed by the planet and the radiation it emits. Ultimately,
while EBMs simplify many of the intricate processes governing climate, they remain indispensable for their clarity, analytical tractability, and capacity to reveal the
underlying structure of Earth's climate dynamics. For more information we refer to  the articles \cite{Ashwin2020, Cannarsa23, DelSarto24a, DelSarto24}.

Let us now describe various energy balance models in more detail. We denote by  \(\Gamma_u \subset \mathbb{R}^2\) an open set representing a simplified description of Earth's surface.
A two-dimensional energy balance model (2D-EBM) is given by the following equation
$$
c(x) \, \partial_t \rho = \operatorname{div}\bigl( \kappa(x) \nabla \rho \bigr) + R_a(x,\rho) - R_e(x,\rho), \quad \text{in } \Gamma_u \times (0,\tau),
$$
where \(\rho(x,t)\) denotes the Earth's surface temperature at the point \(x \in \mathcal{O}\) and time \(t \in (0,\tau)\), \(c(x)\) represents the heat capacity, and
\(\operatorname{div}\bigl( \kappa(x) \nabla \rho \bigr)\) models the heat diffusion in a simplified way. For a more detailed discussion on this topic, see \cite{North17}.
The functions \(R_a\) and \(R_e\) denote the radiation absorbed and emitted by the planet, respectively. The parametrizations of these terms vary slightly among different authors;
we discuss the details in \autoref{sec: model formulation}. A one-dimensional energy balance model (1D-EBM) is obtained by averaging the 2D-EBM along the longitude direction,
resulting in a temperature \(\overline{\rho}\) that depends only on the latitude, see  \cite{Ghil1976}.

In 1990, Watts and Morantine (\cite{Watts1990}) proposed a two-dimensional (latitude-depth) upwelling-diffusion ocean model coupled with a 1D-EBM. Their main goal was to study the
interaction between Earth's surface and the deep ocean, particularly starting from the analysis rapid climate change recorded paleoclimatic data. Starting from the work of Watts and Morantine,
D\'iaz and Tello (\cite{Diaz07}) investigated a 2D-EBM coupled to a 3D ocean model, considering, however,  a {\em passive} field \(u\colon \mathcal{O} \to \mathbb{R}^3\) for the ocean's velocity.
The domain is simplified to $
\mathcal{O} = G \times (0,1) \subset \mathbb{R}^3,$
with \(G \subset \mathbb{R}^2\) representing the horizontal component of the ocean. Denoting by \(\Gamma_u = G \times \{1\}\) the upper boundary (i.\ e., the ocean's surface), the model
proposed by D\'iaz and Tello takes the form:
\begin{equation}
\left\{
\begin{aligned}
\partial_t T + u \cdot \nabla T - \Delta T &= 0,  &\quad \text{in } &\mathcal{O} \times (0,\tau),\\[1mm]
T\vert_{\Gamma_u} &= \rho,  &\quad \text{in } &G \times (0,\tau),\\[1mm]
\partial_t \rho - \divH\bigl( \kappa(x) \nablaH \rho \bigr) + \bigl( \partial_z T \bigr)\vert_{G \times \{0\}} + F(x, \nablaH \rho) &= R(x, \rho),  &\quad \text{in } &G \times (0,\tau),\\[1mm]
\overline{T}(0) &= \overline{T}_0,
\end{aligned}
\right.
\label{eq: model Diaz and Tello}
\end{equation}
where \(F\colon \Gamma_u \times \mathbb{R}^2 \to \mathbb{R}\) represents, in a simplified manner, the transport along the boundary.
The main improvements introduced by D\'iaz and Tello compared to previous models are: the inclusion of an additional spatial dimension in the EBM on the boundary and to the ocean's
interior temperature, the ocean model now incorporates not only upwelling (as in Watts and Morantine's model) but also horizontal transport via the term \(u \cdot \nabla T\), 
and the addition of a simplified boundary transport term \(F(x, \nablaH \rho)\).

\section{The new model: coupling to a geophysical flow}\label{sec: model formulation}

In this section, we describe the coupling of the energy balance model to the primitive equations, which are a basic model in geophysical fluid dynamics for oceanic and atmospheric dynamics.
We assume that the velocity of the fluid is governed by the primitive equations. These are derived from the 
Navier-Stokes equations under the assumption of hydrostatic balance for the pressure. We refer to works of Majda \cite{Majda2003} and Vallis \cite{Vallis2017} for details.

The primitive equations  have been introduced in a series of
papers  by Lions, Temam and Wang ~\cite{LTW:92a, LTW:92b, LTW:95}. In their pioneering work, they proved the existence of a weak solution to the
primitive equations; its uniqueness remains an open problem until today. Since then, both the compressible and incompressible primitive equations have been the subject of
intensive mathematical investigations. A celebrated result of Cao and Titi \cite{CT:07} shows that the incompressible primitive equations, subject to Neumann boundary conditions, are
globally strongly well-posed in the three-dimensional setting  for arbitrarily large data in $\rH^1$. For different approaches and results, including those in
critical spaces, we refer e.\ g., to \cite{KZ:07a,HK:16,GGHHK:20b}.  

Consider a cylindrical domain $ \mathcal{O} = G \times (0,1),$ with $G = (0,1)^2$. This domain models the ocean, with the upper boundary \(\Gamma_u = G \times \{1\}\) representing the ocean's surface. The surface temperature, denoted by $
\rho = T\vert_{G \times \{1\}},$
is governed by the two-dimensional energy balance model (2D-EBM)
\begin{equation}\label{eq:EBM}
\partial_t \rho - \DeltaH \rho + v\vert_{\Gamma_u} \cdot \nablaH \rho + (\partial_z T)\vert_{\Gamma_u} = R(x,\rho), \quad \text{in } G \times (0,\tau).
\end{equation}
In the above, all physical constants (such as the heat capacity and diffusion constants) have been normalized to one. In \eqref{eq:EBM}, the term \(\DeltaH \rho\) models the horizontal diffusion of temperature, \(v\vert_{\Gamma_u}\cdot \nablaH \rho\) captures the horizontal advection at the surface, and \((\partial_z T)\vert_{\Gamma_u}\) represents the vertical heat flux from the ocean interior to the surface. The reaction term is given by
\begin{equation}\label{eq: radiation}
R(x, \rho) = Q(x)\,\beta(\rho) - |\rho|^3 \rho, \quad x \in \Gamma_u,\; \rho \in \mathbb{R},
\end{equation}
where the solar radiation \(Q \in C^1_b(G)\) is positive. The outgoing radiation is modelled via the Stefan-Boltzmann law. The co-albedo function $\beta$ is parametrized as
\begin{equation}\label{eq: coalbedo}
\beta(\rho) = \beta_1 + (\beta_2 - \beta_1)\, \frac{1+\tanh(\rho-\rho_{\mathrm{ref}})}{2},
\end{equation}
with \(0<\beta_1<\beta_2\) corresponding to the co-albedo values for ice-covered and ice-free conditions, respectively. The parameter \(\rho_{\mathrm{ref}}\) denotes a critical reference temperature, usually chosen as \(\rho_{\mathrm{ref}} = - 263 K\), at which ice turns white (see \cite{Diaz2022}). We remark that our EBM is of Sellers-type, i.e. with Lipschitz continuous co-albedo. The other class of EBM the Budyko type, characterised discontinuous co-albedo. For the latter, it is known that even for the EBM alon, uniqueness of the solution does not hod; see, for instance, \cite{Diaz97} for a discussion of the non-uniquenss of weak solutions.

The temperature \(T\colon \mathcal{O} \to \mathbb{R}\) in interior of the ocean is coupled to the surface model through the dynamic boundary condition at \(\Gamma_u\). We assume that the temperature $T$ satisfies a Heat equation and is transported along the full velocity field and the surface energy balance \eqref{eq:EBM} acts as a dynamic boundary condition for the temperature \(T\) at \(\Gamma_u\). Hence we arrive at the system 
\begin{equation}
\left\{
\begin{aligned}
\partial_t T + u \cdot \nabla T -\Delta T&= 0, \quad &\text{ in }& \mathcal{O} \times (0,\tau), \\
 T|_{\Gamma_u} &= \rho, \quad &\text{ in }&G \times (0,\tau),\\
\partial_t \rho + v|_{\Gamma_u} \cdot \nablaH  \rho- \DeltaH \rho + (\dz  T)|_{ \Gamma_u}  &= R(x,\rho), \quad &\text{ in }&G \times (0,\tau),\\
{T}(0) &= {T}_0.
\end{aligned}
\right.
\label{eq: model Diaz and Tello1}
\end{equation}

Our aim is to study an \emph{active} fluid, meaning that the ocean velocity $u$ is not prescribed but instead determined by the \emph{incompressible primitive equations}, which are governed by the system
\begin{equation}\label{eq:primitive}
\left\{
\begin{aligned}
\partial_t v + u \cdot \nabla v - \Delta v + \nablaH p &= 0,  &\quad \text{in } \mathcal{O} \times (0,\tau),\\
\partial_z p &= -T,  &\quad \text{in } \mathcal{O} \times (0,\tau),\\
\div u &= 0,  &\quad \text{in } \mathcal{O} \times (0,\tau),\\
v(0) &= v_0. 
\end{aligned}
\right.
\end{equation}
Here $u = (v,w)$ where $v \colon \mathcal{O}\to \R^2$ denotes the horizontal part of the velocity field and $w\colon \mathcal{O}\to \R$ denotes its vertical part. Moreover, $p \colon \mathcal{O}\to \R$ denotes the pressure of the ocean. We emphasize that the primitive equations of the ocean \eqref{eq:primitive} 
describe the \emph{fast} fluid motions, whereas the temperature model 
\eqref{eq: model Diaz and Tello1} evolves on a \emph{slow} time scale.

To couple equations \eqref{eq: model Diaz and Tello1} and \eqref{eq:primitive} togheter, it is natural to separate the 
characteristic time scales and apply a nondimensionalization. Let $\tau'$ denote the fast (weather/seasonal) time, and introduce the slow time variable
\[
   \tau = \varepsilon \tau', 
   \qquad 0 < \varepsilon \ll 1,
\]
where $   \varepsilon \sim \nicefrac{\tau_{\text{fast}}}{\tau_{\text{slow}}}
   \ \text{(e.\ g., days--months over years--decades).}$
  We then define the rescaled velocity and pressure fields of the ocean by
\[
   u^\varepsilon(t',x,y,z) := u(\varepsilon t,x,y,z), 
   \qquad 
   p^\varepsilon(t',x,y,z) := p(\varepsilon t,x,y,z),
\]
and consider the combined systems, consisting of the rescaled primitive equations and the climate model \eqref{eq: model Diaz and Tello1}, given by the equations \eqref{eq: full model in introduction} for the deterministic model, and equations \eqref{eq: primitive + EBM simplified stohcastic} for the stochastic model, where the transport term is replaced by the transport along the averaged velocity field.

Summarizing, the system \eqref{eq: full model in introduction} combines a two-dimensional reaction-diffusion model on the surface with a three-dimensional fluid dynamic model for the ocean interior.
The 2D-EBM captures the primary mechanisms of radiative balance through the parametrization in \eqref{eq: radiation}--\eqref{eq: coalbedo}, while the primitive equations account for the
full dynamical behaviour of the ocean. In particular, the advective term \(v^\eps \vert_{\Gamma_u} \cdot \nablaH \rho\) emphasizes the role of horizontal transport in redistributing heat along the surface.

By treating the 2D-EBM as a dynamic boundary condition, the model naturally couples the surface radiative processes with the interior dynamics. This coupling is fundamental in
understanding how surface phenomena (such as albedo feedbacks) influence the overall oceanic temperature distribution.

Lastly, we introduce an additive noise term to our model on the dynamic boundary condition to capture the unresolved variability arising from fast weather processes
interacting with slower climate dynamics. Physically, this noise represents random fluctuations in the boundary heat flux, for example, variations in solar radiation or
atmospheric conditions, that directly affect the ocean's surface temperature. Such fluctuations are critical in accurately modelling the stochastic nature of the climate system,
as originally suggested by Hasselmann \cite{Hasselmann1976}.

From an analytical point of view, incorporating noise into the dynamic boundary condition, as done in \eqref{eq: primitive + EBM simplified stohcastic}, adds a new additional difficulty.
In the deterministic setting, analytical tools such as
the maximum principle allow us to obtain energy estimates. However, these techniques do not extend to the stochastic framework. We thus modify the transport term on the
boundary in our stochastic model by replacing the velocity on the boundary by its mean velocity. This allows us to close the energy estimates and to prove existence and uniqueness of
global, strong solutions for arbitrary large data in $\rH^1$ also for the stochastic model in the pathwise sense.

\section{Preliminaries and Main Results}\label{sec: prelim + main}
Let $\tau>0$ be fixed. In this section we present our main results regarding the unique, strong and global solvability of the deterministic equations \eqref{eq: full model in introduction} as well as 
of the stochastic model \eqref{eq: primitive + EBM simplified stohcastic}. Both systems are supplemented with the dynamic boundary conditions \eqref{eq:bc}. For simplicity of the notation we denote in the following the rescaled oceanic velocity $u^\eps$ and the rescaled pressure $p^\eps$ by $u$ and $p$ respectively. Moreover, as it does not affect the analysis, we set the scaling constant $\eps \equiv 1$. 

\subsection{The deterministic model}  \mbox{}\\
Consider the system \eqref{eq: full model in introduction} supplemented with the boundary conditions \eqref{eq:bc}. The equation of state $\dz p = -T$ yields
\begin{equation*}
    p(t,x,y,z) = p_s(t,x,y) -  \int_0^z T(\cdot, \xi) \d \xi,
\end{equation*}
where $p_s$ denotes the surface pressure, i.\ e., $p_s(t,x,y)=p(t,x,y,z=0)$. Moreover, the vertical velocity is determined by the divergence-free condition, namely, 
\begin{equation*}
    w(t,x,y,z) = -\int_0^z \divH v(t,x,y,\xi) \d \xi.
\end{equation*}
Using this, as well as the boundary conditions for $w$, we rewrite \eqref{eq: full model in introduction} as
\begin{equation}
\left\{
\begin{aligned}
\partial_t v + v \cdot \nablaH  v+ w(v)\cdot \dz v - \Delta v + \nablaH p_s &= \nablaH \int_0^z T(\cdot,\xi) \d \xi,  &\quad \text{ in } &\mathcal{O} \times (0,\tau),\\
\divH \vbar &=0, &\quad \text{ in } &G \times (0,\tau),\\
\partial_t T + u \cdot \nabla T -\Delta T&= 0, \quad &\text{ in }& \mathcal{O} \times (0,\tau), \\
 T|_{\Gamma_u} &= \rho, \quad &\text{ in }&G \times (0,\tau),\\
\partial_t \rho + v|_{\Gamma_u} \cdot \nablaH  \rho- \DeltaH \rho + (\dz  T)|_{ \Gamma_u}  &= R(x,\rho), \quad &\text{ in }&G \times (0,\tau),\\
 v(0)& = v_0, \quad T(0)= T_0,
\end{aligned}
\right.
\label{eq: primitive + EBM simplified}
\end{equation}
supplemented by the boundary conditions \eqref{eq:bc}.

\subsection{The stochastic model} \mbox{}\\
Consider the system \eqref{eq: primitive + EBM simplified stohcastic} supplemented with the boundary conditions \eqref{eq:bc}.
Let $(\Omega, \mathcal{F},P)$ be a probability space with a filtration $\mathcal{F}=(\mathcal{F}_t)_t$. An 
$\mathcal{F}$-cylindrical Brownian motion on a Hilbert space $\mathcal{U}$ is a bounded linear operator $W: \rL^2((0,\infty);\mathcal{U})
\to \rL^2(\Omega)$ such that for all $f,g\in \mathcal{U}, t'\geq t \geq 0$:
\begin{enumerate}
\item[(i)] The random variable $W(t)f:=\mathcal{W}(\mathds{1}_{[0,t]}\otimes f)$ is centred Gaussian and $\mathcal{F}_t$-measurable.
\item[(ii)] $\E[W(t')f\cdot W(t)g]= t \left< f,g \right>_{\mathcal{U}}$.
\item[(iii)] The random variable $W(t')f-W(t)f$ is independent of $\mathcal{F}_t$.
\end{enumerate}
If $\mathcal{U}$ is separable and $(e_n)_n$ an orthonormal basis of $\mathcal{U}$, then $\beta_n(t):=W(t)e_n$ is a standard $\mathcal{F}$-Brownian motion, and we have the representation 
\begin{align*}
W(t)f = \sum_{n=1}^\infty \beta_n(t) \left< f,e_n \right>_{\mathcal{U}}.
\end{align*}
Hence, $W(t): \mathcal{U} \to L^2(\Omega)$, $W(t)= \sum_{n=1}^\infty \beta_n(t) \left< \cdot ,e_n \right>_{\mathcal{U}}$ defines a family of linear operators.

To deal with the noise in the temperature equation we treat the fluid equation and the temperature equation separately and rewrite the equations concerning the temperature as a stochastic evolution equation of the form
\begin{equation}
    \label{eq: temp stoch}
    \d \begin{pmatrix}
        T \\ \rho
    \end{pmatrix} -A\begin{pmatrix}
        T \\ \rho
    \end{pmatrix}\d t = \begin{pmatrix} -u \cdot \nabla T \\ -\vbar \cdot \nablaH \rho + R(x,\rho)        
    \end{pmatrix} \d t + \begin{pmatrix}
        0 \\ H_\rho
    \end{pmatrix} \d W,
\end{equation}
where the operator $A$ is introduced in \eqref{eq: A1}. Secondly, we investigate the linearized system 
\begin{equation}\label{eq: stoch eq}
    \d \begin{pmatrix}
        Z_T \\ Z_\rho
    \end{pmatrix} -A \begin{pmatrix}
        Z_T \\ Z_\rho
    \end{pmatrix} \d t = \begin{pmatrix}
        0 \\ H_\rho
    \end{pmatrix} \d W,
\end{equation}
by means of stochastic maximal regularity due to van Neerven, Veraar and Weiss \cite{Veraar2012a}. This is possible since $-A+ \omega$ admits a bounded $\mathcal{H}^\infty$-calculus on $\rX_0 = \rLp(\mathcal{O}) \times \rLp(G)$
of angle strictly less than $\pi/2$ for $\omega>0$ sufficiently large. This will be established in \autoref{sec: lin and loc}. Defining the remainder term $ (\tilde{T},\tilde{\rho})^\top := (T,\rho)^\top - (0,Z_\rho)^\top$ we see that $(\tilde{T},\tilde{\rho})$ satisfies the equations
\begin{equation}
    \label{eq: deterministic stoch}
    \dt \begin{pmatrix}
        \tilde{T} \\ \tilde{\rho}
    \end{pmatrix}  -A \begin{pmatrix}
         \tilde{T} \\ \tilde{\rho}
    \end{pmatrix} = \begin{pmatrix}
        -u \cdot \nabla \tilde{T} \\ -\vbar \cdot \nablaH (\tilde{\rho} + Z_\rho) + R(x,\tilde{\rho} + Z_\rho) 
    \end{pmatrix},
\end{equation}
which can be regarded as a deterministic evolution equation by the maximal regularity properties of $Z = (0,Z_\rho)$. By defining $(T,\rho)^\top = (\tilde{T},\tilde{\rho})^\top+(0,Z_\rho)^\top$, we conclude that in order to solve \eqref{eq: primitive + EBM simplified stohcastic} it is sufficient to consider the deterministic system 
\begin{equation}
\left\{
\begin{aligned}
\partial_t v + v \cdot \nablaH  v+ w(v)\cdot \dz v - \Delta v + \nablaH p_s &= \nablaH \int_0^z \tilde{T}(\cdot,\xi) \d \xi,  &\quad \text{ in } &\mathcal{O} \times (0,\tau),\\
\divH \vbar &=0, &\quad \text{ in } &G \times (0,\tau),\\
\partial_t \tilde{T} + u \cdot \nabla \tilde{T}  -\Delta \tilde{T}&= 0, \quad &\text{ in }& \mathcal{O} \times (0,\tau), \\
 \tilde{T}|_{\Gamma_u} &= \tilde{\rho}, \quad &\text{ in }&G \times (0,\tau),\\
\partial_t \tilde{\rho} +\vbar \cdot \nablaH (  \tilde{\rho}+Z_\rho)- \DeltaH \tilde{\rho} + (\dz  \tilde{T})|_{ \Gamma_u}  &= R(x,\tilde{\rho}+Z_\rho), \quad &\text{ in }&G \times (0,\tau),\\
 v(0)& = v_0, \quad \tilde{T}(0)= \tilde{T}_0,
\end{aligned}
\right.
\label{eq: primitive + EBM simplified1}
\end{equation}
subject to the boundary conditions \eqref{eq:bc}, where $Z=(0,Z_\rho)$ is the given solution of the linear stochastic evolution equation \eqref{eq: stoch eq}.

\noindent

In conclusion, our approach outlined above to solving the stochastic model relies on the application of the Da Prato-Debussche trick. Specifically, to study the existence of a unique, strong, global solution to \eqref{eq: primitive + EBM simplified stohcastic} in the pathwise sense, we proceed as follows:
(a) we consider the semilinear auxiliary stochastic evolution equation \eqref{eq: temp stoch} for temperature;
(b) we linearize \eqref{eq: temp stoch} to obtain \eqref{eq: stoch eq}, whose solution enjoys stochastic maximal regularity;
(c) we then consider, pathwise, the remainder between the solutions of \eqref{eq: temp stoch} and \eqref{eq: stoch eq}, which satisfies the deterministic, non-autonomous evolution equation \eqref{eq: primitive + EBM simplified1}. The solution to this final equation is then obtained using, in part, the techniques adopted for the deterministic model \eqref{eq: primitive + EBM simplified}.

\subsection{Functional setting}\mbox{}\\
Let us introduce several  function spaces, which are used in the sequel. Given  $s \in \R$ and $p,q \in (1,\infty)$, we
denote by $\rW^{s,q}(\mathcal{O})$ the fractional Sobolev spaces and by $\rH^{s,q}(\mathcal{O})$ the Bessel potential spaces, where $\mathcal{O} \subset \R^3$ is an open set.
As usual, we set $\rH^{0,q}(\mathcal{O}) := \rL^q(\mathcal{O})$, $\rH^s(\mathcal{O}):=\rH^{s,2}(\mathcal{O})$ and note that $\rH^{s,q}(\mathcal{O})$ coincides with the 
classical Sobolev space $\rW^{m,q}(\mathcal{O})$ whenever $s=m \in \N$. For more information on these function spaces we refer e.g. to \cite{Ama:19}. 

We  also introduce the terminology needed to describe periodic boundary conditions on $\Gamma_l = \partial G \times [0,1]$ as well as on $\partial G$. Given $s \in [0,\infty)$ and $p,q \in (1,\infty)$ we define the spaces
\begin{equation*}
    \rH^{s,q}_\per (\mathcal{O}) := \bar{\rC^\infty_\per (\bar{\mathcal{O}})}^{\| \cdot \|_{\rH^{s,q}(\mathcal{O})}} \ \text{ and } \ \rH^{s,q}_\per (G) := \bar{\rC^\infty_\per (\bar{G})}^{\| \cdot \|_{\rH^{s,q}(G)}},
\end{equation*}
where horizontal periodicity is modelled by the function spaces $\rC^\infty_\per(\bar{\mathcal{O}})$ and $\rC^\infty_\per(\bar{G})$ defined in \cite{HK:16}. Of course, we interpret $\rH^{0,q}_\per$ as $\rLq$. 
For more information regarding spaces equipped with periodic boundary conditions, see also \cite{HK:16, GGHHK:20b}. Next, we define the space of \emph{hydrostatically solenoidal vector fields} by
\begin{equation*}
    \rL^q_{\sigmabar}(\mathcal{O}) = \overline{\{v \in \rC^\infty(\overline{\mathcal{O}};\R^2) \colon \divH \vbar = 0 \}}^{\| \cdot \|_{\rL^q(\mathcal{O})}}.
\end{equation*}
Their role in the analysis of the primitive equations parallels that of the solenoidal vector fields for the Navier-Stokes equations.

\subsection{Main results}\mbox{}\\
We are now in position to formulate our main results concerning the local well-posedness as well as the global well-posedness of strong solution for arbitrary large data, both for the deterministic model \eqref{eq: primitive + EBM simplified} and for the model \eqref{eq: primitive + EBM simplified1}, associated, pathwise, to the stochastic model \eqref{eq: primitive + EBM simplified stohcastic}. Our requirements on the set of initial data are collected in the following assumption.

\vspace{.2cm}
\noindent
{\bf Assumption (A)}: Let $p \in [2,\infty) \setminus \{ 3 \}$ and assume that 
 \begin{enumerate}[(i)]
        \item $  v_0 \in \rW_\per^{2(1-\nicefrac{1}{p}),p}( \mathcal{O};\R^2)   \cap \rLp_{\sigmabar} (\mathcal{O};\R^2) \ \text{ satisfies }\ (\dz v_0)|_{\Gamma_u \cup \Gamma_b} =0 \ \text{ if } \ p>3$.  
        \item $T_0 \in \rW_\per^{2(1-\nicefrac{1}{p}),p}( \mathcal{O}) \ $ satisfies $\ T_0|_{\Gamma_u} \in \rW_\per^{2(1-\nicefrac{1}{p}),p}( G) \ $ and
          $\ (\dz T_0)|_{\Gamma_b}=0 \ \text{ if } \ p>3$.
    \end{enumerate}

\begin{thm}[Local and Global Well-Posedness of \eqref{eq: primitive + EBM simplified}]\label{thm: local wp} \mbox{} \\
Let $\tau >0$ and assume that $(v_0,T_0)$ satisfy assumption (A).\vspace{.1cm} \\ 
$\mathrm{(a)}$  There exists $0<a=a(v_0,T_0)\leq \tau$ such that the system \eqref{eq: primitive + EBM simplified} subject to the boundary conditions \eqref{eq:bc} admits a unique, strong solution $(v, T)$ satisfying 
\begin{equation*}
    \begin{aligned}
         v &\in \rH^{1,p}(0,a;\rLp_{\sigmabar}(\mathcal{O};\R^2)) \cap \rLp(0,a;\rH_{\per}^{2,p}(\mathcal{O};\R^2) \cap \rLp_{\sigmabar}(\mathcal{O};\R^2)) \ \text{ and } \\
         T &\in \rH^{1,p}(0,a;\rLp(\mathcal{O})) \cap \rLp(0,a;\rH_\per^{2,p}(\mathcal{O})) \ \text{ such that }\ T|_{\Gamma_u} \in \rH^{1,p}(0,a;\rLp(G)) \cap \rLp(0,a;\rH^{2,p}_\per(G)).
    \end{aligned}
\end{equation*}
$\mathrm{(b)}$ Assume additionally that $T_0 \in \rL^\infty(\mathcal{O})$ such that $T_0|_{\Gamma_u} \in \rL^\infty(G)$. Then the solution exists globally in time, i.\ e., $a =\tau$.
\end{thm}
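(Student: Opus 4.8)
The plan is to obtain (a) from abstract maximal $\rL^p$-regularity combined with a contraction argument, and (b) by upgrading the local solution to a global one through uniform a priori bounds, derived first via a maximum principle and then via energy estimates that exploit it.

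For (a), I would work with the equivalent formulation \eqref{eq: primitive + EBM simplified} and view the coupled system as a semilinear evolution equation on $\rX_0 = \rLp_{\sigmabar}(\mathcal{O};\R^2) \times \rLp(\mathcal{O}) \times \rLp(G)$. Its linear part decouples into the hydrostatic Stokes operator governing $v$ (which has maximal $\rL^p$-regularity, cf.\ \cite{HK:16,GGHHK:20b}) and the temperature operator $A$ from \eqref{eq: A1}. Since $-A+\omega$ has a bounded $\Hinfty$-calculus of angle $<\pi/2$ (\autoref{sec: lin and loc}), it enjoys maximal $\rL^p$-regularity with exactly the solution class appearing in the statement, the dynamic boundary condition $T|_{\Gamma_u}=\rho$ being encoded in $\D(A)$. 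It then remains to show that the nonlinear right-hand side — the quadratic terms $v\cdot\nablaH v + w(v)\,\dz v$ and $u\cdot\nabla T$, the hydrostatic coupling $\nablaH\int_0^z T\,\d\xi$, the boundary transport $v|_{\Gamma_u}\cdot\nablaH\rho$, and the reaction $R(x,\rho)$ — maps a ball in the maximal-regularity space into $\rLp(0,a;\rX_0)$ and is Lipschitz there with constant $o(1)$ as $a\to0$. The coupling and reaction terms are lower order and are handled by Hölder in time together with the mixed space–time embeddings of the maximal-regularity space; the quadratic terms are estimated exactly as for the primitive equations; and the two trace terms (boundary transport and vertical flux $(\dz T)|_{\Gamma_u}$) are matched to $\D(A)$ by the trace theorem. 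A contraction on a short interval $[0,a]$ then yields the unique local strong solution, together with a blow-up criterion.

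For (b), the sole obstruction to continuing the solution up to $\tau$ is the boundary transport term, since testing the $\rho$-equation with $\rho$ produces $\int_G (v|_{\Gamma_u}\cdot\nablaH\rho)\,\rho\,\d x = -\tfrac12\int_G (\divH v|_{\Gamma_u})\,\rho^2\,\d x$, which does not vanish because $v|_{\Gamma_u}$ is not horizontally divergence-free (only $\vbar$ is). The first step is therefore to establish a maximum principle. Exploiting the sign of $R$ — for $|\rho|$ large the Stefan–Boltzmann term $-|\rho|^3\rho$ dominates the bounded radiative input $Q(x)\beta(\rho)$ — one shows that an interval $[-M,M]$, with $M$ depending only on $\|T_0|_{\Gamma_u}\|_{\rL^\infty(G)}$ and the data, is invariant, hence $\|\rho(t)\|_{\rL^\infty(G)}\le M$ uniformly in $t$; a standard parabolic maximum principle for the advection–diffusion equation for $T$, with Dirichlet datum $\rho$ on $\Gamma_u$ and Neumann condition on $\Gamma_b$, then gives $\|T(t)\|_{\rL^\infty(\mathcal{O})}\le\max(\|T_0\|_{\rL^\infty(\mathcal{O})},M)$. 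This adapts Ewald–Temam \cite{Temam2001} to the present dynamic boundary condition and is the only place where the extra $\rL^\infty$-hypothesis is used.

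The second step then closes the energy estimates. Using the bound above, the critical term is controlled by $\tfrac12 M^2\,\|\divH v|_{\Gamma_u}\|_{\rL^1(G)}$, and the trace $\|\nablaH v|_{\Gamma_u}\|_{\rL^2(G)}$ is estimated by interpolation between $\|v\|_{\rLp_{\sigmabar}}$ and $\|v\|_{\rH^{2,p}}$, so that its top-order part is absorbed into the dissipation and the remainder is integrable in time and handled by Gronwall. Combining this with the barotropic/baroclinic $\rL^2$ and $\rH^1$ estimates for the primitive equations in the spirit of Cao–Titi \cite{CT:07} — the buoyancy coupling $\nablaH\int_0^z T$ being lower order once $T$ is bounded — yields uniform bounds on $\|v(t)\|_{\rH^1}$, $\|T(t)\|_{\rH^1}$ and $\|\rho(t)\|_{\rH^1}$ on every finite interval. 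These a priori bounds contradict the blow-up criterion from (a), so the maximal existence time satisfies $a=\tau$. The main obstacle throughout is the control of the non-divergence-free boundary transport term, which is precisely why the maximum principle for $\rho$ is the decisive new ingredient.
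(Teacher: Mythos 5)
Your part (a) is essentially the paper's argument (maximal $\rL^p$-regularity for the hydrostatic Stokes part and for the operator $A$ via its $\mathcal{H}^\infty$-calculus, nonlinear estimates, contraction around a reference solution), so no objection there. In part (b), however, your maximum principle is gapped as stated. You propose to bound $\rho$ \emph{first}, by invariance of $[-M,M]$ coming from the sign of $R$, and only afterwards run a standard parabolic maximum principle for $T$ with Dirichlet datum $\rho$. But the $\rho$-equation contains the coupling flux $(\dz T)|_{\Gamma_u}$, whose sign at a maximum point of $\rho$ is unknown unless that point is simultaneously a \emph{global} maximum of $T$ over $\bar{\mathcal{O}}$; the sign of $R$ alone cannot prevent this flux from pushing $\rho$ past any prescribed bound, so the interval $[-M,M]$ is not invariant for the $\rho$-equation in isolation and your first step cannot start. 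The argument must be run \emph{jointly}, as in the paper (\autoref{prop:linftylinfty}, following Ewald--Temam): locate the global maximum of $T$ on $\bar{\mathcal{O}} \times [0,t_1]$ and distinguish cases (initial time, bottom boundary, interior, upper boundary); precisely when the maximum sits on $\Gamma_u$ one has $(\dz T)|_{\Gamma_u} \geq 0$ automatically, so the flux term has a favourable sign and the reaction term yields $T_{\mathrm{max}}^4 \leq \beta_2$. This joint structure is the decisive point your decoupled version misses.

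Your treatment of the critical boundary term in the energy estimate also fails as written. After integrating by parts you must control $\tfrac12 M^2 \|\divH v|_{\Gamma_u}\|_{\rL^1(G)}$, which is a trace of $\nablaH v$ and hence requires $v \in \rH^{3/2+\eps}(\mathcal{O})$; by interpolation this costs $\eps \|v\|^2_{\rH^2}$ plus lower order, and the $\rH^2$-dissipation is simply not available at the $\rL^2$-energy level, where the dissipation is only $\|\nabla v\|_2^2$. Since the $\rH^1$-level estimates (where $\|\Delta v\|_2^2$ does appear) require the closed $\rL^2$-level bounds as input for Gronwall, your hierarchy is circular. The paper avoids the problem by \emph{not} integrating by parts: it estimates $|\int_G v|_{\Gamma_u} \cdot \nablaH \rho \cdot \rho| \leq \|v|_{\Gamma_u}\|_2 \, \|\nablaH \rho\|_2 \, \|\rho\|_\infty$ directly, then uses the trace estimate $\|v|_{\Gamma_u}\|_{\rL^2(G)} \leq C \|\nabla v\|_2^{1/2} \|v\|_2^{1/2}$ and Young's inequality to absorb everything into $\|\nabla v\|_2^2$ and $\|\nablaH \rho\|_2^2$ --- only a trace of $v$ itself, controllable by the first-level dissipation, ever appears. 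Finally, note that your blow-up argument only yields globality in the $\rL^2$-framework; for $p \in (2,\infty)\setminus\{3\}$ the paper additionally invokes parabolic smoothing and a bootstrap as in \cite[Section~6.2]{Hieber2016}, a step your proposal omits.
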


The solution constructed above enjoys additional regularity, formulated in the following corollary.

\begin{cor}[Regularity]\label{cor:regularity} \mbox{} \\
    Let $p \geq 2$ and $(v,T,\rho)$ the unique, global strong solution to \eqref{eq: primitive + EBM simplified} subject to \eqref{eq:bc} given in \autoref{thm: local wp}. Then  
    \begin{equation*}
        v \in \rC^\alpha((0,\tau) \times \mathcal{O}), \ T \in \rC^\alpha((0,\tau) \times \mathcal{O}) \ \text{ and } \ \rho \in \rC^\alpha((0,\tau) \times G),
    \end{equation*}
    where $\alpha \in \{ \infty, \omega\}$.
\end{cor}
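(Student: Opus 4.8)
\emph{Strategy.} The plan is to upgrade the global strong solution of \autoref{thm: local wp} to a smooth, indeed real-analytic, one by combining the smoothing effect of the analytic semigroup generated by $A$ (whose bounded $\Hinfty$-calculus of angle $<\pi/2$ is established in \autoref{sec: lin and loc}) with the \emph{parameter trick} for parabolic evolution equations. Since the assertion concerns the open time interval $(0,\tau)$ and the open sets $\mathcal{O}$, $G$, only interior regularity is needed; in particular the analytic smoothing removes any dependence on compatibility of the initial data, so no regularity at $t=0$ is required. Throughout I regard \eqref{eq: primitive + EBM simplified} as a semilinear evolution equation in a maximal-regularity pair $(X_0,X_1)$, with $X_0=\rLp_{\sigmabar}(\mathcal{O};\R^2)\times\rLp(\mathcal{O})\times\rLp(G)$ and $X_1$ the associated regularity space, whose linear part has maximal $\rLp$-regularity by the bounded $\Hinfty$-calculus of $A$ together with the corresponding property for the primitive-equation velocity part \cite{HK:16, GGHHK:20b}.

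\emph{Smoothness ($\alpha=\infty$).} First I would run a bootstrap. The right-hand sides of \eqref{eq: primitive + EBM simplified} are, componentwise, smooth Nemytskii functions of $(v,T,\rho)$ and their first spatial derivatives: the advection terms $v\cdot\nablaH v$, $w(v)\,\dz v$, $u\cdot\nabla T$, the boundary transport $v|_{\Gamma_u}\cdot\nablaH\rho$, and the reaction $R(x,\rho)$, the latter being smooth in $\rho$ on the bounded range produced by the maximum principle of \autoref{sec: global}. Starting from the regularity in \autoref{thm: local wp} and using maximal $\rLp$-regularity together with the mixed-derivative theorem and Sobolev embeddings, each iteration raises the differentiability order of the source (and of the dynamic boundary datum $\rho$ feeding the flux $(\dz T)|_{\Gamma_u}$), which by parabolic regularity lifts $(v,T,\rho)$ by one order in space and time on any compactly contained subinterval of $(0,\tau)$. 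Iterating yields $v,T\in\rC^\infty((0,\tau)\times\mathcal{O})$ and $\rho\in\rC^\infty((0,\tau)\times G)$.

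\emph{Analyticity ($\alpha=\omega$).} For real-analyticity I would use the parameter trick in the spirit of \cite{GGHHK:20b}. Fix an interior point $(t_0,x_0)$ and introduce scaling parameters $\lambda$ (in time) and $\mu$ (in the horizontal, exploiting periodicity, and in the vertical variable). Setting $U_{\lambda,\mu}(t,x):=U(\lambda t,\,x_0+\mu(x-x_0))$ for $U=(v,T,\rho)$, one checks that $U_{\lambda,\mu}$ solves a parametrized system $\Phi(\lambda,\mu,U_{\lambda,\mu})=0$ in the maximal-regularity space over a small cylinder. The map $\Phi$ is jointly real-analytic in $(\lambda,\mu,U)$, since the scalings enter polynomially and all nonlinearities are compositions of analytic maps, and at $(\lambda,\mu)=(1,1)$ it reproduces the reference solution. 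The Fréchet derivative $D_U\Phi$ at this point is exactly the linearization of the coupled system, which is an isomorphism of the maximal-regularity spaces by the bounded $\Hinfty$-calculus of $A$ and the maximal $\rLp$-regularity of the linearized primitive equations. The analytic implicit function theorem then yields an analytic branch $(\lambda,\mu)\mapsto U_{\lambda,\mu}$; differentiating in $\lambda$ and $\mu$ at $(1,1)$ expresses $t_0\partial_t U$ and the spatial dilations of $U$ in terms of derivatives of $U$, so analytic dependence on $(\lambda,\mu)$ transfers to joint real-analyticity of $U$ near $(t_0,x_0)$. As $(t_0,x_0)$ is arbitrary in the interior, $v,T\in\rC^\omega$ and $\rho\in\rC^\omega$, which also re-proves $\alpha=\infty$.

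\emph{Main obstacle.} The chief difficulty is the dynamic boundary coupling. In the parameter trick one must verify that the rescaled problem remains in the abstract class for which $D_U\Phi$ is invertible: the coupling $T|_{\Gamma_u}=\rho$ and the flux $(\dz T)|_{\Gamma_u}$ have to transform compatibly under the spatial scaling and stay lower order in the maximal-regularity sense. I expect to treat time-analyticity first (where no boundary issue arises), then horizontal analyticity via the periodic structure (complexifying the horizontal Fourier variable), and finally the vertical direction, where the one-dimensionality and the conditions at $\Gamma_b$ and $\Gamma_u$ are handled separately. A secondary caveat is that, since $R(x,\rho)=Q(x)\beta(\rho)-|\rho|^3\rho$ is only finitely smooth at $\rho=0$, the statement $\alpha\in\{\infty,\omega\}$ is understood on the range of $\rho$, which is bounded by the maximum principle and, for physical temperatures, bounded away from the non-analytic point.
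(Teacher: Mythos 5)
Your high-level skeleton is the same as the paper's: both arguments run Angenent's parameter trick \cite{A:90}, feed the invertibility of the linearization from the maximal regularity theory of \autoref{sec: lin and loc} (and \cite{GGHHK:20b} for the velocity part), and invoke the $\rC^\infty$ resp.\ $\rC^\omega$ implicit function theorem. The genuine gap is in your choice of parameter family. You conjugate with dilations $x \mapsto x_0 + \mu(x-x_0)$, but these do not map $\mathcal{O} = G \times (0,1)$ into itself, do not respect the horizontal periodicity, and move the boundaries $\Gamma_u$, $\Gamma_b$, so the dynamic coupling $T|_{\Gamma_u} = \rho$ and the flux term $(\dz T)|_{\Gamma_u}$ are not transported into a problem of the same functional-analytic class --- precisely the obstruction you flag yourself, and your proposed repair (treating time, horizontal and vertical directions separately, complexifying the horizontal Fourier variable) abandons the implicit function theorem framework and is not carried out; the vertical direction near the dynamic boundary is exactly where it is unclear how to proceed. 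The paper sidesteps all of this with a \emph{truncated translation}: $\theta_{\lambda,\xi}(t,x) = (t + \lambda t,\, x + t\xi\zeta(x))$, where $\zeta$ is a cutoff supported in a ball compactly contained in $\mathcal{O}$. Since $\theta_{\lambda,\xi}$ is the identity near $\partial\mathcal{O}$ and at $t=0$, the push-forward $\Theta_{\lambda,\xi}$ is an isomorphism of the maximal regularity spaces, the transformed system keeps the \emph{same} boundary conditions and initial data, and the map $\rH(\lambda,\xi,\cdot)$ stays within the class where $\D_{(v,T)}\rH$ is an isomorphism; smoothness and analyticity near $(\tau_0,x_0)$ then drop out of $(\lambda,\xi) \mapsto (v,T)(\tau_0+\lambda, x_0+\tau_0\xi)$ without any dilation-vector-field argument.

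A second gap: the corollary claims $\rho \in \rC^\alpha((0,\tau)\times G)$, i.e.\ regularity \emph{on the boundary} $\Gamma_u$, which your interior argument does not reach and which your sketch leaves unaddressed. The paper proves it by a separate step, fixing a point of $\Gamma_u$ and rerunning the parameter trick there, using that $\mathcal{F}^\rho(v,\rho) = v|_{\Gamma_u}\cdot\nablaH\rho - R(x,\rho)$ is real analytic between the relevant spaces; horizontal translations along $\Gamma_u$ are unobstructed thanks to periodicity, so no vertical deformation is needed. On the credit side, your caveat that $R(x,\rho) = Q(x)\beta(\rho) - |\rho|^3\rho$ is not analytic (indeed not even $\rC^\infty$) at $\rho = 0$, so the conclusion must be read on the range of $\rho$ supplied by the maximum principle of \autoref{prop:linftylinfty}, is well taken and is in fact more careful than the paper, which asserts analyticity of $\mathcal{F}^\rho$ without comment (the same remark applies to $Q \in \rC_b^1(G)$, which limits smoothness in $x$). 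Your bootstrap route to $\alpha = \infty$ is a legitimate alternative the paper does not take --- the paper obtains $\rC^\infty$ and $\rC^\omega$ simultaneously from the implicit function theorem --- but it, too, would need the boundary step for $\rho$ to be spelled out.
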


\begin{thm}[Global well-posedness of \eqref{eq: primitive + EBM simplified1}] \mbox{} \\
    \label{thm: global stoch}
    Let $0<\tau<\infty$, $H=(0,H_\rho) \in \rL^2(\Omega\times (0,\tau);\rL^2(\mathcal{U}; \rH_\per^1(\mathcal{O}) \times \rH_\per^1(G)))$ and assume
    \begin{equation*}
        v_0 \in \rH_\per^1(\mathcal{O};\R^2) \cap \rL^2_{\sigmabar}(\mathcal{O};\R^2) \ \text{ as well as }\ \tilde{T}_0 \in \rH_\per^1(\mathcal{O}) \ \text{ such that } \ \tilde{T}_0|_{\Gamma_u} \in \rH_\per^1(G).
    \end{equation*}
    Then the system \eqref{eq: primitive + EBM simplified1} subject to the boundary conditions \eqref{eq:bc} admits a unique, global, strong solution $(v, \tilde{T})$ satisfying 
\begin{equation*}
    \begin{aligned}
         v &\in \rH^{1}(0,\tau;\rL^2_{\sigmabar}(\mathcal{O};\R^2)) \cap \rL^2(0,\tau;\rH_{\per}^{2}(\mathcal{O};\R^2) \cap \rL^2_{\sigmabar}(\mathcal{O};\R^2)) \ \text{ and } \\
         \tilde{T} &\in \rH^{1}(0,\tau;\rL^2(\mathcal{O})) \cap \rL^2(0,\tau;\rH_\per^{2}(\mathcal{O})) \ \text{ such that }\ \tilde{T}|_{\Gamma_u} \in \rH^{1}(0,\tau;\rL^2(G)) \cap \rL^2(0,\tau;\rH^{2}_\per(G)).
    \end{aligned}
\end{equation*}
Then the solution $(v,T,\rho)$ of \eqref{eq: primitive + EBM simplified stohcastic} is given by $(v,T,\rho)^\top=(v,\tilde{T},\tilde{T}|_{\Gamma_u})^\top+(0,Z)^\top$, where $Z=(0,Z_\rho)^\top$ is the solution of the linear stochastic evolution equation \eqref{eq: stoch eq}.
\end{thm}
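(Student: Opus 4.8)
The plan is to establish the result in three stages --- local existence, a priori energy estimates, and a continuation argument --- following the blueprint of \autoref{thm: local wp} but in the Hilbert-space case $p=2$ and with $Z_\rho$ treated as a fixed inhomogeneity. By stochastic maximal regularity for \eqref{eq: stoch eq} (available because $-A+\omega$ has a bounded $\mathcal{H}^\infty$-calculus of angle $<\pi/2$ on $\rX_0$, see \autoref{sec: lin and loc}), the path $Z_\rho$ has, almost surely, the regularity $\rC([0,\tau];\rH_\per^1(G)) \cap \rL^2(0,\tau;\rH_\per^2(G))$; throughout I would work pathwise and regard $Z_\rho$ as a given forcing with this regularity. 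For local existence I would invoke the maximal $\rL^2$-regularity of $A$ and a contraction argument on $\rH^1(0,a;\rX_0)\cap \rL^2(0,a;\D(A))$, coupled with the primitive-equation component for $v$, treating $v\cdot\nablaH v + w(v)\cdot\dz v$, the transport and reaction terms, and the $Z_\rho$-dependent terms as perturbations. Since $Z_\rho\in \rL^2(0,\tau;\rH^2)\hookrightarrow \rL^2(0,\tau;\rL^\infty)$ and $\vbar$ is controlled by the velocity, these form admissible lower-order right-hand sides.

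The heart of the proof is the basic $\rL^2$ energy estimate, where the replacement of $v|_{\Gamma_u}$ by $\vbar$ is decisive. Testing the $\tilde T$-equation with $\tilde T$ over $\mathcal{O}$ and the $\tilde\rho$-equation with $\tilde\rho$ over $G$, and adding, the two boundary contributions $-\int_{\Gamma_u}(\dz\tilde T)|_{\Gamma_u}\tilde\rho\,\d x$ and $+\int_G(\dz\tilde T)|_{\Gamma_u}\tilde\rho\,\d x$ cancel by the dynamic boundary condition $\tilde T|_{\Gamma_u}=\tilde\rho$ together with $(\dz\tilde T)|_{\Gamma_b}=0$. Because $\divH\vbar=0$, the transport term satisfies $\int_G(\vbar\cdot\nablaH\tilde\rho)\,\tilde\rho\,\d x = -\tfrac12\int_G(\divH\vbar)\,\tilde\rho^2\,\d x = 0$, so \emph{no maximum principle is needed}; this is precisely where the modified model is easier than \eqref{eq: primitive + EBM simplified}. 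The surviving dangerous terms are $\int_G(\vbar\cdot\nablaH Z_\rho)\,\tilde\rho\,\d x$, bounded by $\|\vbar\|_{\rL^4}\|\nablaH Z_\rho\|_{\rL^2}\|\tilde\rho\|_{\rL^4}$ and absorbed via Young's inequality, and the reaction term, for which I would exploit the dissipative structure of $R$: writing $\eta:=\tilde\rho+Z_\rho$ and using boundedness of $Q$ and $\beta$,
\begin{equation*}
\int_G R(x,\eta)\,\tilde\rho\,\d x \le C\bigl(1+\|\tilde\rho\|_{\rL^2}^2\bigr) - \|\eta\|_{\rL^5}^5 + \|\eta\|_{\rL^5}^4\,\|Z_\rho\|_{\rL^5} \le C\bigl(1+\|\tilde\rho\|_{\rL^2}^2+\|Z_\rho\|_{\rL^5}^5\bigr) - \tfrac12\|\eta\|_{\rL^5}^5 ,
\end{equation*}
where $\|Z_\rho\|_{\rL^5}$ is finite since $\rH^1(G)\hookrightarrow\rL^5(G)$ in dimension two. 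This yields a Gronwall-type $\rL^2$ bound for $(\tilde T,\tilde\rho)$ in terms of $v$ and $Z_\rho$.

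Next I would upgrade to the $\rH^1$ level and couple to the velocity. For $(\tilde T,\tilde\rho)$ one tests the interior equation with $-\Delta\tilde T$ and the boundary equation with $-\DeltaH\tilde\rho$, controlling $u\cdot\nabla\tilde T$ by anisotropic Ladyzhenskaya-type estimates and the $Z_\rho$- and reaction terms by interpolation (here the regularity $Z_\rho\in\rL^2(0,\tau;\rH^2)$ enters). For $v$ one invokes the Cao--Titi a priori estimates \cite{CT:07} for the primitive equations, now with forcing $\nablaH\int_0^z\tilde T\,\d\xi$ controlled by the temperature bounds, while the nonlinearities $v\cdot\nablaH v$ and $w(v)\cdot\dz v$ are handled exactly as in \autoref{thm: local wp}. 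Since the temperature estimate requires $\vbar$ and the velocity estimate requires $\tilde T$, the two a priori inequalities must be added and closed simultaneously by a single Gronwall argument on the combined energy $\|v\|_{\rH^1}^2+\|\tilde T\|_{\rH^1}^2+\|\tilde\rho\|_{\rH^1}^2$. The resulting bounds are independent of the existence time $a$ and locally integrable on $[0,\tau]$ (thanks to $Z_\rho\in\rC(\rH^1)\cap\rL^2(\rH^2)$), so the standard continuation principle upgrades the local solution to a global one with $a=\tau$, in the asserted regularity class; reconstructing $(v,T,\rho)^\top=(v,\tilde T,\tilde T|_{\Gamma_u})^\top+(0,Z)^\top$ then gives the solution of \eqref{eq: primitive + EBM simplified stohcastic}. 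The main obstacle is this third stage: closing the \emph{coupled} $\rH^1$ estimate uniformly in time, where the velocity nonlinearities, the coupling through $\vbar$ and $\nablaH\int_0^z\tilde T$, and the $Z_\rho$-dependent sources --- with $Z_\rho$ only of $\rL^2(\rH^2)$ pathwise regularity --- must all be absorbed into the dissipation without any smallness assumption on the data.
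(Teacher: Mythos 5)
Your overall architecture coincides with the paper's proof: pathwise regularity of $Z_\rho$ from stochastic maximal regularity (\autoref{prop: stoch max reg}), local existence via maximal $\rL^2$-regularity of $A$ together with estimates on the $Z_\rho$-dependent nonlinearities, an $\rL^2$ energy estimate in which $\int_G(\vbar\cdot\nablaH\tilde\rho)\,\tilde\rho\,\d x=0$ because $\divH\vbar=0$ --- which is precisely why the paper replaces $v|_{\Gamma_u}$ by $\vbar$ in the stochastic model --- followed by $\rH^1$-level estimates and continuation through the maximal regularity norm. Your treatment of the reaction term, absorbing $\|\eta\|_{\rL^5}^4\|Z_\rho\|_{\rL^5}$ into the quintic dissipation, is equivalent to the paper's estimate $\bigl|\int_G|\tilde\rho+Z_\rho|^3\,Z_\rho\,\tilde\rho\bigr|\leq \eps\|\tilde\rho\|_5^5+C\|Z_\rho\|_5^5$, and your use of $\rH^1(G)\hookrightarrow\rL^5(G)$ and $\rH^2(G)\hookrightarrow\rL^\infty(G)$ is sound.

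The genuine flaw is in your third stage. You assert that the velocity and temperature $\rH^1$ inequalities ``must be added and closed simultaneously by a single Gronwall argument on the combined energy,'' and you promote this coupled closure to the main obstacle. As stated, it does not close: the temperature inequality carries the Gronwall coefficient $\|v\|_{\rH^2}^2$ multiplying $\|\nabla\tilde T\|_2^2+\|\nablaH\tilde\rho\|_2^2$, i.e., a product of the velocity \emph{dissipation} with the temperature energy. In a genuinely simultaneous scheme, $\int_0^t\|v\|_{\rH^2}^2\,\d s$ is not yet known to be finite, so Gronwall's lemma is not applicable to the combined quantity. What you miss --- and what the paper exploits --- is that the estimates \emph{decouple sequentially}: the velocity forcing $\nablaH\int_0^z\tilde T(\cdot,\xi)\,\d\xi$ is bounded in $\rL^2(0,\tau;\rL^2(\mathcal{O}))$ already by the $\rL^2$-level energy estimate (which controls $\int_0^t\|\nabla\tilde T\|_2^2\,\d s$), so the bounds $v\in\rL^\infty(0,\tau;\rH^1)\cap\rL^2(0,\tau;\rH^2)$ follow \emph{first}, by the arguments of \cite{GGHHK:20b}, independently of any $\rH^1$ temperature bound; only afterwards is the temperature estimate closed by Gronwall with the now integrable coefficient $\|v\|_{\rH^2}^2$. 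Since you in any case invoke the primitive-equations machinery of \cite{CT:07} with forcing controlled by the temperature, the repair is small --- observe that the $\rL^2$-level bound on the forcing suffices and order the two steps --- after which your argument matches the paper's proof, including the concluding maximal-regularity bound on $\|(v,\tilde T,\tilde\rho)\|_{\E_{1,\tau}}$ used for continuation.
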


\begin{rem}
For simplicity of exposition, we state the results for the models \eqref{eq: primitive + EBM simplified} and \eqref{eq: primitive + EBM simplified1} with $\varepsilon=1$, and we drop the $\varepsilon$-notation on $u$ and $p$. All results, however, remain valid for any fixed choice of $\varepsilon>0$ and imply global well-posedness for the original models \eqref{eq: full model in introduction}, \eqref{eq: primitive + EBM simplified stohcastic}.

\end{rem}

\begin{rem}
  {\rm The surface pressure $p_s$ is omitted in the above theorems as it can be recovered from \eqref{eq: primitive + EBM simplified}$_1$ as well as \eqref{eq: primitive + EBM simplified1}$_1$ by averaging and applying the horizontal divergence to the
  equation, see also \cite{Hieber2016}.} 
\end{rem}

\section{Linear theory and local well-posedness: deterministic setting}\label{sec: lin and loc}

In this section, we establish the maximal regularity results concerning the decoupled linear equations associated with \eqref{eq: primitive + EBM simplified} and prove local well-posedness of the coupled
system. Let $0<\tau \leq \infty$  given a well as $\omega_1, \omega_2 \in \R$.  The linear system for the velocity is given by
\begin{equation}
\left\{
\begin{aligned}
\partial_t v  - \Delta v + \omega_1 v + \nablaH p_s &= f_1,  &\quad \text{ in } &\mathcal{O} \times (0,\tau),\\
\divH \vbar &=0, &\quad \text{ in } &G \times (0,\tau),\\
(\dz v) |_{\Gamma_u \cup \Gamma_b} &= 0, \quad &\text{ in }&G \times (0,\tau),\\
& = v_0, \quad & \text{ in }& \mathcal{O},
\end{aligned}
\right.
\label{eq: lin primitive}
\end{equation}
and the linear equations for the temperature are given by
\begin{equation}
\left\{
\begin{aligned}
\partial_t T -\Delta T +\omega_2 T&= f_2, \quad &\text{ in }& \mathcal{O} \times (0,\tau) \\
 T|_{\Gamma_u} &= \rho, \quad &\text{ in }&G \times (0,\tau),\\
\partial_t \rho - \DeltaH \rho +\omega_2 \rho + (\dz  T)|_{ \Gamma_u}  &= f_3, \quad &\text{ in }&G \times (0,\tau),\\
(\dz T)|_{\Gamma_b} &= 0, \quad &\text{ in }&G \times (0,\tau),\\
T(0)& = T_0, \quad & \text{ in }& G.
\end{aligned}
\right.
\label{eq: simple temperature model linear}
\end{equation}
In order to state the well-posedness result for the linearized primitive equations \eqref{eq: lin primitive}, we define the time trace space
\begin{equation*}
    \rX^v_\gamma  \coloneqq \left \{
    \begin{aligned}
      \{ &v_0 \in\rW_\per^{2(1-\nicefrac{1}{p}),p}(\mathcal{O};\R^2) \cap \rLp_{\sigmabar} (\mathcal{O};\R^2)\ \text{ and }\ (\partial_z v_0)|_{\Gamma_u \cup \Gamma_b}=0 \}, \quad &&p>3, \\
    &\rW_\per^{2(1-\nicefrac{1}{p}),p}(\mathcal{O};\R^2) \cap \rLp_{\sigmabar} (\mathcal{O};\R^2), \quad &&1<p<3, \ p \neq \nicefrac{3}{2}.
    \end{aligned}
 	\right.
\end{equation*}
We then recall the following result from \cite{GGHHK:20b}.

\begin{lem}
    \label{lem: lin prim}
     Let $p\in (1,\infty)\setminus \{ \frac{3}{2}, 3 \}$. Then there exists $\omega_1\geq 0$ such that the system \eqref{eq: lin primitive} admits a unique, strong solution $v$ and $p_s$ satisfying 
     \begin{equation*}
         v \in \E^v_{1,\infty} := \rH^{1,p}(\R_+;\rLp_{\sigmabar}(\mathcal{O};\R^2) \cap \rLp(\R_+;\rH_{\per}^{2,p}(\mathcal{O};\R^2) \cap \rLp_{\sigmabar}(\mathcal{O};\R^2)) \ \text{ and }\ p_s \in \rLp(\R_+;\dot \rH_\per^{1,p}(G))
     \end{equation*}
     if and only if the data $(f_1,v_0)$ satisfies 
     \begin{equation*}
        f_1 \in \rLp((0,\infty)\times \mathcal{O})=:\E_{0,\infty}^v  \ \text{  and } \ v_0 \in \rX^v_\gamma.
     \end{equation*}
     In that case, there exists a constant $C>0$ such that $v$ and $p_s$ satisfy the estimate 
     \begin{equation*}
         \| v \|_{\E^v_{1,\infty}} + \| \nablaH p_s \|_{\rLp(\R_+ \times G)} \leq C \bigr ( \| f_1 \|_{\E^v_{0,\infty}} + \| v_0 \|_{\rX^v_\gamma} \bigl ).
     \end{equation*}
\end{lem}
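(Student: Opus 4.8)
The plan is to eliminate the surface pressure by applying the hydrostatic Helmholtz projection $\PH$ onto $\rLp_{\sigmabar}(\mathcal{O};\R^2)$, thereby recasting \eqref{eq: lin primitive} as an abstract Cauchy problem. Every horizontal gradient $\nablaH p_s$ lies in the kernel of $\PH$, while $v \in \rLp_{\sigmabar}$ is fixed by $\PH$; hence applying $\PH$ to the first line of \eqref{eq: lin primitive} yields
\begin{equation*}
  \partial_t v + (\omega_1 - A_{\mathrm{hs}}) v = \PH f_1, \qquad v(0) = v_0,
\end{equation*}
where $A_{\mathrm{hs}} := \PH \Delta$ is the hydrostatic Stokes operator on $\rLp_{\sigmabar}(\mathcal{O};\R^2)$, with domain $\D(A_{\mathrm{hs}}) = \rH_\per^{2,p}(\mathcal{O};\R^2) \cap \rLp_{\sigmabar}(\mathcal{O};\R^2)$ subject to $(\dz v)|_{\Gamma_u \cup \Gamma_b}=0$ and horizontal periodicity.

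The analytic heart of the matter is that, for $\omega_1$ large enough, $\omega_1 - A_{\mathrm{hs}}$ admits a bounded $\Hinfty$-calculus of angle strictly less than $\pi/2$ on the UMD space $\rLp_{\sigmabar}(\mathcal{O};\R^2)$; this is the statement borrowed from \cite{GGHHK:20b}. Since such a calculus entails $R$-sectoriality of angle below $\pi/2$, Weis's theorem supplies maximal $\rL^p$-regularity, giving a unique solution $v \in \E^v_{1,\infty}$ together with the estimate $\| v \|_{\E^v_{1,\infty}} \leq C\bigl(\|\PH f_1\|_{\E^v_{0,\infty}} + \|v_0\|_{\rX^v_\gamma}\bigr)$, valid exactly when $\PH f_1 \in \E^v_{0,\infty}$ and $v_0$ lies in the real-interpolation trace space $(\rLp_{\sigmabar}(\mathcal{O};\R^2), \D(A_{\mathrm{hs}}))_{1-\nicefrac{1}{p},p}$. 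Boundedness of $\PH$ on $\rLp(\mathcal{O})$ lets one pass freely between $\PH f_1$ and $f_1$.

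It remains to identify the trace space with $\rX^v_\gamma$. Interpolating between $\rLp_{\sigmabar}$ and $\D(A_{\mathrm{hs}})$ produces $\rW_\per^{2(1-\nicefrac{1}{p}),p}(\mathcal{O};\R^2) \cap \rLp_{\sigmabar}(\mathcal{O};\R^2)$, and the first-order condition $(\dz v_0)|_{\Gamma_u \cup \Gamma_b}=0$ survives precisely when it is meaningful as a trace, i.e.\ when $2(1-\nicefrac{1}{p}) > 1 + \nicefrac{1}{p}$, equivalently $p>3$; for $1<p<3$ it drops out, and the remaining value $p=\nicefrac{3}{2}$ is excluded as a further critical exponent of the trace theory. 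This reproduces the two cases in the definition of $\rX^v_\gamma$. Finally, the pressure is recovered by applying $\Id - \PH$ to the first line of \eqref{eq: lin primitive}: since $\partial_t v$ and $\omega_1 v$ belong to $\rLp_{\sigmabar}$, one obtains $\nablaH p_s = (\Id - \PH)(f_1 + \Delta v)$, and the bound on $\|\nablaH p_s\|_{\rLp(\R_+ \times G)}$ follows from boundedness of $\Id - \PH$ and the control of $\Delta v$ already established. The necessity direction is the standard trace argument. I expect the genuine obstacles to be exactly the two inputs quoted from \cite{GGHHK:20b}: the bounded $\Hinfty$-calculus for $A_{\mathrm{hs}}$ and the sharp trace-space characterization carrying the boundary condition.
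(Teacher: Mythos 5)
Your proposal is correct and takes essentially the same route as the paper, which in fact offers no proof of its own here but simply recalls the result from \cite{GGHHK:20b}, where it is established exactly along your lines: hydrostatic Helmholtz projection to eliminate $\nablaH p_s$, bounded $\Hinfty$-calculus of angle below $\pi/2$ for the (shifted) hydrostatic Stokes operator, maximal $\rL^p$-regularity via $R$-sectoriality and Weis's theorem, real-interpolation identification of the trace space in which the Neumann condition becomes visible precisely for $p>3$ (with $p=\nicefrac{3}{2}$ as the other critical exponent), and pressure recovery by applying $\Id-\PH$. Nothing to correct.
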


To establish our well-posedness result for \eqref{eq: primitive + EBM simplified}, we associate with its left-hand side the differential operator $\mathcal{A}(D,D_\H)$ defined by
\begin{equation*}
    \mathcal{A}(D,D_\H)(T,\rho) :=     
    (\Delta T, -\gamma \partial_z T + \DeltaH \rho ),
\end{equation*}
subject to the boundary condition $(\dz T)|_{\Gamma_b}=0$. Here, $\gamma$ denotes the trace operator and $\DeltaH$ denotes the two-dimensional Laplace operator with periodic boundary conditions. For $p \in (1,\infty),$ consider the $\rLp$-realization of $\mathcal{A}$ in the ground space $\rX^{(T,\rho)}_0 := \rLp(\mathcal{O}) \times \rLp(G)$, which is defined by
\begin{equation}
    \begin{aligned}\label{eq: A}
        A(T,\rho) &:= \mathcal{A}(D,D_\H)(T,\rho),\\
        \D(A) &:= \bigr  \{ (T,\rho) \in \rH_\per^{2,p}(\mathcal{O}) \times \rH_\per^{2,p}(G) \ \colon T|_{\Gamma_u}= \rho, \  (\dz T)|_{\Gamma_b} =0 \bigl \}.
    \end{aligned}
\end{equation}
It was shown in  \cite{Denk2008} that the $\rLp$-realization $A_{1-\nicefrac{1}{p}}$ of $\mathcal{A}$ in the ground space $\rX^{(T,\rho)}_{1-\nicefrac{1}{p},0} = \rLp(\mathcal{O}) \times \rW^{1-\nicefrac{1}{p},p}(G)$ has the property of maximal $\rLp$-regularity. In the following,  we show that the operator $A$ admits a bounded $\mathcal{H}^\infty$-calculus of angle strictly less than $\pi/2$. A similar result holds
true when considering the $\rLp$-realization of $\mathcal{A}$ in the ground space $\rX_{s,0}^{(T,\rho)} := \rLp(\mathcal{O}) \times \rW_\per^{s,p}(G)$ for $s\in [-\frac{1}{p}, 1-\frac{1}{p}]$, see \autoref{rem: Hunendlich}.

\begin{lem}
    \label{lem: max reg A}
     Let $p\in (1,\infty)$. Then there exists $\omega_0 \in \R$ such that for all $\omega>\omega_0$ the operator $-A+\omega$ admits a bounded $\mathcal{H}^\infty$-calculus on $\rX^{(T,\rho)}_0 = \rLp(\mathcal{O}) \times \rLp(G)$ of angle $\Phi_A < \pi/2$.
\end{lem}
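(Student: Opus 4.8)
The plan is to combine the selfadjoint structure of $-A$ on the Hilbert-space level with a Dirichlet--to--Neumann reduction that transfers the problem to the surface $G$, where the product geometry $\mathcal{O}=G\times(0,1)$ and the periodicity make all symbols explicit. As a base case and a guide, first observe that $-A$ is, in $\rL^2(\mathcal{O})\times\rL^2(G)$, the nonnegative selfadjoint operator associated with the symmetric, closed, nonnegative form
\[
a\big((T,\rho),(S,\eta)\big)=\int_{\mathcal{O}}\nabla T\cdot\overline{\nabla S}+\int_{G}\nablaH\rho\cdot\overline{\nablaH\eta},
\]
with form domain $\{(T,\rho):T\in\rH^1_\per(\mathcal{O}),\,\rho\in\rH^1_\per(G),\,T|_{\Gamma_u}=\rho\}$. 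Indeed, integrating by parts and using $(\partial_z T)|_{\Gamma_b}=0$ together with periodicity on $\Gamma_l$ recovers $-A(T,\rho)=(-\Delta T,\ \gamma\partial_z T-\DeltaH\rho)$, and elliptic regularity identifies the form domain of the associated operator with $\D(A)$. Hence for $\omega>0$ the operator $\mathcal{L}:=-A+\omega$ is selfadjoint and $\geq\omega>0$ on $\rL^2(\mathcal{O})\times\rL^2(G)$, so it has a bounded $\mathcal{H}^\infty$-calculus of angle $0$; this settles $p=2$ and suggests that for $p\neq2$ one should produce the resolvent explicitly.

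First I would set up the resolvent. Fix $\theta<\pi/2$; for $\lambda\in\Sigma_{\pi-\theta}$ solve $(\lambda+\mathcal{L})(T,\rho)=(f,g)$, i.e.\ $(\lambda+\omega-\Delta)T=f$ with $(\partial_z T)|_{\Gamma_b}=0$ and $T|_{\Gamma_u}=\rho$, coupled to $(\lambda+\omega-\DeltaH)\rho+\gamma\partial_z T=g$. Splitting $T=R_{\mathrm{int}}(\lambda)f+\mathcal{E}(\lambda)\rho$, where $R_{\mathrm{int}}(\lambda)$ solves the interior problem with homogeneous Dirichlet datum on $\Gamma_u$ and $\mathcal{E}(\lambda)$ is the $(\lambda+\omega)$-elliptic extension of $\rho$ (Neumann on $\Gamma_b$), the system reduces to the single surface equation
\[
B(\lambda)\rho:=\big(\lambda+\omega-\DeltaH+S(\lambda)\big)\rho=g-\gamma\partial_z R_{\mathrm{int}}(\lambda)f,\qquad S(\lambda):=\gamma\partial_z\mathcal{E}(\lambda).
\]
Next I would diagonalise in the tangential Fourier series on $G=(0,1)^2$: both $-\DeltaH$ and the \emph{Dirichlet--to--Neumann operator} $S(\lambda)$ are Fourier multipliers, with eigenvalues $\mu=4\pi^2|k|^2$ and $s(\lambda,\mu)=\kappa\tanh\kappa$, $\kappa=\sqrt{\mu+\lambda+\omega}$ (principal branch), the latter read off from the one-dimensional extension $z\mapsto\cosh(\kappa z)/\cosh\kappa$. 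Thus $B(\lambda)$ has the scalar symbol
\[
b(\lambda,\mu)=\lambda+\omega+\mu+\sqrt{\mu+\lambda+\omega}\,\tanh\!\big(\sqrt{\mu+\lambda+\omega}\,\big).
\]

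The analytic core is then to show, for $\omega$ large, that $b(\lambda,\mu)$ stays in a sector of half-angle $<\pi/2$ bounded away from $0$ for all $\lambda\in\Sigma_{\pi-\theta}$ and $\mu\geq0$, with leading behaviour $|b|\sim|\lambda|+\mu$ (the square-root term being of lower order), and to turn the resulting Marcinkiewicz--Mikhlin bounds on $b^{-1}$, $\mu\,b^{-1}$ and $s(\lambda,\mu)\,b^{-1}$ into operator bounds on $\rLp(G)$. Since $\rLp(G)$ is UMD, the operator-valued Fourier multiplier theorem on the torus then yields that $\{\lambda B(\lambda)^{-1}\}$, $\{\DeltaH B(\lambda)^{-1}\}$ and $\{S(\lambda)B(\lambda)^{-1}\}$ are $\mathcal{R}$-bounded and holomorphic in $\lambda$. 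Combining this with the (classical) bounded $\mathcal{H}^\infty$-calculus of the interior operator $-\Delta+\omega$ with Dirichlet-top/Neumann-bottom/periodic-lateral conditions on $\rLp(\mathcal{O})$ — which controls $R_{\mathrm{int}}(\lambda)$ and the trace $\gamma\partial_z R_{\mathrm{int}}(\lambda)$ — and reassembling $\rho=B(\lambda)^{-1}(\cdots)$, $T=R_{\mathrm{int}}(\lambda)f+\mathcal{E}(\lambda)\rho$, I obtain the resolvent $(\lambda+\mathcal{L})^{-1}$ as an $\mathcal{R}$-bounded, holomorphically $\lambda$-dependent family meeting the $\mathcal{H}^\infty$-bounds; the representation $\psi(\mathcal{L})=\tfrac{1}{2\pi\i}\int_\Gamma\psi(\lambda)(\lambda-\mathcal{L})^{-1}\,\d\lambda$ together with the Kalton--Weis theorem then delivers the bounded $\mathcal{H}^\infty$-calculus of angle $\Phi_A<\pi/2$.

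The hard part will be the Dirichlet--to--Neumann symbol $s(\lambda,\mu)=\sqrt{\mu+\lambda+\omega}\,\tanh(\sqrt{\mu+\lambda+\omega})$: controlling the branch of the square root and the $\tanh$-correction uniformly over $(\lambda,\mu)\in\Sigma_{\pi-\theta}\times[0,\infty)$, ensuring that the coupled symbol $b$ neither vanishes nor leaves a sector of half-angle $<\pi/2$ (this is exactly where the shift $\omega$ and the sign of the boundary flux enter), and then upgrading the pointwise symbol estimates to $\mathcal{R}$-bounds on $\rLp(G)$. This is precisely the dynamic-boundary-condition difficulty treated abstractly by the Newton-polygon method of Denk, Pr\"uss and Zacher \cite{Denk2008}; the product geometry here only makes the symbols explicit. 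Finally, the passage to the other ground spaces $\rX_{s,0}^{(T,\rho)}=\rLp(\mathcal{O})\times\rW_\per^{s,p}(G)$, $s\in[-\nicefrac{1}{p},1-\nicefrac{1}{p}]$ — in particular to the space of \cite{Denk2008} at $s=1-\nicefrac{1}{p}$ — follows by interpolation--extrapolation along this scale, as recorded in \autoref{rem: Hunendlich}.
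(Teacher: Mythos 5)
Your route is genuinely different from the paper's. The paper never computes a symbol: it decouples the domain by a similarity transform built from the ($\lambda$-independent) Dirichlet map $\rL_0$, writes $S^{-1}AS = G_0 + Q$ with $G_0$ upper triangular whose diagonal entries $\Delta$ (mixed boundary conditions) and $\DeltaH$ (periodic) have bounded $\Hinfty$-calculi, invokes the operator-matrix theorem of \cite[Theorem~3.2]{MR2047641} for $G_0$, and then absorbs $Q$ --- containing $-\gamma \dz$ and the Dirichlet-to-Neumann operator $\rN$, both relatively $(-\Delta)^\delta$- resp.\ $(-\DeltaH)^\delta$-bounded with $\delta<1$ --- by perturbation theory for the $\Hinfty$-calculus. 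Your reduction to the surface equation and the symbol $b(\lambda,\mu)=\lambda+\omega+\mu+\kappa\tanh\kappa$ with $\kappa=\sqrt{\lambda+\omega+\mu}$ is correct (the extension $\cosh(\kappa z)/\cosh\kappa$ matches $\Gamma_u=G\times\{1\}$, $\Gamma_b=G\times\{0\}$), and the nonvanishing is even cleaner than you suggest: since $b=\kappa^2+\kappa\tanh\kappa$, a zero would force $\kappa=-\tanh\kappa$ with $\Re\,\kappa>0$, impossible because $\Re\tanh\kappa>0$ on the right half-plane. Note, however, that $b$ itself need \emph{not} stay in a sector of half-angle $<\pi/2$ as $\lambda$ ranges over $\Sigma_{\pi-\theta}$ with $\theta<\pi/2$; what you actually need, and what holds, is the lower bound $|b(\lambda,\mu)|\geq c\,(1+|\lambda|+\mu)$ on that region. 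Your approach trades the paper's soft, geometry-free argument for explicit symbols made possible by the flat product domain, which in principle yields sharper quantitative information ($\mathcal{R}$-sectoriality constants, explicit angle).

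The genuine gap is your final inference: ``$\mathcal{R}$-bounded, holomorphic resolvent family plus the Kalton--Weis theorem $\Rightarrow$ bounded $\Hinfty$-calculus'' is not a valid implication. $\mathcal{R}$-sectoriality of angle $<\pi/2$ does not imply a bounded $\Hinfty$-calculus (there exist $\mathcal{R}$-sectorial operators without any bounded $\Hinfty$-calculus), and the Kalton--Weis theorems presuppose a bounded $\Hinfty$-calculus of one of the ingredients --- of a summand in the sum theorem, or of the base operator in the operator-valued calculus; they cannot manufacture the calculus for $\mathcal{L}$ out of resolvent bounds alone. The repair stays inside your framework but is precisely the work your outline skips: insert your explicit resolvent formula into the Cauchy integral and verify \emph{directly} that, for $\psi\in H_0^\infty(\Sigma_\sigma)$, the resulting horizontal Fourier symbols --- the scalar symbol $\mu\mapsto\frac{1}{2\pi\i}\int_\Gamma\psi(\lambda)\,b(\lambda,\mu)^{-1}\d\lambda$ in the boundary-boundary block, and the operator-valued one-dimensional symbols arising from $R_{\mathrm{int}}(\lambda)$, $\mathcal{E}(\lambda)$ and $\gamma\dz$ in the remaining blocks --- satisfy Marcinkiewicz/$\mathcal{R}$-bounds with constants controlled by $\|\psi\|_\infty$; this is the standard parameter-elliptic verification in the spirit of \cite{Denk2008}. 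Alternatively, you can shortcut exactly as the paper does: your symbol analysis already shows the Dirichlet-to-Neumann correction is of lower order (relatively $(-\DeltaH)^\delta$-bounded for $\delta>\nicefrac{1}{2}$), so after decoupling the coupling terms can be absorbed by perturbation theory for the $\Hinfty$-calculus around the diagonal operator, whose calculus is classical.
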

\begin{proof}
  The proof is based on a similarity transform of the operator $A$ and the fact that the bounded $\mathcal{H}^\infty$-calculus is preserved under similarity,
  see e.g.,  \cite{DHP03}. We divide the proof into two steps. 
  \begin{step} The aim of the first step is to construct an invertible mapping $S\colon \rX_0^{(T,\rho)} \to  \rX_0^{(T,\rho)}$ such that the operator $G:= S^{-1}AS$ has diagonal domain. To this end,
    consider the auxiliary problem
    \begin{equation}
\left\{
\begin{aligned}
\Delta \theta &= 0,  &\quad \text{ in } &\mathcal{O},\\
\theta|_{\Gamma_u} &= \phi, \quad &\text{ in }&G,\\
(\dz \theta) |_{\Gamma_b} &= 0, \quad &\text{ in }&G,\\
\end{aligned}
\right.
\label{eq: aux}
\end{equation}
with $\phi \in \rLp(G)$. We denote its solution operator by $\rL_0 \colon \rLp(G) \rightarrow \rLp(\mathcal{O})$ and show that $\rL_0$ is well-defined and bounded. Notice that for $\phi \in \rC^\infty_\per(G)$
there exists a unique solution $\theta \in \rH^{2,p}_\per(\mathcal{O})$ of \eqref{eq: aux} and as a consequence $\rL_0$ exists as a densely defined operator in $\rLp(G)$ with domain $\rC^\infty_\per(G)$. This allows us to define a unique adjoint $\rL_0'\colon \rL^{p'}(\mathcal{O}) \to \rL^{p'}(\mathcal{O})$ with $\nicefrac{1}{p}+ \nicefrac{1}{p'}=1$. To show that $\rL_0$ is well-defined and bounded, we prove that $\rL_0'$ can be extended to a bounded operator from $\rL^{p'}(G)$ to $\rL^{p'}(\mathcal{O})$. For this, consider $\phi \in \rC^\infty_\per(G)$ and $k \in \rL^{p'}(\mathcal{O})$, and set $g := \Delta^{-1}k$, where $\Delta^{-1}$ denotes the Laplace operator with homogeneous boundary conditions on $\rL^{p'}(\mathcal{O})$. Then, by Green's second identity and horizontal periodicity, we obtain
\begin{equation*}
    \langle \rL_0 \phi , k \rangle_{\rL^2(\mathcal{O})} = \langle \rL_0 \phi , \Delta g \rangle_{\rL^2(\mathcal{O})} = \langle \Delta \rL_0 \phi , g \rangle_{\rL^2(\mathcal{O})} + \int_G \phi(x)(\dz g)(x, 1) \d x = \langle \phi , \gamma \dz \Delta^{-1} k \rangle_{\rL^2(G)},
\end{equation*}
since $\Delta \rL_0 \phi =0$. We conclude that $\rL_0' = \gamma \dz \Delta^{-1}$, where $\gamma \dz$ denotes the distributional derivative in normal direction with maximal domain $\D(\gamma \dz ) = \{ f \in \rL^{p'}(\mathcal{O}) \colon \gamma \dz f \in \rL^{p'}(G)\}$. Note that $\gamma \dz$ is relatively $(-\DeltaH)^{\delta}$-bounded with $\delta > \nicefrac{1}{2}+ \nicefrac{1}{2p}$, since by trace theory it holds true that $\rH^{r,p}_\per(\mathcal{O}) \subset \D(\gamma \dz)$, for $r > 1 + \nicefrac{1}{q}$.

By density of $\rC^\infty_\per(G)$ in $\rL^{p'}(G)$, we extend the operator $\rL_0'$ to a bounded operator from $\rL^{p'}(G) \to \rL^{p'}(\mathcal{O})$, which implies that $\rL_0$ is bounded
from $\rLp(G)$ to $\rLp(\mathcal{O})$. Moreover, the unique solution of $\eqref{eq: aux}$ satisfies the a-priori estimate $\| \theta \|_{\rLp(\mathcal{O})} \leq C \| \phi \|_{\rLp(G)}$ for some constant $C>0$. Defining 
\begin{equation*}
    S: = \begin{pmatrix}
        \mathrm{Id} & -\rL_0 \\ 0 & \mathrm{Id}
    \end{pmatrix} \colon \rX_0^{(T,\rho)} \to \rX_0^{(T,\rho)}, \ \text{ with inverse }\ S^{-1} = \begin{pmatrix}
        \mathrm{Id} & \rL_0 \\ 0 & \mathrm{Id}
    \end{pmatrix},
\end{equation*}
we consider the operator matrix $G$ defined by
\begin{equation*}
    G := S^{-1}A S = \begin{pmatrix}
        \Delta_0 & -\rL_0 ( \DeltaH +\rN) \\
        -\gamma \dz & \DeltaH + \rN
    \end{pmatrix},
\end{equation*}
with diagonal domain $\D(G) = (T,\rho) \in \{ \rH_\per^{2,p}(\mathcal{O}) \times \rH_\per^{2,p}(G) \colon T|_{\Gamma_u}=0, \ (\dz T)_{\Gamma_b}=0 \}$. Here, $\Delta_0$ denotes the Laplace operator on $\rLp(\mathcal{O})$ with homogeneous mixed boundary conditions and $\rN$ denotes the \emph{Dirichlet-to-Neumann operator} defined by 
\begin{equation*}
    \rN\phi := \gamma \dz \rL_0 \phi, \ \D(\rN):= \{ \phi \in \rLq(G) \colon \rL_0 \phi \in \D(\gamma\dz) \}.
\end{equation*}
Observe that $\rH^{s,p}_\per(G) \subset \D(\rN)$ for all $s>1$, since $\rL_0 \rH_\per^{s,p}(G) \subset \D(\gamma \dz)$ for $s>0$. Hence, the Dirichlet-to-Neumann operator $\rN$ is relatively $(-\DeltaH)^\delta$-bounded for $\delta >\nicefrac{1}{2}$. 
\end{step}
\begin{step}
The second step is devoted to showing that the transformed operator matrix $G$ admits a bounded $\mathcal{H}^\infty$-calculus on  $\rX_0^{(T,\rho)}$ of angle $\Phi_G <\pi/2$. By similarity, we then conclude the same result for the operator matrix $A$.
To this end, we split the matrix $G$ as
\begin{equation*}
    G = G_0 +Q, \ \text{ with }\ G_0 =  \begin{pmatrix}
        \Delta & -\rL_0 \DeltaH \\
        0 & \DeltaH 
    \end{pmatrix} \text{ and }\ Q= \begin{pmatrix}
        0 & -\rL_0 \rN \\
        -\gamma \dz & \rN 
    \end{pmatrix}.
\end{equation*}
To prove the bounded $\mathcal{H}^\infty$-calculus of $G_0$, we verify in the following the assumptions of  \cite[Theorem~3.2]{MR2047641}. First, by \cite{MR2047641} and \cite{MR3019280}, it follows that, up to a shift, the operators $-\Delta$ on $\rLp(\mathcal{O})$ and $-\DeltaH$ on $\rLq(G)$ possess a bounded $\mathcal{H}^\infty$-calculus. Hence, we obtain $\D(\Delta^\delta)= \rH_\per^{2\delta,p}(\mathcal{O})$ and $\D((\DeltaH)^{1+\delta})= \rH_\per^{2+2\delta,p}(G)$ for $\delta >0$. Moreover, by the considerations in the previous step it follows that
\begin{equation*}
    -\rL_0 \DeltaH (  \rH_\per^{2+2\delta,p}(G)) =-\rL_0 \rH_\per^{2\delta,p}(G) \subset \rH_\per^{2\delta,p}(\mathcal{O}) = \D(\Delta^\delta).
\end{equation*}
The closed graph theorem implies that $\| \Delta^\delta(\rL_0 \DeltaH)y \|_{\rLp(\mathcal{O})} \leq C \| (\DeltaH)^{1+\delta}y \|_{\rLp(G)}$ for all $y \in \rH_\per^{2+2\delta,p}(G)$. We conclude that, up to a shift, the operator matrix $G_0$ admits a bounded $\mathcal{H}^\infty$-calculus of angle strictly less that $\pi/2$.

Finally, we treat $G$ as a perturbation of $G_0$ and the results in the previous step yield that the normal derivative $-\gamma \dz$ as well as the Dirichlet-to-Neumann operator $\rN$ are relatively $(-\DeltaH)^\delta$-bounded for a suitable $\delta >0$. By the properties of the Dirichlet map $\rL_0$, it  follows that $\rL_0 \rN$ is relatively $(-\Delta)^\delta$-bounded for all $\delta > \nicefrac{1}{2}$. The claim then follows from perturbation theory for the $\mathcal{H}^\infty$-calculus, see for instance \cite{MR2047641}.
\end{step}
\end{proof}

\begin{rem}\label{rem: Hunendlich}
    {\rm The above result can be extended in the following way. Consider the $\rLp$-realization $A_s$ of $\mathcal{A}$ in the ground space $\rX_{s,0}^{(T,\rho)} = \rLp(\mathcal{O}) \times \rW_\per^{s,p}(G)$ for $s\in [-\frac{1}{p}, 1-\frac{1}{p}]$. Then there exists $\omega_0 \in \R$ such that for all $\omega>\omega_0$ the operator $-A_s+\omega$ admits a bounded $\mathcal{H}^\infty$-calculus on $\rX^{(T,\rho)}_{s,0}$ of angle $\Phi_{A_s} < \pi/2$. The proof is analogous to the one presented above.}
\end{rem}

An immediate consequence of \autoref{lem: max reg A} is the solvability of \eqref{eq: simple temperature model linear}, provided that the forces $(f_2,f_3)$ lie in appropriate spaces. We readily verify that the trace space $\rX^{(T,\rho)}_\gamma = (\rX^{(T,\rho)}_0, \D(A))_{1-\nicefrac{1}{p}} $ is characterized by 
\begin{equation*}
		\rX^{(T,\rho)}_\gamma \coloneqq \left \{
    \begin{aligned}
      \{ &T_0 \in\rW_\per^{2(1-\nicefrac{1}{p}),p}(\mathcal{O})\ \colon T_0|_{\Gamma_u} \in \rW_\per^{2(1-\nicefrac{1}{p}),p}(G) \ \text{ and }\ (\dz T_0)|_{\Gamma_b}=0 \}, \quad &&p>3, \\
   \{ &T_0 \in\rW_\per^{2(1-\nicefrac{1}{p}),p}(\mathcal{O}) \ \colon T_0|_{\Gamma_u} \in \rW_\per^{2(1-\nicefrac{1}{p}),p}(G)  \}, \quad &&1<p<3, \ p \neq \nicefrac{3}{2}.
    \end{aligned}
 	\right.
\end{equation*}
Then the maximal $\rLp$-regularity of the operator $A$ can be stated as follows.
\begin{cor}
    \label{cor: max reg A}
    Let $p\in (1,\infty)\setminus \{ \frac{3}{2}, 3 \}$. Then there exists $\omega \geq 0$ such that the problem \eqref{eq: primitive + EBM simplified} admits a unique, strong solution $T$ satisfying 
    \begin{equation*}
        \begin{aligned}
            (T,\rho) \in \E^{(T,\rho)}_{1,\infty} := \rH^{1,p}(\R_+; \rX^{(T,\rho)}_0) \cap \rLp(\R_+;\D(A))
        \end{aligned}
    \end{equation*}
    if and only if the data satisfies 
    \begin{equation*}
        (f_2,f_3) \in \rLp(0, \tau;\rX^{(T,\rho)}_0) :=\E^{(T,\rho)}_{0,\infty}\ \text{ and }\ T_0 \in \rX_\gamma^T.
    \end{equation*}
In that case, there exists a constant~$C > 0$ such that $T$ satisfies the estimate
    \begin{equation}
        \label{eq: max reg estimate}
        \| (T ,\rho)\|_{\E^{(T,\rho)}_{1,\infty}}  \leq C \bigl ( \| (f_2,f_3) \|_{\E^{(T,\rho)}_{0,\infty}}  + \| T_0 \|_{\rX^{(T,\rho)}_\gamma} \bigr ).
    \end{equation}
\end{cor}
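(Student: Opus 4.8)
The plan is to read \eqref{eq: simple temperature model linear} as the abstract Cauchy problem
\begin{equation*}
  \partial_t U - (A-\omega)U = F, \qquad U(0) = U_0,
\end{equation*}
on the ground space $\rX_0^{(T,\rho)}$, where $U = (T,\rho)^\top$, $F = (f_2,f_3)^\top$ and $U_0 = (T_0, T_0|_{\Gamma_u})^\top$; the coupling $T|_{\Gamma_u} = \rho$ and the Neumann condition at $\Gamma_b$ are already encoded in $\D(A)$, while the shift $\omega_2 = \omega$ is absorbed into $A-\omega$. With this reformulation the corollary is precisely the assertion of maximal $\rLp$-regularity for $A-\omega$ together with the identification of the space of admissible initial data.

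First I would invoke \autoref{lem: max reg A}: for $\omega>\omega_0$ the operator $-A+\omega$ has a bounded $\mathcal{H}^\infty$-calculus of angle $\Phi_A < \pi/2$ on $\rX_0^{(T,\rho)}$. Since $\rX_0^{(T,\rho)} = \rLp(\mathcal{O}) \times \rLp(G)$ is a UMD space for $p \in (1,\infty)$, a bounded $\mathcal{H}^\infty$-calculus of angle $<\pi/2$ implies $\mathcal{R}$-sectoriality of angle $<\pi/2$ and hence, by the Weis characterization of maximal regularity (see e.g. \cite{DHP03}), $A-\omega$ enjoys maximal $\rLp$-regularity. Standard maximal-regularity theory then yields that the solution map $(F,U_0) \mapsto U$ is an isomorphism from $\E_{0,\infty}^{(T,\rho)} \times \rX_\gamma^{(T,\rho)}$ onto $\E_{1,\infty}^{(T,\rho)}$, giving both existence and uniqueness of the strong solution and the estimate \eqref{eq: max reg estimate}, once the trace space is identified as $\rX_\gamma^{(T,\rho)} = (\rX_0^{(T,\rho)}, \D(A))_{1-\nicefrac{1}{p},p}$.

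It therefore remains to compute this real interpolation space. Disregarding the boundary constraints, the standard identities give $(\rLp(\mathcal{O}), \rH_\per^{2,p}(\mathcal{O}))_{1-\nicefrac{1}{p},p} = \rW_\per^{2(1-\nicefrac{1}{p}),p}(\mathcal{O})$ and likewise on $G$. The delicate point is to track which of the constraints in $\D(A)$ survive interpolation: a constraint imposed by a trace operator persists exactly when that trace remains bounded at the interpolated smoothness $2(1-\nicefrac{1}{p})$. The surface trace $T_0 \mapsto T_0|_{\Gamma_u}$, which fixes $\rho_0$, is bounded once $2(1-\nicefrac{1}{p}) > \nicefrac{1}{p}$, i.e. $p > \nicefrac{3}{2}$, explaining the exclusion of $p=\nicefrac{3}{2}$; the normal-derivative trace $(\dz T_0)|_{\Gamma_b}$ is bounded only when $2(1-\nicefrac{1}{p}) > 1 + \nicefrac{1}{p}$, i.e. $p>3$, so the compatibility condition $(\dz T_0)|_{\Gamma_b}=0$ enters the characterization for $p>3$ and is vacuous for $1<p<3$, with the borderline $p=3$ excluded. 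This reproduces the two-case description of $\rX_\gamma^{(T,\rho)}$.

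The main obstacle is precisely this interpolation/trace bookkeeping: one must verify that the graph condition $T|_{\Gamma_u}=\rho$ behaves compatibly under real interpolation, so that the interpolated space is genuinely of graph type with $\rho_0 = T_0|_{\Gamma_u}$ rather than a space of independent pairs, and that the Neumann condition is retained as a closed subspace when $p>3$. Once these interpolation identities and trace mapping properties are assembled, the characterization of $\rX_\gamma^{(T,\rho)}$ follows and the maximal-regularity isomorphism supplied by \autoref{lem: max reg A} completes the proof.
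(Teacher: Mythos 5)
Your proposal is correct and takes essentially the same route as the paper: maximal $\rLp$-regularity for $A-\omega$ is deduced directly from the bounded $\mathcal{H}^\infty$-calculus of \autoref{lem: max reg A} (via $\mathcal{R}$-sectoriality and Weis' theorem in the UMD space $\rX_0^{(T,\rho)}$), and the admissible initial data are identified with the real interpolation space $(\rX_0^{(T,\rho)},\D(A))_{1-\nicefrac{1}{p},p}$. Your trace bookkeeping --- the coupling $\rho_0 = T_0|_{\Gamma_u}$ surviving precisely for $p>\nicefrac{3}{2}$ and the Neumann compatibility $(\dz T_0)|_{\Gamma_b}=0$ entering only for $p>3$ --- reproduces exactly the paper's two-case characterization of $\rX_\gamma^{(T,\rho)}$, which the paper itself records with the same brevity (``we readily verify'').
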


Next, to show local well-posedness of \eqref{eq: primitive + EBM simplified} we estimate the non-linear terms in the respective data spaces. For clarity, we define the solution spaces for $T$ and $T|_{\Gamma_u}$ separately, namely,
\begin{equation}\label{eq: max reg spaces}
    \begin{aligned}
        \E^T_{1,\tau} &:= \rH^{1,p}(0,\tau;\rLp(\mathcal{O})) \cap \rLp(0,\tau;\rH_\per^{2,p}(\mathcal{O}))\  \text{ and }\\ \E^\rho_{1,\tau}&:= \rH^{1,p}(0,\tau;\rLp(G)) \cap \rLp(0,\tau;\rH_\per^{2,p}(G)).
    \end{aligned}
\end{equation}

\begin{lem}
    \label{lem: esimates nonlinearities version 2}
    Let $\tau'<1$ and $p \geq 2$. Then there exists $\tau \in (0,\tau']$ as well as a constant $~C>0$, depending only on $p$ and $\tau'$, such that
    \begin{enumerate}
        \item[(i)] $\ \| R(x,\rho) \|_{\rLp((0,\tau) \times G)} \leq C \tau ^{\nicefrac{1}{p}}+ C \| \rho\|^4_{\E_{1,\tau}^\rho}$,
        \item[(ii)] $\| v|_{\Gamma_u} \cdot \nablaH \rho \|_{\rLp((0,\tau) \times G)} \leq C \| v \|_{\E^v_{1,\tau}} \cdot  \| \rho\|_{\E_{1,\tau}^\rho}$ ,
        \item[(iii)] $ \| u \cdot \nabla T \|_{\rLp((0,\tau) \times \mathcal{O})} \leq C \| v \|_{\E^v_{1,\tau}} \cdot \| T \|_{\E^T_{1,\tau}}\ $ and $ \ \|\nablaH \int_0^z T(\cdot,\xi) \d \xi \|_{\rLp((0,\tau)\times \mathcal{O})} \leq C \tau^{\nicefrac{1}{2p}} \| T \|_{\E^T_{1,\tau}}$,
        \item[(iv)] $ \| u \cdot \nabla v \|_{\rLp((0,\tau) \times \mathcal{O})} \leq C \| v \|_{  \E^v_{1,\tau}}^2$,
    \end{enumerate}
    whenever $v\in \E^v_{1,\tau}$, $T \in \E^T_{1,\tau}$, $\rho \in \E^\rho_{1,\tau}$, and $R$ is given as in \eqref{eq: radiation} for $Q \in \rC^1( G)$.
\end{lem}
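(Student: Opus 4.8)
The plan is to reduce each of the four estimates to two ingredients: the anisotropic embeddings available for the maximal regularity class $\E_{1,\tau}$, and Hölder's inequality in space and in time. Throughout I would use that, uniformly for $\tau \in (0,\tau']$, the mixed derivative theorem yields $\E_{1,\tau} \hookrightarrow \rH^{\theta,p}(0,\tau;\rH_\per^{2(1-\theta),p})$ for every $\theta \in [0,1]$, together with the time-trace embedding $\E_{1,\tau} \hookrightarrow \rC([0,\tau];\rX_\gamma)$ and the parabolic Sobolev embeddings into Lebesgue spaces. On the bounded domains $G \subset \R^2$ and $\mathcal{O} \subset \R^3$ all embedding constants can be taken independent of $\tau \leq \tau' < 1$, after a uniformly bounded extension to the fixed interval $(0,\tau')$, as in the constructions of \cite{GGHHK:20b}. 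The two powers of $\tau$ in the statement will be produced by Hölder in time, $\| g \|_{\rLp(0,\tau)} \leq \tau^{\nicefrac{1}{p}-\nicefrac{1}{r}} \| g \|_{\rL^r(0,\tau)}$ for $r > p$, applied to a factor that, thanks to the mixed derivative theorem, enjoys better-than-$\rLp$ integrability in time.

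For (i) I would split $R(x,\rho) = Q(x)\beta(\rho) - |\rho|^3\rho$. Since $\beta$ is bounded, being built from $\tanh$, and $Q \in \rC^1(G)$, the first term lies in $\rL^\infty((0,\tau)\times G)$, so its $\rLp$-norm is at most $C\tau^{\nicefrac{1}{p}}$; this accounts for the first summand. For the cubic term I would use $\| |\rho|^3\rho \|_{\rLp((0,\tau)\times G)} = \| \rho \|_{\rL^{4p}((0,\tau)\times G)}^4$ and the parabolic Sobolev embedding $\E^\rho_{1,\tau} \hookrightarrow \rL^{4p}((0,\tau)\times G)$, which holds for all $p \geq 2$ since $G$ has parabolic dimension $4$ and the critical relation gives $\nicefrac{1}{q} = \nicefrac{1}{p} - \nicefrac{1}{2} \leq 0$. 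The $\tau$-weighted bound in (iii) is similar in spirit: writing $\nablaH \int_0^z T \d\xi = \int_0^z \nablaH T \d\xi$ and noting that the vertical integration is bounded on $\rLp(\mathcal{O})$, it suffices to bound $\| \nablaH T \|_{\rLp((0,\tau)\times\mathcal{O})}$. Choosing $\theta = \nicefrac{1}{2}$ in the mixed derivative theorem gives $\nablaH T \in \rH^{\nicefrac{1}{2},p}(0,\tau;\rLp(\mathcal{O})) \hookrightarrow \rL^{2p}(0,\tau;\rLp(\mathcal{O}))$, and Hölder in time with $r = 2p$ produces exactly the factor $\tau^{\nicefrac{1}{2p}}$.

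Estimate (ii) is the genuinely new coupling term, and I expect it to be the main obstacle, because $v|_{\Gamma_u}$ must be controlled through the \emph{anisotropic} spatial trace of $\E^v_{1,\tau}$ onto $\Gamma_u \cong G$. I would apply Hölder in space first, placing $v|_{\Gamma_u}$ in $\rL^\infty(G)$ and $\nablaH\rho$ in $\rLp(G)$, then Hölder in time. For $p > \nicefrac{5}{2}$ the trace space $\rW_\per^{2-\nicefrac{3}{p},p}(G)$ embeds into $\rL^\infty(G)$, so $v|_{\Gamma_u} \in \rC([0,\tau];\rL^\infty(G))$ and the bound closes with a clean product of the two $\E$-norms. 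The delicate endpoint is $p = 2$, where this embedding fails; there I would instead use the anisotropic trace $v|_{\Gamma_u} \in \rL^2(0,\tau;\rH^{\nicefrac{3}{2},2}(G)) \hookrightarrow \rL^2(0,\tau;\rL^\infty(G))$ together with $\nablaH\rho \in \rC([0,\tau];\rL^2(G))$, which follows from $\rho \in \rC([0,\tau];\rH_\per^1(G))$, and combine them by Hölder in time with exponents $(2,\infty)$. Interpolating between these two regimes covers all $p \in [2,\infty)$.

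Finally, the first part of (iii) and estimate (iv) are the standard quadratic estimates for the primitive equations, and I would follow the scheme of \cite{HK:16, GGHHK:20b}. Writing $u \cdot \nabla = v \cdot \nablaH + w(v)\,\dz$ with $w(v) = -\int_0^z \divH v \d\xi$, the horizontal part $v \cdot \nablaH T$ is handled by Hölder together with the parabolic Sobolev embeddings of $\E^v_{1,\tau}$ and $\E^T_{1,\tau}$ into a high $\rL^q((0,\tau)\times\mathcal{O})$ and the bound $\nablaH T \in \rLp(0,\tau;\rH_\per^{1,p}(\mathcal{O}))$. For the vertical part $w(v)\dz T$ I would exploit that the vertical integration defining $w$ trades the $z$-variable for one horizontal derivative of $v$ without loss, so $w(v)$ inherits the integrability of $\nablaH v$, after which the same Hölder and embedding balancing closes the estimate; (iv) is then the special case $T = v$. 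The hardest point in this last step, as in \cite{HK:16}, is the borderline nature of the three-dimensional anisotropic embeddings at $p = 2$, which forces one to separate the horizontal and vertical directions and use different Lebesgue exponents in each.
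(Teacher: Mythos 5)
Your parts (i), (iii) and (iv) are correct and essentially follow the paper's own argument: the same splitting of $R$, with the quartic term handled through the embedding $\E^\rho_{1,\tau}\hookrightarrow \rL^{4p}((0,\tau)\times G)$ (which the paper obtains via the mixed derivative theorem, exactly the parabolic scaling count you give); the factor $\tau^{\nicefrac{1}{2p}}$ in (iii) produced by giving the term better-than-$\rLp$ integrability in time (the paper interpolates $\| T\|_{\rH^{1,p}}\leq \|T\|_{\rLp}^{\nicefrac{1}{2}}\|T\|_{\rH^{2,p}}^{\nicefrac{1}{2}}$ and pulls $\tau^{\nicefrac{1}{2p}}$ out of the $\rLp$-in-time factor, while you Sobolev-embed $\rH^{\nicefrac{1}{2},p}(0,\tau)\hookrightarrow\rL^{2p}(0,\tau)$ and apply H\"older in time; both are fine); and the convection estimates delegated to \cite{Hieber2016} and \cite{HK:16}, which is precisely what the paper does. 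Your remark about $\tau$-uniform embedding constants via extension to a fixed interval is a real point and is handled the same way implicitly in the paper.

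The genuine issue is (ii). Your two endpoint arguments are each correct: for $p>\nicefrac{5}{2}$ the trace of the time-trace space embeds into $\rL^\infty(G)$, and at $p=2$ the pairing of $v|_{\Gamma_u}\in\rL^2(0,\tau;\rH^{\nicefrac{3}{2}}(G))\hookrightarrow\rL^2(0,\tau;\rL^\infty(G))$ with $\nablaH\rho\in\rC([0,\tau];\rL^2(G))$ closes. But ``interpolating between these two regimes'' is not a proof for $p\in(2,\nicefrac{5}{2}]$: the bilinear map acts between scales of spaces that change with $p$, and a complex bilinear interpolation argument would require identifying the interpolation spaces of the maximal regularity classes, which you have not done; as written, that range is simply uncovered. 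The gap is easily repaired within your own scheme, and in a way that removes the case distinction altogether: for every $p\geq 2$ the trace theorem applied to the $\rLp(0,\tau;\rH^{2,p}_\per(\mathcal{O}))$ component gives $v|_{\Gamma_u}\in\rLp(0,\tau;\rW^{2-\nicefrac{1}{p},p}(G))$, and $\rW^{2-\nicefrac{1}{p},p}(G)\hookrightarrow\rL^\infty(G)$ since $2-\nicefrac{3}{p}\geq\nicefrac{1}{2}>0$; pairing this with $\nablaH\rho\in\rC([0,\tau];\rW^{1-\nicefrac{2}{p},p}(G))\hookrightarrow\rC([0,\tau];\rLp(G))$ and H\"older in time with exponents $(p,\infty)$ yields the claim for all $p\geq2$. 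For comparison, the paper avoids $\rL^\infty$ bounds and any case distinction entirely: it splits by H\"older into an $\rL^{2p}\cdot\rL^{2p}$ product in space and time, uses trace and Sobolev embedding to reduce to $\|v\|_{\rL^{2p}(0,\tau;\rW^{\nicefrac{3}{2p},p}(\mathcal{O}))}\cdot\|\rho\|_{\rL^{2p}(0,\tau;\rH^{1+\nicefrac{1}{p},p}(G))}$, and then invokes the mixed derivative theorem, valid uniformly for $p\geq2$; this route also sidesteps the $\tau$-uniformity of the $\rC([0,\tau];\cdot)$-trace constant, which your fix needs and which must be justified by restricting to functions with vanishing time trace (as in the fixed-point set used in the proof of local well-posedness) or by including the initial datum in the bound.
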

\begin{proof}
(i)    From \eqref{eq: radiation} we deduce that
    \begin{equation*}
        \| R(x,\rho) \|_{\rLp((0,\tau) \times G)} \leq \| Q(x)\beta(\rho) \|_{\rLp((0,\tau) \times G)}+ \| \rho^4 \|_{\rLp((0,\tau) \times G)}.
    \end{equation*}
   H\"older's inequality yields
    $$
     \| Q(x)\beta(\rho) \|_{\rLp((0,\tau) \times G)} \leq C \|Q \|_{\rL^{\infty}(G)} \cdot  \| \beta(\rho) \|_{\rLp((0,\tau) \times G)},
    $$
    where we used that $Q \in \rL^\infty(G)$. Furthermore, by the parametrisation \eqref{eq: coalbedo}, we get
    $$
    \| \beta(\rho) \|_{\rLp((0,\tau) \times G)} \leq C \bigl ( \tau^{\nicefrac{1}{p}} +  \| \tanh(\rho- \rho_{\mathrm{ref}}) \|_{\rLp((0,\tau) \times G)} \bigr  ) \leq C\tau^{\nicefrac{1}{p}},
    $$
    since $|\tanh(x)|\leq 1$ for all $x \in \R$. Next, the mixed derivative theorem and choosing $\theta \in (0,1)$ appropriately imply that $\E^\rho_{1,\tau}\hookrightarrow \rH^{\theta,p}(0,\tau;\rH_\per^{2(1-\theta),p}(G)) \hookrightarrow \rL^{4p}(0,\tau;\rL^{4p}(G))$. Therefore, by H\"older's inequality, 
    \begin{equation*}
        \| \rho^4 \|_{\rLp((0,\tau) \times G)} \leq \| \rho \|^4_{\rL^{4p}(0,\tau;\rL^{4p}(G))} \leq C \| \rho \|^4_{\E^\rho_{1,\tau}}.
    \end{equation*}

(ii) In view of H\"older's inequality, the trace theorem and Sobolev embedding we obtain
\begin{equation*}
    \begin{aligned}
        \| v|_{\Gamma_u} \cdot \nablaH \rho \|_{\rLp((0,\tau) \times G)} &\leq C \| v \|_{\rL^{2p}(0,\tau;\rW^{\nicefrac{1}{2p},2p}(\mathcal{O}))} \cdot \| \rho\|_{\rL^{2p}(0,\tau;\rW^{1,2p}(G))} \\ &\leq C \| v \|_{\rL^{2p}(0,\tau;\rW^{\nicefrac{3}{2p},p}(\mathcal{O}))} \cdot \| \rho\|_{\rL^{2p}(0,\tau;\rW^{1+\nicefrac{1}{p},p}(G))}.
    \end{aligned}
\end{equation*}
Then, the mixed derivative theorem yields 
\begin{equation*}
    \E_{1,\tau}^v \hookrightarrow \rL^{2p}(0,\tau;\rW_\per^{\nicefrac{3}{2p},p,p}(\mathcal{O})), \text{ for }p \geq 1 \ \text{ and } \ \E^\rho_{1,\tau} \hookrightarrow \rL^{2p}(0,\tau;\rH_\per^{1 +\nicefrac{1}{p},p}(G)), \text{ for }p\geq 2.
\end{equation*}
Hence,
\begin{equation*}
     \| v|_{\Gamma_u} \cdot \nablaH \rho \|_{\rLp((0,\tau) \times G)} \leq C \|v \|_{\E^v_{1,\tau}} \cdot \| \rho\|_{\E^\rho_{1,\tau}}.
\end{equation*}

(iii) The first estimate can be deduced as in \cite[Lemma~5.1]{Hieber2016}. Concerning the second term, Jensen's inequality as well as interpolation yields 
\begin{equation*}
     \|\nablaH \int_0^z T(\cdot,\xi) \d \xi \|_{\rLp((0,\tau)\times \mathcal{O})} \leq C \| T \|_{\rLp(0,\tau;\rH^{1,p}(\mathcal{O}))} \leq C \| T \|^{\nicefrac{1}{2}}_{\rLp((0,\tau)\times \mathcal{O})}  \| T \|^{\nicefrac{1}{2}}_{\rLp(0,\tau;\rH^{2,p}( \mathcal{O}))} \leq C \tau^{\nicefrac{1}{2p}} \| T \|_{\E^T_{1,\tau}} 
\end{equation*}

(iv) According to \cite[Lemma~5.1]{HK:16}, we have
$$
\| u \cdot \nabla v \|_{\rLp( \mathcal{O})} \leq C \| v \|^2_{\rH^{1+ \nicefrac{1}{p},p}(\mathcal{O})}.
$$
Taking the $\rL^p$-norm in time and taking advantage of the embedding $\E_{1,\tau}^v \hookrightarrow \rL^{2p}(0,\tau;\rH^{1+\nicefrac{1}{p},p}(\mathcal{O}))$, for $p\geq 2$ , we conclude that 
$$
 \| u \cdot \nabla v \|_{\rLp((0,\tau) \times \mathcal{O})}  \leq C \|v\|^2_{\rL^{2p}(0,\tau;\rH^{1+\nicefrac{1}{p},p}(\mathcal{O}))}\leq C \| v \|_{  \E^v_{1,\tau}}^2.
$$
\end{proof}

\begin{proof}[Proof of \autoref{thm: local wp}(a)]
Let us remark first that in the sequel we implicitly consider the shifted linearized equations in order to exploit \autoref{lem: lin prim} and \autoref{cor: max reg A}. Since the resulting linear term added on the right-hand side can be estimated easily, we do not explicitly mention this shift in the remainder of the proof. Next, observe that \autoref{lem: lin prim} and \autoref{cor: max reg A} guarantee the existence of a reference solution $(v_*, T_*, \rho_*)$ to \eqref{eq: lin primitive} and \eqref{eq: simple temperature model linear} subject to homogeneous right-hand sides and initial data $(v_0,T_0)$. We then define the set 
    \begin{equation*}
        \begin{aligned}
                \mathbb{B}_r:=  \{ (v, T, \rho) \in \E_{1, \tau}  : (v, T, \rho) - (v_*, T_*, \rho_*) \in \prescript{}{0}{\E_{1, \tau}} \  \text{ and } \  \| (v, T, \rho) - (v_*, T_*, \rho_*) \|_ {\E_{1, \tau}} \leq r  \},
        \end{aligned}
    \end{equation*}
    where the prescript $\prescript{}{0}{}$ denotes the space with homogeneous initial conditions. For $(v_1, T_1, \rho_1) \in \mathbb{B}_r,$ we denote by 
    \begin{equation*}
        \Psi (v_1, T_1, \rho_1):=  (v, T, \rho) \in \E_{1,\tau} 
    \end{equation*}
    the solution to the linearized system \eqref{eq: primitive + EBM simplified} where the respective right-hand sides are given by 
    \begin{equation*}
        \nablaH \int_0^z T_1(\cdot, \xi) \d \xi - v_1 \cdot \nablaH v_1 - w(v_1) \cdot \dz v_1, \ -v_1 \cdot \nablaH T_1 - w(v_1) \cdot \dz T_1 \ \text{ and } \ R(x,\rho)- v_1|_{\Gamma_u} \cdot \nablaH \rho.
    \end{equation*}
    Then, by \autoref{lem: lin prim}, \autoref{cor: max reg A} and \autoref{lem: esimates nonlinearities version 2}, the mapping $\Psi$ is well defined on $\mathbb{B}_r$. It remains to show that $\Psi$ maps $\mathbb{B}_r$ into itself and that it is a contraction. Specifically, \autoref{lem: lin prim}, \autoref{cor: max reg A} and \autoref{lem: esimates nonlinearities version 2} imply that 
    \begin{equation*}
        \begin{aligned}
            &\| (v, T, \rho) - (v_*, T_*, \rho_*|) \|_ {\E_{1, \tau}} \\ &\leq C \bigl ( \| \nablaH \int_0^z T_1(\cdot, \xi) \d \xi - v_1 \cdot \nablaH v_1 - w(v_1) \cdot \dz v_1 \|_{\E_{0,\tau}^v} \\& \quad + \| (-v_1 \cdot \nablaH T_1 - w(v_1) \cdot \dz T_1,  R(x,\rho_1)- v_1|_{\Gamma_u} \cdot \nablaH \rho_1) \|_{\E_{0,\tau}^T} \bigr ) \\
            & \leq C \bigl ( \tau^{\nicefrac{1}{2p}}\| T_1 \|_{\E_{1,\tau}^T} + \| v_1 \|^2_{\E^v_{1,\tau}} + \| T_1 \|_{\E_{1,\tau}^T} \cdot  \| v_1 \|_{\E_{1,\tau}^v}+ \tau^{\nicefrac{1}{p}} +\| \rho_1 \|_{\E_{1,\tau}^\rho}^4 + \| v_1 \|_{\E_{1,\tau}^v} \| \rho_1 \|_{\E_{1,\tau}^\rho}   \bigr ) \\
            & \leq C \bigl [   \tau^{\nicefrac{1}{2p}} \bigl (r+ \| T_* \|_{\E_{1,\tau}^T} \bigr) +r^2 + \| v_* \|^2_{\E^v_{1,\tau}} + \|T_*\|_{\E^T_{1,\tau}} \cdot \| v_*\|_{\E^v_{1,\tau}}+ \tau^{\nicefrac{1}{p}} + r^4 + \| \rho_* \|^4_{\E_{1,\tau}^\rho} + \| v_* \|_{\E_{1,\tau}^v}\cdot \| \rho_*\|_{\E^\rho_{1,\tau}}  \bigr )
        \end{aligned}
    \end{equation*}
    Since $\| T_* \|_{\E_{1,\tau}^\rho},$ $\| v_* \|_{\E_{1,\tau}^v}$, $\| \rho_*\|_{\E^\rho_{1,\tau}} \to 0$ as $\tau \to 0$, the corresponding terms can be made arbitrarily small by choosing $\tau>0$ sufficiently small. Hence, for appropriately $\tau$ and $r$, the solution map $\Psi$ is a self-map on $\mathbb{B}_r$. The contraction property follows in the same way.

\end{proof}

\section{Global well-posedness and higher regularity: deterministic setting}\label{sec: global}
Throughout this section, assume $0<\tau< \infty$, $p=2$ and let $(v, T, \rho)$ be a solution to \eqref{eq: primitive + EBM simplified} satisfying
\begin{equation}\label{eq:Assumptionsapriori}
    (v, T, \rho) \in \E_{1,\tau} = \E^v_{1,\tau} \times \E^{(T,\rho)}_{1,\tau} = \E^v_{1,\tau} \times \E^T_{1,\tau} \times \E^\rho_{1,\tau},
\end{equation}
with initial data
\begin{equation}\label{eq:Assumptionsapriori2}
    v_0 \in \rH_\per^{1,2}(\mathcal{O}) \cap \rL^2_{\sigmabar}(\mathcal{O}) \ \text{ as well as } \ T_0  \in \rH_\per^{1,2}(\mathcal{O}) \cap \rL^\infty(\mathcal{O})\ \text{ such that }\ T_0|_{\Gamma_u} \in \rH_\per^{1,2}(G) \cap \rL^\infty(G) .
\end{equation} 
A central component of our analysis is the derivation of uniform temperature bounds via the maximum principle. In the following, we adjust an approach  by Ewald and Temam \cite{Temam2001} to the given
situation. These bounds are established independently of the velocity field. Indeed, the non-linear boundary transport term $ v|_{\Gamma_u} \cdot \nabla_H \rho$ introduces an extra term $ \int_{G} v|_{\Gamma_u} \cdot \nabla_H \rho \cdot \rho$ into the energy estimates required to achieve global well-posedness. Since $v|_{\Gamma_u}$ does not meet the horizontal divergence-free condition, this extra term does not cancel out. Keeping it under control is essential, and we do so by obtaining a uniform $\rL^\infty$ bound for $\rho$. In short, the maximum principle is crucial because it gives us the bounds needed to complete the energy estimates and ultimately extend the solution globally.

\begin{prop}[$\rL^\infty_t \rL^\infty_{\mathrm{x}}$-Estimates for $T$ and $\rho$] \label{prop:linftylinfty} \mbox{}\\
   Let $(v,T,\rho)$ and $(v_0,T_0)$ be as in \eqref{eq:Assumptionsapriori} and \eqref{eq:Assumptionsapriori2}. Then there exists a constant $C>0$, independent of time, such that
    $$
    \sup_{t \in [0,\tau]}\| T(t) \|_{\rL^\infty(\mathcal{O})} + \sup_{t \in [0,\tau]} \| \rho(t) \|_{\rL^\infty(G)}  \leq C. 
    $$
\end{prop}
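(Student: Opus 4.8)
The plan is to establish the bound by a Stampacchia truncation argument, choosing the threshold so that the reaction term carries a favourable sign and exploiting a structural cancellation between the interior vertical flux and the boundary equation. Throughout I use $p=2$, so that $G\subset\R^2$ and the solution $(v,T,\rho)$ has the regularity provided by \autoref{thm: local wp}(a), in particular $v\in\rL^2(0,\tau;\rH^{2}_{\per}(\mathcal{O}))$ and $T,\rho\in\rH^1$ in time. I first fix $M_*:=(\|Q\|_{\rL^\infty(G)}\beta_2)^{\nicefrac{1}{4}}$ and set $M:=\max\{\|T_0\|_{\rL^\infty(\mathcal{O})},\,\|T_0|_{\Gamma_u}\|_{\rL^\infty(G)},\,M_*\}$.

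The core of the argument is to test the interior equation $\partial_t T+u\cdot\nabla T-\Delta T=0$ with $(T-M)_+$ over $\mathcal{O}$ and the boundary equation with $(\rho-M)_+$ over $G$. Since $\div u=0$ and $w|_{\Gamma_u\cup\Gamma_b}=0$, the interior transport term vanishes after integration by parts. The interior diffusion produces $\|\nabla(T-M)_+\|^2_{\rL^2(\mathcal{O})}$ together with a single boundary flux $+\int_{\Gamma_u}(\dz T)|_{\Gamma_u}(\rho-M)_+$, where I use the Neumann condition at $\Gamma_b$, lateral periodicity, and the fact that $(T-M)_+|_{\Gamma_u}=(\rho-M)_+$. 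The boundary equation yields the \emph{same} flux integral with the opposite sign, so upon adding the two identities the coupling fluxes cancel exactly; this is the decisive structural point. With $\psi:=(T-M)_+$ and $\phi:=(\rho-M)_+$ the outcome is
$$\tfrac12\tfrac{d}{dt}\bigl(\|\psi\|^2_{\rL^2(\mathcal{O})}+\|\phi\|^2_{\rL^2(G)}\bigr)+\|\nabla\psi\|^2_{\rL^2(\mathcal{O})}+\|\nablaH\phi\|^2_{\rL^2(G)}=\int_G R(x,\rho)\,\phi+\int_G(\divH v|_{\Gamma_u})\tfrac{\phi^2}{2},$$
the last term arising from integrating the boundary transport $v|_{\Gamma_u}\cdot\nablaH\rho$ by parts on the periodic cell $G$ — exactly the term that fails to vanish because $v|_{\Gamma_u}$ is not horizontally solenoidal. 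On $\{\rho>M\ge M_*\}$ one has $R(x,\rho)=Q(x)\beta(\rho)-\rho^4\le\|Q\|_{\rL^\infty}\beta_2-M_*^4\le0$, so the reaction integral is nonpositive.

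The main obstacle is the surviving transport term. I would control it using that, by trace theory, $\divH v|_{\Gamma_u}\in\rL^2(0,\tau;\rL^2(G))$, together with the two-dimensional inequality $\|\phi\|^2_{\rL^4(G)}\le C\|\phi\|_{\rL^2(G)}\|\phi\|_{\rH^1(G)}$ and Young's inequality to absorb $\|\nablaH\phi\|^2_{\rL^2(G)}$ into the dissipation. This gives
$$\int_G(\divH v|_{\Gamma_u})\tfrac{\phi^2}{2}\le\tfrac12\|\nablaH\phi\|^2_{\rL^2(G)}+g(t)\,\|\phi\|^2_{\rL^2(G)},\qquad g(t):=C\bigl(\|\divH v|_{\Gamma_u}\|^2_{\rL^2(G)}+\|\divH v|_{\Gamma_u}\|_{\rL^2(G)}\bigr)\in\rL^1(0,\tau).$$
Writing $y(t):=\|\psi(t)\|^2_{\rL^2(\mathcal{O})}+\|\phi(t)\|^2_{\rL^2(G)}$, the choice of $M$ forces $y(0)=0$, and the inequality reduces to $y'(t)\le g(t)\,y(t)$; Grönwall then yields $y\equiv0$, i.e. $T\le M$ and $\rho\le M$. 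Notably the bound $M$ depends only on the initial data and on $\|Q\|_{\rL^\infty},\beta_2$, and \emph{not} on the velocity field, which enters only through the $\rL^1$-coefficient $g$ and is annihilated by $y(0)=0$.

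The lower bounds $T\ge-M$, $\rho\ge-M$ follow from the analogous computation applied to $(-T-M)_+$ and $(-\rho-M)_+$, now using that $R(x,\rho)=Q(x)\beta(\rho)+|\rho|^4\ge0$ whenever $\rho\le0$, so the reaction integral again has the favourable sign (here no largeness of $M$ beyond dominating the data is needed). A minor technical point — the chain rule in time for $\|\phi\|^2_{\rL^2}$ and the integration by parts against the Lipschitz truncation — is justified by a standard smooth approximation of $(\cdot)_+$, which is legitimate given the parabolic regularity $T,\rho\in\rH^1$ in time. Combining the upper and lower bounds yields the asserted constant $C$.
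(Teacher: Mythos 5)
Your proof is correct, but it takes a genuinely different route from the paper. The paper adapts the Ewald--Temam pointwise maximum principle: it locates the global maximum of $T$ on $\bar{\mathcal{O}}\times[0,t_1]$ and argues by cases (initial time, $\Gamma_b$ via the Neumann condition, interior via the rescaling $\mathrm{e}^{-\lambda t}T$, and $\Gamma_u$ where the sign structure of $R$ forces $T_{\mathrm{max}}^4\leq\beta_2$), obtaining the constant $\max\{\|T_0\|_{\rL^\infty(\mathcal{O})},\|T_0|_{\Gamma_u}\|_{\rL^\infty(G)},\beta_2^{\nicefrac{1}{4}}\}$ with no recourse to the velocity at all, since $\nablaH\rho$ vanishes at a boundary maximum. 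You instead run a Stampacchia truncation at the threshold $M=\max\{\|T_0\|_{\rL^\infty(\mathcal{O})},\|T_0|_{\Gamma_u}\|_{\rL^\infty(G)},(\|Q\|_{\rL^\infty}\beta_2)^{\nicefrac{1}{4}}\}$; your two structural observations are sound: the coupling fluxes $\int_G(\dz T)|_{\Gamma_u}(\rho-M)_+$ from the interior Green identity and from the dynamic boundary equation cancel exactly, and the surviving term $\int_G(\divH v|_{\Gamma_u})\,\phi^2/2$ (present precisely because $v|_{\Gamma_u}$ is not horizontally solenoidal) is handled via $\divH v|_{\Gamma_u}\in\rL^2(0,\tau;\rL^2(G))$ from trace theory, the 2D Ladyzhenskaya inequality, and Gr\"onwall with $y(0)=0$, so the velocity enters only through an $\rL^1$-coefficient that is annihilated. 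What each approach buys: your integral argument is fully rigorous at exactly the regularity level of \eqref{eq:Assumptionsapriori} ($T\in\rH^1(0,\tau;\rL^2)\cap\rL^2(0,\tau;\rH^2_\per)$, justified by smooth approximation of $(\cdot)_+$), whereas the paper's pointwise case analysis implicitly requires evaluating $T$, $\nabla T$, $\partial_t T$ at a maximum point, i.e.\ classical smoothness, which at $p=2$ is only available a posteriori via parabolic smoothing (cf.\ \autoref{cor:regularity}); conversely, the paper's proof is shorter, gives the velocity-independent bound without any absorption argument, and its minimum-value analysis additionally yields lower-bound information of the form $T_{\mathrm{min}}^4\geq\beta_1$ at a boundary minimum. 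Your tracking of $\|Q\|_{\rL^\infty}$ in the threshold is in fact slightly more careful than the paper's statement $T_{\mathrm{max}}^4\leq\beta_2$, which tacitly normalizes $Q\leq 1$.
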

\begin{rem}
    { \rm The constant in \autoref{prop:linftylinfty} is given by $C=\max \{ \| T_0 \|_{\rL^\infty(\mathcal{O})}, \| T_0|_{\Gamma_u}  \|_{\rL^\infty(G)}, \beta^{\nicefrac{1}{4}}_2 \}$.}
\end{rem}
\begin{proof}

We consider the solution of $(T,\rho= T|_{\Gamma_u}) \in \E_{1,\tau}^T$ of the problems
  \begin{equation}
\left\{
\begin{aligned}
\partial_t T +  u \cdot \nabla T- \Delta T &= 0, \quad &\text{ in }& \mathcal{O} \times (0,\tau), \\
T|_{ \Gamma_u}&=  \rho, \quad &\text{ in }&G \times (0,\tau),\\
(\dz T)|_{\Gamma_b} &=  0, \quad &\text{ in }& G  \times (0,\tau),\\
T(0)& = T_0, \quad & \text{ in }& \mathcal{O} ,
\end{aligned}
\right.
\label{eq: equation temperature T maximum principle}
\end{equation}
and
  \begin{equation}
\left\{
\begin{aligned}
\partial_t \rho + v|_{\Gamma_u} \cdot \nablaH \rho+ (\partial_z T)|_{ \Gamma_u}- \DeltaH \rho &= R(x,\rho), \quad &\text{ in }& G \times (0,\tau), \\
\rho(0)& = T_0|_{\Gamma_u}, \quad & \text{ in }& G,\\
\end{aligned}
\right.
\label{eq: equation temperature  rho maximum principle}
\end{equation}
with $u=(v,w)$ and $v \in \E^v_{1,\tau}$. Let $t_1 \in (0,\tau)$ and consider the solution $(T,\rho)$ of \eqref{eq: equation temperature T maximum principle}-\eqref{eq: equation temperature  rho maximum principle} from time $t= 0$ to time $t = t_1$ with global maximum $T_{\mathrm{max}}$ at some point $(x_0,t_0) \in   \mathcal{O} \times [0,t_1]$. We distinguish between the following cases. 
\begin{enumerate}[(a)]
    \item Assume that $(x_0,t_0)$ is at time $t_0=0$. In this case, it is immediate to check that 
    \begin{equation*}
        T_{\mathrm{max}}  \leq  \max \{ \| T_0 \|_{\rL^\infty(\mathcal{O})}, \| T_0|_{\Gamma_u}  \|_{\rL^\infty(G)} \} .
    \end{equation*}
    \item Assume that $(x_0,t_0)\in \Gamma_b \times (0,t_1]$. Then $(\dz T)|_{(t,x)=(x_0,t_0)} \geq 0$ and it follows from the boundary condition  \eqref{eq: equation temperature T maximum principle}$_3$ that $T_{\mathrm{max}} \leq 0$. Arguing analogously for the minimum value, we conclude that $T=0$. 
    \item Assume that $(x_0,t_0) \in  \mathcal{O} \times (0,t_1]$. Consider the rescaled solution $\Tilde{T} = \mathrm{e}^{-\lambda t} T$, for $\lambda>0$. Then $\Tilde{T}$ satisfies a shifted version of \eqref{eq: equation temperature T maximum principle} and it holds true that
$$
\partial_t \Tilde{T}|_{ (x,t)=(x_0,t_0)}\geq 0, \quad   \nabla \Tilde{T}|_{  ((x,t)=(x_0,t_0)} = 0, \quad \Delta \Tilde{T}|_{(x,t)=(x_0,t_0)} \leq 0.
$$
It follows that $\lambda T_{\mathrm{max}} \leq 0$ and thus that $T=0$. 
\item Lastly, we assume that $(x_0,t_0) \in \Gamma_u \times (0,t_1]$. Since $(x_0,t_0)$ is a maximum point of $T$ it also has to be a maximum point of $\rho$ and we see that
\begin{equation*}
    \begin{aligned}
        \partial_t \rho|_{ (x,t)=(x_0,t_0)} \geq 0, \quad (\partial_z T)|_{(x,t)=(x_0,t_0)} \geq 0, \quad  \nablaH \rho|_{(x,t)=(x_0,t_0)} = 0, \quad \DeltaH \rho|_{(x,t)=(x_0,t_0)} \leq 0.
    \end{aligned}
\end{equation*}
In particular, we conclude that 
\begin{equation*}
    T_{\mathrm{max}}^4 \leq Q(x)\beta(T_{\mathrm{max}})
\end{equation*}
and by the representation \eqref{eq: coalbedo} it follows that $T^4_{\mathrm{max}} \leq \beta_2$. A similar argument concerning the minimum value yields $T_{\mathrm{min}}^4 \geq \beta_1$.
\end{enumerate}

\end{proof}

We are now in position to prove that the energy of the system \eqref{eq: primitive + EBM simplified} can be bounded by a continuous function.

\begin{lem}[Energy estimates for $(v,T,\rho)$]\label{lem:energyestimates}  \mbox{}\\
Let $(v,T,\rho)$ and $(v_0,T_0)$ be as in \eqref{eq:Assumptionsapriori} and \eqref{eq:Assumptionsapriori2}. Then there exists a continuous function $B_1$, depending on $\|v_0\|_{\rH^1(\mathcal{O})}$, $\| T_0 \|_{\rH^1(\mathcal{O}) \cap \rL^\infty(\mathcal{O})}$, $\| T_0|_{\Gamma_u}  \|_{\rH^1(G) \cap \rL^\infty(G)} $ and $t$, such that
\begin{equation*}
\|v \|^2_{2} + \| T \|^2_2 + \| \rho\|^2_2 +  \int_0^t \| \nabla v \|^2_{2} + \| \nabla T \|_2^2 + \| \nablaH \rho\|^2_2 +\| \rho \|^5_5 \d s \leq B_1(t),\ \text{ } t \in [0,\tau].
\end{equation*}
\end{lem}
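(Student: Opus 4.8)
The plan is to derive the estimate by testing each of the three evolution equations in \eqref{eq: primitive + EBM simplified} with its own unknown in $\rL^2$, adding the resulting identities, and exploiting a crucial cancellation together with the uniform bound from \autoref{prop:linftylinfty}. First I would test the velocity equation with $v$. The convective part $\int_{\mathcal{O}} (v\cdot\nablaH v + w(v)\cdot\dz v)\cdot v$ vanishes: writing it as $\tfrac12\int_{\mathcal{O}} u\cdot\nabla|v|^2$ and integrating by parts, the bulk term drops since $u=(v,w)$ is three-dimensionally divergence free by the definition of $w(v)$, and the boundary contributions vanish because $w|_{\Gamma_u\cup\Gamma_b}=0$ and by horizontal periodicity. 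The viscous term yields $\|\nabla v\|_2^2$ with no boundary term, thanks to $(\dz v)|_{\Gamma_u\cup\Gamma_b}=0$, and the pressure term $\int_{\mathcal{O}}\nablaH p_s\cdot v=\int_G\nablaH p_s\cdot\vbar$ vanishes since $\divH\vbar=0$. This leaves only the temperature coupling on the right, which by Minkowski's (Jensen's) inequality obeys $\|\nablaH\int_0^z T\,\d\xi\|_2\le\|\nabla T\|_2$, so Young's inequality gives $|\int_{\mathcal{O}}\nablaH\int_0^z T\,\d\xi\cdot v|\le\varepsilon\|\nabla T\|_2^2+C_\varepsilon\|v\|_2^2$.

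Next I would test the interior temperature equation with $T$: the convective term again vanishes by the divergence-free/boundary structure, and integrating $-\int_{\mathcal{O}}(\Delta T)T$ by parts produces $\|\nabla T\|_2^2$ minus the surface flux $\int_G(\dz T)|_{\Gamma_u}\rho$ (the bottom term drops by $(\dz T)|_{\Gamma_b}=0$, the lateral terms by periodicity, using $T|_{\Gamma_u}=\rho$). Testing the surface equation with $\rho$ gives $\tfrac12\tfrac{\mathrm d}{\mathrm d t}\|\rho\|_2^2+\|\nablaH\rho\|_2^2$, the same flux term $+\int_G(\dz T)|_{\Gamma_u}\rho$ but now on the left, the transport term $\int_G(v|_{\Gamma_u}\cdot\nablaH\rho)\rho$, and the reaction $\int_G R(x,\rho)\rho$. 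Adding the three identities, the two surface-flux terms cancel \emph{exactly}; this is the cancellation that makes the dynamic boundary coupling tractable at the energy level. The reaction contributes $\int_G R(x,\rho)\rho=\int_G Q\beta(\rho)\rho-\|\rho\|_5^5$, and since $Q$ and $\beta$ are bounded, Young's inequality bounds $\int_G Q\beta(\rho)\rho\le\tfrac12\|\rho\|_5^5+C$, so that a full $\tfrac12\|\rho\|_5^5$ survives on the left.

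The main obstacle is the transport term $\int_G(v|_{\Gamma_u}\cdot\nablaH\rho)\rho$, which does not vanish because $v|_{\Gamma_u}$ is not horizontally divergence free. Here I would invoke the uniform bound $\|\rho(t)\|_{\rL^\infty(G)}\le C$ of \autoref{prop:linftylinfty} and estimate $|\int_G(v|_{\Gamma_u}\cdot\nablaH\rho)\rho|\le\|\rho\|_{\rL^\infty(G)}\|v|_{\Gamma_u}\|_{\rL^2(G)}\|\nablaH\rho\|_2$. The trace inequality $\|v|_{\Gamma_u}\|_{\rL^2(G)}^2\le C\|v\|_{\rL^2(\mathcal{O})}\|v\|_{\rH^1(\mathcal{O})}$ together with two applications of Young's inequality then yields a bound $\le\tfrac14\|\nablaH\rho\|_2^2+\tfrac14\|\nabla v\|_2^2+C(1+\|\rho\|_{\rL^\infty(G)}^4)\|v\|_2^2$, where the first two pieces are absorbed into the dissipation on the left and the coefficient is controlled by the maximum-principle constant. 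I emphasize that handling the term in this (non--integrated-by-parts) form is what keeps the trace at the level of $v$ rather than $\nabla v$; an integration by parts would instead require a trace of $\divH v$, costing one derivative that is unavailable at the $\rH^1$ energy level.

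Collecting everything and choosing $\varepsilon$ small, I obtain a differential inequality of the form $\tfrac{\mathrm d}{\mathrm d t}y(t)+(\|\nabla v\|_2^2+\|\nabla T\|_2^2+\|\nablaH\rho\|_2^2+\|\rho\|_5^5)\le C\,y(t)+C$, where $y=\|v\|_2^2+\|T\|_2^2+\|\rho\|_2^2$ and $C$ depends on the data only through the maximum-principle constant from \autoref{prop:linftylinfty}. Grönwall's inequality bounds $y(t)$ by a continuous function of $t$ and of the stated norms of $(v_0,T_0,T_0|_{\Gamma_u})$, and integrating the differential inequality over $[0,t]$ then controls the time integral of the dissipation, yielding the asserted continuous bound $B_1(t)$.
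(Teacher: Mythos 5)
Your proof is correct and follows essentially the same route as the paper: testing with $(v,T,\rho)$, exploiting the cancellation of the convective, pressure and surface-flux terms, controlling the non-cancelling boundary transport term via the $\rL^\infty$-bound of \autoref{prop:linftylinfty} combined with a trace--interpolation--Young argument, and closing with Gronwall. Your only deviations are cosmetic: you bound the pressure--temperature coupling by Jensen's inequality and absorb $\eps\|\nabla T\|_2^2$ instead of integrating by parts and using the $\rL^\infty$-bound on $T$ as the paper does, and you absorb $\int_G Q\beta(\rho)\rho$ into $\tfrac12\|\rho\|_5^5$ by Young rather than bounding it by a constant via the maximum principle.
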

\begin{proof}
    Multiplying \eqref{eq: primitive + EBM simplified}$_1$ by $v$, \eqref{eq: primitive + EBM simplified}$_3$ by $T$, integrating over $\mathcal{O}$ and $G$ respectively, and adding the resulting equations yields, in view of \cite[Lemma~6.3]{HK:16} 
\begin{equation*}
    \begin{aligned}
        &\frac{1}{2}\dt \bigl ( \|v \|^2_2 + \| T \|^2_2 + \| \rho \|^2_2 \bigr) + \| \nabla v \|^2_2 + \| \nabla T \|^2_2 + \|\nablaH \rho \|^2_2 + \| T|_{\Gamma_b} \|^2_2 + \| \rho \|^5_5 \\ =& -\int_\mathcal{O}  \int_0^z T(\cdot,\xi) \d \xi \cdot \divH v + \int_G Q(x)\beta(\rho) \cdot \rho - \int_G v|_{\Gamma_u} \cdot \nablaH \rho \cdot \rho.
    \end{aligned}
\end{equation*}
By H\"older's and Young's inequality we estimate
\begin{equation*}
    \bigl | \int_\mathcal{O}  \int_0^z T(\cdot,\xi) \d \xi \cdot \divH v \bigr | \leq \frac{1}{2} \| T \|^2_2 + \frac{1}{2} \| \nabla v \|^2_2 \leq C +  \frac{1}{2} \|\nabla v \|^2_2,
\end{equation*}
where we have used the embedding $\rL^\infty(\mathcal{O}) \hookrightarrow \rL^2(\mathcal{O})$ as well as \autoref{prop:linftylinfty} in the last step. Next, in view of $Q \in \rL^\infty(G)$ and
\autoref{prop:linftylinfty}, we obtain
\begin{equation*}
    \bigl | \int_G Q(x)\beta(\rho) \cdot \rho \bigr | \leq C \| \beta(\rho) \|_{\rL^1(G)} \leq C,
\end{equation*}
since $\beta$ is bounded. Finally, H\"older's inequality yields 
\begin{equation*}
    \bigl | \int_G v|_{\Gamma_u} \cdot \nablaH \rho \cdot \rho \bigr | \leq \| v|_{\Gamma_u} \|_2 \cdot \| \nablaH \rho \|_2 \cdot \| \rho \|_\infty \leq C \| v|_{\Gamma_u} \|_2 \cdot \| \nablaH \rho \|_2.
\end{equation*}
The velocity can be estimated by the trace theorem and an interpolation inequality, i.\ e.,
\begin{equation*}
    \| v|_{\Gamma_u} \|_{\rL^2(G)} \leq C \| v \|_{\rW^{\nicefrac{1}{2}}(\mathcal{O})} \leq C \| \nabla v \|^{\nicefrac{1}{2}}_{2} \cdot \| v \|^{\nicefrac{1}{2}}_2 
\end{equation*}
and we conclude using Young's inequality that 
\begin{equation*}
     \| v|_{\Gamma_u} \|_2 \cdot \| \nablaH \rho \|_2 \leq \frac{1}{4}\| \nabla v \|^2_2 + C \| v \|^2_2 + \frac{1}{2} \|\nablaH \rho \|_2^2.
\end{equation*}
Summarizing, there exists a constant $C>0$ such that 
\begin{equation*}
    \begin{aligned}
        \frac{1}{2}\dt \bigl ( \|v \|^2_2 + \| T \|^2_2 + \| \rho \|^2_2 \bigr) +\frac{1}{4} \| \nabla v \|^2_2 + \| \nabla T \|^2_2 + \frac{1}{4}\|\nablaH \rho \|^2_2 + \| T|_{\Gamma_b} \|^2_2 + \| \rho \|^5_5  \leq  C +C \| v \|_2^2.
    \end{aligned}
\end{equation*}
Integrating in time and applying Gronwall's inequality yields the claim.
\end{proof}

\begin{prop}[$\rL_t^\infty \rH^{1}_{\mathrm{x}}$-$\rL^2_t \rH^{2}_{\mathrm{x}}$-Estimates] \label{prop:linftyl2} \mbox{}\\
Let $(v,T,\rho)$ and $(v_0,T_0)$ be as in \eqref{eq:Assumptionsapriori} and \eqref{eq:Assumptionsapriori2}. Then there exists a continuous function $B_2$ on $[0,\tau]$, depending on $B_1$ and $t$, such that
\begin{equation*}
    \| \nabla v \|^2_{2} +  \| \nabla T \|^2_{2} + \| \nablaH \rho \|^2_{2} + \int_0^t \| \Delta v \|^2_{2} + \| \Delta T \|^2_2 + \| \DeltaH  \rho  \|^2_{2} \d s \leq B_2(t), \ \text{ for }\ t \in [0,\tau].
\end{equation*}
\end{prop}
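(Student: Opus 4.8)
The plan is to run a second-order energy estimate for the coupled system, testing each equation in \eqref{eq: primitive + EBM simplified} against the appropriate second-order quantity, exploiting the structure of the dynamic boundary condition to cancel the coupling terms, controlling the nonlinearities by means of the uniform $\rL^\infty$-bounds from \autoref{prop:linftylinfty} together with the first-order bounds collected in $B_1$ from \autoref{lem:energyestimates}, and finally assembling everything into a single differential inequality to which Gronwall's lemma applies. I would proceed in two stages, first for the velocity and then for the temperature/surface pair, since the velocity estimate closes using only $B_1$ and then feeds into the temperature estimate.

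\emph{Velocity.} Testing \eqref{eq: primitive + EBM simplified}$_1$ with $-\Delta v$ and integrating over $\mathcal{O}$, the transport terms $v\cdot\nablaH v + w(v)\dz v$ are handled exactly as in the $\rH^1$-theory for the primitive equations (the Cao--Titi method, in the form of \cite{CT:07,HK:16,Hieber2016}). The surface pressure drops out: since $p_s$ is independent of $z$, integrating the pressure term in $z$ and using $\dz v|_{\Gamma_u\cup\Gamma_b}=0$ gives $\int_0^1(-\Delta v)\d\xi=-\DeltaH\vbar$, whence
\begin{equation*}
\int_{\mathcal{O}}\nablaH p_s\cdot(-\Delta v)=-\int_G \nablaH p_s\cdot\DeltaH\vbar=\int_G p_s\,\DeltaH(\divH\vbar)=0,
\end{equation*}
by $\divH\vbar=0$ and horizontal periodicity. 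The forcing is controlled through $\|\nablaH\int_0^z T\d\xi\|_2\le\|\nablaH T\|_2$, whose square is integrable in time by $B_1$. A Gronwall argument then bounds $\sup_{[0,\tau]}\|\nabla v\|_2^2$ and $\int_0^\tau\|\Delta v\|_2^2$ in terms of $B_1$ and the data.

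\emph{Temperature and surface.} This is the genuinely new part. I would test \eqref{eq: primitive + EBM simplified}$_3$ with $-\Delta T$ over $\mathcal{O}$ and \eqref{eq: primitive + EBM simplified}$_5$ with $\dt\rho$ over $G$, and add. Green's identity for the first, using $(\dz T)|_{\Gamma_b}=0$ and periodicity, produces the single boundary term $-\int_G(\dz T)|_{\Gamma_u}\,\dt\rho$, while testing the $\rho$-equation with $\dt\rho$ produces $+\int_G(\dz T)|_{\Gamma_u}\,\dt\rho$; these cancel upon addition, leaving
\begin{equation*}
\tfrac12\dt\bigl(\|\nabla T\|_2^2+\|\nablaH\rho\|_2^2\bigr)+\|\Delta T\|_2^2+\|\dt\rho\|_2^2=\int_{\mathcal{O}}(u\cdot\nabla T)\,\Delta T-\int_G(v|_{\Gamma_u}\cdot\nablaH\rho)\,\dt\rho+\int_G R(x,\rho)\,\dt\rho.
\end{equation*}
The first term is the primitive-equations temperature nonlinearity, estimated by anisotropic (Ladyzhenskaya-type) inequalities as in \cite{HK:16}, absorbing a small multiple of $\|\Delta T\|_2^2$ and leaving factors controlled by the first-stage velocity bound. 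For the boundary transport I would use the trace embedding $\|v|_{\Gamma_u}\|_{\rL^4(G)}\lesssim\|v\|_{\rH^1(\mathcal{O})}$ and the two-dimensional Gagliardo--Nirenberg bound $\|\nablaH\rho\|_{\rL^4(G)}\lesssim\|\nablaH\rho\|_2^{1/2}\|\DeltaH\rho\|_2^{1/2}$; reading $\DeltaH\rho=\dt\rho+v|_{\Gamma_u}\cdot\nablaH\rho+(\dz T)|_{\Gamma_u}-R$ off the equation and absorbing the quadratic contribution yields $\|\DeltaH\rho\|_2\lesssim\|\dt\rho\|_2+\|v\|_{\rH^1}^2\|\nablaH\rho\|_2+\text{l.o.t.}$, after which Young's inequality absorbs $\|\dt\rho\|_2^2$ at the cost of a coefficient $C\|v\|_{\rH^1}^4$ on $\|\nablaH\rho\|_2^2$. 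Finally $\int_G R(x,\rho)\,\dt\rho\le\tfrac14\|\dt\rho\|_2^2+C$ because $\|\rho\|_\infty\le C$ by \autoref{prop:linftylinfty} and $Q,\beta$ are bounded, so $\|R(\cdot,\rho)\|_2\le C$.

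\emph{Closing the estimate and the main obstacle.} Adding the two stages gives $\dt y+c(\|\Delta v\|_2^2+\|\Delta T\|_2^2+\|\dt\rho\|_2^2)\le\Phi(t)\,y+\Psi(t)$ for $y=\|\nabla v\|_2^2+\|\nabla T\|_2^2+\|\nablaH\rho\|_2^2$, where $\Phi,\Psi$ are built from $B_1$, the first-stage velocity bounds (so that $\int_0^\tau\|v\|_{\rH^1}^4<\infty$) and the $\rL^\infty$-bounds, and are integrable on $[0,\tau]$. Gronwall's inequality produces the asserted $\rL^\infty_t\rH^1_x$-bound $\sup_{[0,\tau]}y\le B_2$, and integrating back recovers $\int_0^\tau(\|\Delta v\|_2^2+\|\Delta T\|_2^2)$; the remaining $\int_0^\tau\|\DeltaH\rho\|_2^2$ then follows from the relation for $\DeltaH\rho$ above, the trace bound $\|(\dz T)|_{\Gamma_u}\|_2\lesssim\|\Delta T\|_2+\text{l.o.t.}$ being integrable. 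I expect the main obstacle to be the simultaneous control of the two transport nonlinearities: the interior term $\int_{\mathcal{O}}(u\cdot\nabla T)\,\Delta T$, because of the nonlocal vertical velocity $w(v)$, and the surface term $\int_G(v|_{\Gamma_u}\cdot\nablaH\rho)\,\dt\rho$, which couples a trace of $v$ to second derivatives of $\rho$ and must be split carefully so that the coercive quantities $\|\Delta T\|_2^2$ and $\|\dt\rho\|_2^2$ can be absorbed rather than merely bounded.
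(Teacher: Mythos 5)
Your proposal is correct, and the velocity stage coincides with the paper's (both defer the $\rH^1$--$\rH^2$ bound for $v$ to the known primitive-equations theory, the paper citing \cite{GGHHK:20b} once the forcing $\nablaH\int_0^z T\d\xi$ is in $\rL^2_t\rL^2_x$ by $B_1$), but your temperature stage takes a genuinely different route. The paper multiplies \eqref{eq: primitive + EBM simplified}$_5$ by $\dt\rho-\DeltaH\rho$ (its displayed inequality carries $\|\dt\rho\|_2^2$, $\|\DeltaH\rho\|_2^2$ \emph{and} $\tfrac15\dt\|\rho\|_5^5$ on the left, the quintic term coming from testing $-|\rho|^3\rho$ against $\dt\rho$), so the boundary coupling does \emph{not} cancel: the residual cross term $\int_G(\dz T)|_{\Gamma_u}\DeltaH\rho$ survives and is estimated by the trace bound $\|\nabla T\|_{\rW^{\nicefrac12}(\mathcal{O})}\leq C\|\nabla T\|_2^{\nicefrac12}\|\nabla T\|_{\rH^1}^{\nicefrac12}$ plus $\eps$-absorption into $\|\Delta T\|_2^2$ and $\|\DeltaH\rho\|_2^2$; the surface transport is handled with $\|v|_{\Gamma_u}\|_\infty\leq C\|v\|_{\rH^2(\mathcal{O})}$, using the time-integrability of $\|v\|_{\rH^2}^2$ from the velocity stage. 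You instead test with $\dt\rho$ alone, exploit the exact cancellation of $\pm\int_G(\dz T)|_{\Gamma_u}\dt\rho$ (which is legitimate in the solution class, since $\dt T|_{\Gamma_u}=\dt\rho$), and recover $\|\DeltaH\rho\|_2$ a posteriori from the elliptic relation given by the equation, with a quadratic self-absorption; you also discard the quintic energy by using the uniform bound $\|\rho\|_\infty\leq C$ of \autoref{prop:linftylinfty} to get $\|R(\cdot,\rho)\|_2\leq C$ outright. What each buys: the paper gets $\|\DeltaH\rho\|_2^2$ coercively in one pass and never needs the elliptic bootstrap, at the price of the borderline trace/interpolation estimate for the cross term; your cancellation removes that term entirely and your $\rL^4$-trace treatment of $v|_{\Gamma_u}$ needs only $v\in\rL^\infty_t\rH^1_x$ rather than the $\rL^2_t\rH^2_x$ weight, at the price of the a posteriori step for $\DeltaH\rho$ --- where, note, you still need a trace of $\dz T$, so the trace machinery is only displaced, not avoided (use the one-dimensional trace in $z$, $\|(\dz T)|_{\Gamma_u}\|_{\rL^2(G)}^2\leq C\|\dz T\|_2(\|\dz T\|_2+\|\dz^2 T\|_2)$, to stay clear of the failing $\rH^{\nicefrac12}$-trace endpoint). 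Two cosmetic points: the Gagliardo--Nirenberg bound $\|\nablaH\rho\|_4\lesssim\|\nablaH\rho\|_2^{\nicefrac12}\|\DeltaH\rho\|_2^{\nicefrac12}$ requires a lower-order term $+\|\nablaH\rho\|_2$ in the periodic setting, and your pressure-elimination identity, while correct, is subsumed in the cited primitive-equations arguments anyway.
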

\setcounter{stp}{0}
\begin{proof}
    The proof is divided into two steps.
    \begin{step}[$\rL_t^\infty \rH^{1}_{\mathrm{x}}$-$\rL^2_t \rH^{2}_{\mathrm{x}}$-bounds of the velocity] \mbox{}\\
      {\rm  By \autoref{lem:energyestimates}, we have
        \begin{equation*}
            \|\nablaH \int_0^z T(\cdot,\xi) \d \xi \|_{ \rL^2(0,\tau;\rL^2(\mathcal{O}))} \leq B_1.
        \end{equation*}
        Then, by the arguments in \cite[Section~6]{GGHHK:20b}, there exists a continuous function $B_3(t)$ depending on $\| v_0 \|_{\rH^1(\mathcal{O})}$, $B_1$, and $t$, such that
        \begin{equation*}
            \| \nabla v \|^2_2 + \int_0^t \| \Delta v \|^2_2 \d s \leq B_3(t), \ \text{ for } t \in [0,\tau].
        \end{equation*} }
    \end{step}
    \begin{step}[$\rL_t^\infty \rH^{1}_{\mathrm{x}}$-$\rL^2_t \rH^{2}_{\mathrm{x}}$-bounds of the temperature] \mbox{}\\
    { \rm Multiplying \eqref{eq: primitive + EBM simplified}$_3$ by $-\Delta T$, \eqref{eq: primitive + EBM simplified}$_5$ by $-\DeltaH \rho$, integrating over $\mathcal{O}$ and $G$ respectively, and adding the resulting equations yields
    \begin{equation*}
        \begin{aligned}
            &\dt \bigl ( \frac{1}{2}\| \nabla T \|^2_2   + \| \nablaH \rho \|^2_2+ \frac{1}{5} \| \rho \|^5_5 \bigr ) + \| \dt \rho \|^2_2 + \| \Delta T \|^2_2 + \| \DeltaH \rho \|^2_2 \\ &\le 
            \int_\mathcal{O} u \cdot \nabla T \cdot \Delta T + \int_G \bigl ( v|_{\Gamma_u} \cdot \nablaH \rho +Q(x) \beta(\rho) \bigr ) \cdot \bigl ( \dt \rho + \DeltaH \rho \bigr ) + \int_G (\dz T)|_{\Gamma_u}\cdot  \DeltaH \rho 
        \end{aligned}
    \end{equation*}
        The first term on the right-hand side can be estimated as the corresponding term in  \cite[Section~6]{Hieber2016}, i.\ e.,
        \begin{equation*}
            \bigl |  \int_\mathcal{O} u \cdot \nabla T \cdot \Delta T \bigr | \leq C \cdot \| v \|^2_{\rH^2(\mathcal{O})} \cdot \| \nabla T \|^2_2 + \frac{1}{2} \| \Delta T \|^2_2 .
        \end{equation*}
        Next, using H\"older's we obtain 
        \begin{equation*}
            \begin{aligned}
                \bigl |  \int_G v|_{\Gamma_u} \cdot \nablaH \rho \cdot \bigl ( \dt \rho + \DeltaH \rho \bigr ) \bigr | \leq \| v|_{\Gamma_u}\|_\infty \cdot \| \nablaH \rho\|_2 \cdot \bigl( \|  \dt \rho \|_2 + \| \DeltaH \rho\|_2 \bigr ).
            \end{aligned}
        \end{equation*}
        The embedding $\rW^{\nicefrac{3}{2}}(G) \hookrightarrow \rL^\infty(G)$ and trace theorem yield $   \| v|_{\Gamma_u}\|_\infty \leq C \| v \|_{\rH^2(\mathcal{O})}$, and Young's inequality implies
        \begin{equation*}
            \begin{aligned}
                 \| v|_{\Gamma_u}\|_\infty \cdot \| \nablaH \rho\|_2 \cdot \bigl( \|  \dt \rho \|_2 + \| \DeltaH \rho\|_2 \bigr ) \leq C \| v \|^2_{\rH^2(\mathcal{O})}  \cdot \| \nablaH \rho\|^2_2 + \frac{1}{4} \bigl (  \|  \dt \rho \|^2_2 + \| \DeltaH \rho\|^2_2 \bigr ).
            \end{aligned}
        \end{equation*}
        Similarly, we estimate
        \begin{equation*}
            \begin{aligned}
                \bigr | \int_G Q(x) \beta(\rho) \cdot \bigr (\dt \rho + \DeltaH \rho \bigl ) \bigl | &\leq C \| Q(x) \beta(\rho) \|^2_2 + \frac{1}{4}  \bigl (  \|  \dt \rho \|^2_2 + \| \DeltaH \rho\|^2_2 \bigr ) \\ &\leq C + \frac{1}{4}  \bigl (  \|  \dt \rho \|^2_2 + \| \DeltaH \rho\|^2_2 \bigr ),
            \end{aligned}
        \end{equation*}
        where $\| Q(x) \beta(\rho) \|^2_2 \leq C$ by the boundedness of $Q$ and $\beta$ respectively. Concerning the last term, H\"older's inequality and trace theorem yield
        \begin{equation*}
            \bigl | \int_G (\dz T)|_{\Gamma_u}\cdot \DeltaH \rho  \bigr | \leq \| \nabla T \|_{\rW^{\nicefrac{1}{2}}(\mathcal{O})} \cdot \|  \DeltaH \rho \|_2.
        \end{equation*}
        Here, we estimate the full temperature $T$ using an interpolation inequality to obtain
        \begin{equation*}
            \begin{aligned}
                \| \nabla T \|_{\rW^{\nicefrac{1}{2}}(\mathcal{O})} \leq C \| \nabla T \|^{\nicefrac{1}{2}}_{\rH^1(\mathcal{O})} \cdot \| \nabla T \|^{\nicefrac{1}{2}}_2 \leq C \bigl ( \| \nabla T \|_2 + \| \nabla T \|^{\nicefrac{1}{2}}_2 \cdot \| \Delta T \|^{\nicefrac{1}{2}}_2 \bigl ).
            \end{aligned}
        \end{equation*}
        Young's inequality implies
        \begin{equation*}
            \| \nabla T \|_{\rW^{\nicefrac{1}{2}}(\mathcal{O})} \cdot \|  \DeltaH \rho \|_2 \leq C(\eps,\eps') \| \nabla T \|^2_2 + (\eps + \eps') \|  \DeltaH \rho \|_2^2  + \eps' \| \Delta T \|^2_2,
        \end{equation*}
        for any $\eps$, $\eps'>0$. Note that the constant $C(\eps,\eps')$ blows up when $\eps$, $\eps'\to 0$. Choosing $\eps$, $\eps'$ appropriately small, we conclude that there exists a constant $C>0$ such that 
         \begin{equation*}
        \begin{aligned}
            &\dt \bigl ( \frac{1}{2}\| \nabla T \|^2_2 +\frac{1}{2} \| T|_{\Gamma_b}\|^2_2   + \| \nablaH \rho \|^2_2+ \frac{1}{5} \| \rho \|^5_5 \bigr ) +\frac{3}{4} \| \dt \rho \|^2_2 +\frac{1}{2}^- \| \Delta T \|^2_2 +\frac{1}{2}^- \| \DeltaH \rho \|^2_2 \\ &\leq C \bigl (  \| v \|^2_{\rH^2(\mathcal{O})}  \cdot  \| \nabla T \|^2_2  + \| v \|^2_{\rH^2(\mathcal{O})}  \cdot \| \nablaH \rho\|^2_2  +1 \bigr ).      
        \end{aligned}
    \end{equation*}
    Integrating in time and applying Gronwall's inequality yield the claim. 
        }
    \end{step}
\end{proof}
We are now in position to prove the global existence of the solutions.

\begin{proof}[Proof of \autoref{thm: local wp}(b)]
By an iterative application of \autoref{thm: local wp}(a), local solutions $(v,T,\rho)$ of \eqref{eq: primitive + EBM simplified} exists on a maximal time interval $J_{\mathrm{max}}=[0,a_{\mathrm{max}})$. If $a_{\mathrm{max}} < \tau$, i.\ e., the solution does not exist globally, the maximal existence time $a_{\mathrm{max}}$ is characterized by the condition
\begin{equation}\label{rem:max time ex}
       \lim\limits_{\tau \to a_\mathrm{max}} \| ( v,T,\rho ) \|_{\E_{1,\tau}}  = \infty.
\end{equation}
Therefore, to prove global well-posedness for $p=2$, we show that the bounds derived in \autoref{prop:linftyl2} are sufficient to control the maximal regularity norm of the solution $(v,T,\rho)$. By maximal $\rLp$-regularity, there exists a constant $C>0$ such that, for $\tau < a_{\mathrm{max}}$, it holds 
\begin{equation*}
  \begin{aligned}
     \| (v,T,\rho) \|_{\E_{1,\tau}} &\leq  
    C \bigl ( \| \nablaH \int_0^z T(\cdot, \xi) \d \xi - v \cdot \nablaH v - w(v) \cdot \dz v \|_{\E_{0,\tau}^v} \\& \quad + \| (-v \cdot \nablaH T - w(v) \cdot \dz T,  R(x,\rho)- v|_{\Gamma_u} \cdot \nablaH \rho) \|_{\E_{0,\tau}^T} \bigr ).
  \end{aligned}
\end{equation*}
Concerning the convection term in the primitive equations as well as the transport term in the temperature equation, we verify that 
\begin{equation*}
    \begin{aligned}
        \| v \cdot \nablaH v + w(v) \cdot \dz v \|^2_{\E_{0,\tau}^v} &\leq C \left \|v \right \|^2_{\rL^\infty(0,\tau;\rH^{1}(\mathcal{O}))} \cdot  \left \|v \right \|^2_{\rL^2(0,\tau;\rH^{2}(\mathcal{O}))} \ \text{ and }\\ \| u \cdot \nabla T \|^2_{\rLp((0,\tau)\times \mathcal{O})} &\leq C \bigl( \left \|v \right \|^2_{\rL^\infty(0,\tau;\rH^{1}(\mathcal{O}))} +  \left \|T \right \|^2_{\rL^2(0,\tau;\rH^{2}(\mathcal{O}))}\bigr ) \cdot \bigl (  \left \|T \right \|^2_{\rL^\infty(0,\tau;\rH^{1}(\mathcal{O}))} +  \left \|v \right \|^2_{\rL^2(0,\tau;\rH^{2}(\mathcal{O}))} \bigr ).
    \end{aligned}
\end{equation*}
Moreover, $\| \nablaH \int_0^z T(\cdot, \xi) \d \xi \|_{\E_{0,\tau}^v} \leq C\| T \|_{\rL^2(0,\tau;\rH^1(\mathcal{O}))}$ and it remains to bound $R(x,\rho)$ as well as $v|_{\Gamma_u} \cdot \nablaH \rho$. Arguing as in the proof of \autoref{lem: esimates nonlinearities version 2} and using the embedding $\rH^1(G) \hookrightarrow \rL^8(G)$ yields
\begin{equation*}
       \begin{aligned}
           \| \rho^4 \|_{\rL^2((0,\tau) \times G)} \leq C \| \rho\|^4_{\rL^{8}(0,\tau;\rL^{8}(G))} \le C \| \rho\|^4_{\rL^{\infty}(0,\tau;\rH^{1}(G))}. 
       \end{aligned}
\end{equation*}
Moreover, by the boundedness of $Q$ and $\beta$ it is immediate that
    \begin{equation*}
          \| Q(x)\beta(\rho) \|_{\rL^2((0,\tau) \times G)} \leq C.
    \end{equation*}
    Finally, by similar methods as in \autoref{lem: esimates nonlinearities version 2} we obtain for the transport term on the surface 
    \begin{equation*}
        \| v|_{\Gamma_u} \cdot \nablaH  \rho \|_{\rL^2((0,\tau) \times G)} \leq   C \| v \|_{\rL^\infty(0,\tau;\rH^{\nicefrac{3}{4}}(G))} \cdot \|   \nablaH\rho\|_{\rL^2(0,\tau;\rW^{\nicefrac{1}{2}}(G))} \leq C \| v \|_{\rL^\infty(0,\tau;\rH^1(G))} \cdot \|   \rho\|_{\rL^2(0,\tau;\rH^{2}(G))}.
    \end{equation*}
Using \autoref{prop:linftyl2} we conclude that there exists a constant $C>0$ such that
    \begin{equation*}
         \| (v,T,\rho) \|_{\E_{1,\tau}} \leq C,
    \end{equation*}
    and hence global existence within the $\rL^2$-framework follows.

    In order to proof global existence for general $p\geq 2$ consider the local solution $(v,T,\rho)\in \E_{1,\tau}$ for initial data $(v_0,T_0)$ satisfying assumption (A) as well as $(v_0,T_0,T_0|_{\Gamma_u}) \in \rL^\infty(\mathcal{O}) \times \rL^\infty(\mathcal{O}) \times \rL^\infty(G)$. Here, $\tau$ denotes the  maximal time of existence characterized by \eqref{rem:max time ex}. The smoothing property of
    parabolic equations implies that $(v(\delta),T(\delta))$ satisfies \eqref{eq:Assumptionsapriori2}  all $0<\delta <\tau$. Hence, the solution $(v,T,\rho)$ exists globally within the $\rL^2$-framework. Performing a bootstrap argument as in \cite[Section~6.2]{Hieber2016} yields that the solution also exists globally within the $\rLp$-framework.
\end{proof}

\subsection{Higher regularity} \mbox{}\\
The remainder of this section is dedicated to the proof of \autoref{cor:regularity}. Note that the assertions in \autoref{cor:regularity} are based on Angenent's parameter trick, see \cite{A:90}.

\begin{proof}[Proof of \autoref{cor:regularity}]
To simplify our notation, we only consider $G = \R^2$ and identify $\mathcal{O}$ with $G \times (0,1)$. We start by showing interior regularity of the global, unique solutions $v$ and $T$ of \eqref{eq: primitive + EBM simplified} subject to \eqref{eq:bc}. For this purpose, we fix $(\tau_0,x_0) \in (0,\tau) \times \mathcal{O}$ and recall that regularity is a local property, so we need only to show regularity of $(v,T)$ in $(\tau_0-r, \tau_0+r) \times \rB_r(x_0)$, for $r$ small enough. Choose $R>0$ such that $\rB_{3R}(x_0) \in \mathcal{O}$, and let $\zeta$ be a smooth cut-off function satisfying $\zeta(x) =1$ for $|x-x_0|<R$, $\zeta(x) =0$ for $|x-x_0|>2R$, and $0 \leq \zeta(x) \leq 1$ elsewhere. Define the truncated shift by
   \begin{align*}
       \theta_{\lambda,\xi} (t,x) := (t+ \lambda t, x+ t\xi \zeta(x)) \ \text{ for }\  (t,x) \in \mathcal{O} \times (0,\tau).
   \end{align*}
Then $\theta_{\lambda,\xi} \colon (0,\tau) \times \mathcal{O}$ is a diffeomorphism of class $\rC^\infty$, so that the map 
\begin{equation*}
\theta \colon (-r,r) \times \rB_r(0) \to \mathrm{Diff}^\infty((0,\tau) \times \mathcal{O})
\end{equation*}
is well-defined for $r$ sufficiently small. Next, we see that the push-forward operator $\Theta$ defined by 
    \begin{align*}
        \Theta_{\lambda,\xi} \binom{v}{T}
        := \binom{v \circ \theta_{\lambda,\xi}}{T \circ \theta_{\lambda,\xi}} \ \text{ for }\ (\lambda,\xi) \in  (-r,r) \times \rB_r(0)
    \end{align*}
is an isomorphism on $\E^v_{1,\tau} \times \E^{T}_{1,\tau}$, provided $r$ is small enough. Finally, define $\tilde{\rH}\colon \E_{1,\tau} \to \E_{0,\tau} \times \F_\tau \times \rX_\gamma$ such that $\tilde{\rH}(v,T,\rho)= (0,0,0,0,0,0,v_0,T_0)$ is equivalent to \eqref{eq: primitive + EBM simplified} and 
\begin{equation*}
    \rH(\lambda,\xi, v,T,\rho) := \Theta_{\lambda,\xi}\tilde{\rH}(\Theta^{-1}_{\lambda,\xi}(v,T)^\top, \rho).
\end{equation*}
Here, $\F_\tau$ denotes the optimal space concerning the boundary data of $v$ and $T$ given by $\F_\tau = (\F^v_\tau)^2 \times \F^T_\tau$ where
\begin{equation*}
    \F^v_\tau = \rW^{\nicefrac{1}{2}-\nicefrac{1}{2p},p}(0,\tau;\rLp(G;\R^2)) \cap \rLp(0,\tau;\rW_\per^{1-\nicefrac{1}{p},p}(G;\R^2)) 
\end{equation*}
and $\F^T_\tau$ is defined similarly (for scalar valued functions). Note that $\rH\in \rC^1$ and therefore consider the Fr\'echet derivative of $\rH$ with respect to $(v,T)$ evaluated at $(0,0,\hat{v},\hat{T},\rho)$, i.\ e.,
\begin{equation*}
    \D_{(v,T)}\rH(0,0,\hat{v},\hat{T},\rho) \colon \E_{1,\tau} \to \E_{0,\tau} \times \F_\tau \times \rX_\gamma.
\end{equation*}
Then $\rH(0,0,\hat{v},\hat{T},\rho) =  (0,0,0,0,0,0,v_0,T_0)$ and $  \D_{(v,T)}\rH(0,0,\hat{v},\hat{T},\rho)$ is again an isomorphism by the considerations in \autoref{sec: lin and loc} and since the maps corresponding to the non-linear terms are sufficiently smooth. Indeed, by \cite[Lemma~6.4]{GGHHK:20b} \etz the maps
\begin{equation*}
    \begin{aligned}
        \mathcal{F}^v \colon \E^v_{1,\tau} \to \E^v_{0,\tau}, \ v \mapsto v \cdot \nablaH v + w(v) \cdot \dz v   \text{ and } \mathcal{F}^T \colon \E^v_{1,\tau} \times \E^T_{1,\tau} \to \E^T_{0,\tau}, \ (v,T) \mapsto v \cdot \nablaH T + w(v) \cdot \dz T 
     \end{aligned}
\end{equation*}
are continuously differentiable and even real analytic. Hence, by the implicit function theorem, there exists $\delta >0$ and a function  $\Phi\in \rC^\infty((-\delta, \delta) \times \rB_\delta(0); \E^v_{1,\tau} \times \E^T_{1,\tau})$ such that 
\begin{equation*}
    \rH(\lambda,\xi,\Phi(\lambda,\xi),\rho)=0,
\end{equation*}
for all $\lambda \in (-\delta, \delta)$ and $\xi \in \rB_\delta(0)$. Now it holds true that
\begin{equation*}
    (\lambda,\xi) \mapsto \binom{v}{T}(\tau_0+\lambda,x_0 +\tau_0\xi)
\end{equation*}
is of class $\rC^\infty$ near $(\tau_0,x_0)$. Thus, we have $v, T \in \rC^\infty((0,\tau ) \times \mathcal{O})$. Noticing that $\Phi$ is also of class $\rC^\omega$, we deduce in a similar way that $v$ and $T$ are real analytic.

In order to show that $\rho \colon G \to \R$ is smooth and real analytic we fix a point in the upper boundary, i.\ e., $(\tau_0,x_{0},1) \in (0,\tau) \times G$ and show regularity of $\rho$ in a neighbourhood of $(\tau_0,x_{0},1)$. Note that the map 
\begin{equation*}
      \mathcal{F}^\rho \colon \E^v_{1,\tau} \times \E^\rho_{1,\tau} \to \E^\rho_{0,\tau}, \ (v,\rho) \mapsto v|_{\Gamma_u} \cdot \nablaH \rho - R(x,\rho)
\end{equation*}
is continuously differentiable and real analytic and therefore arguing as in the proof for interior regularity yields the claim.
\end{proof}

\section{Global well-posedness: stochastic model}\label{sec: stoch} 

We start by stating the stochastic maximal regularity result concerning the linear system \eqref{eq: stoch eq}, which goes back to \cite{Veraar2012a,Veraar2012b}. Note that \eqref{eq: stoch eq} can be considered without shift for finite time $\tau$.

\begin{prop}[Stochastic maximal regularity]\label{prop: stoch max reg}
    Let $0< \tau < \infty$. Then for any strongly $\mathcal{F}_0$-measurable $H =(0 , H_\rho) \in \rL^2(\Omega\times (0,\tau);\rL^2(\mathcal{U}; \D(-A+\omega)^{\nicefrac{1}{2}}))$ the stochastic convolution given by
    \begin{equation}
        \label{eq: stoch convo}
        Z(t) := \int_0^t \mathrm{e}^{-(t-s)A} H(s) \d W(s) 
    \end{equation}
    is well-defined, $\mathcal{F}$-adapted and defines the unique solution to \eqref{eq: stoch eq} satisfying
    \begin{equation*}
        Z \in \rL^2\bigl (\Omega;\rH^{\theta}(0,\tau;\D(-A+\omega)^{1-\theta}) \bigr ) \cap \rL^2\bigl (\Omega;\rC([0,\tau];\rX_\gamma^{(T,\rho)})\bigr ) \ \text{ for any } \theta \in [0,1/2).
    \end{equation*}
\end{prop}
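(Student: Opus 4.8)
The plan is to obtain this statement as a direct application of the abstract stochastic maximal $\rL^2$-regularity theory of van Neerven, Veraar and Weis \cite{Veraar2012a, Veraar2012b}, whose sole structural hypothesis—a bounded $\mathcal{H}^\infty$-calculus of angle strictly below $\pi/2$—has already been verified in \autoref{lem: max reg A}. First I would record that in the stochastic setting we take $p=2$, so the ground space $\rX^{(T,\rho)}_0 = \rL^2(\mathcal{O}) \times \rL^2(G)$ is a Hilbert space, hence UMD of martingale type $2$; in this case the space of $\gamma$-radonifying operators $\gamma(\mathcal{U}, \rX^{(T,\rho)}_0)$ coincides isometrically with the Hilbert--Schmidt operators $\rL^2(\mathcal{U}; \rX^{(T,\rho)}_0)$, which identifies the hypothesis $H \in \rL^2(\Omega \times (0,\tau); \rL^2(\mathcal{U}; \D((-A+\omega)^{1/2})))$ with the $\gamma$-valued forcing required by the abstract theorem. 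By the remark preceding the statement, and since $\mathrm{e}^{\omega t}$ is bounded on the finite interval $[0,\tau]$, the $\omega$-shift is harmless: conjugating by $\mathrm{e}^{\omega t}$ replaces \eqref{eq: stoch eq} by an equation governed by the sectorial operator $-A+\omega$, which by \autoref{lem: max reg A} has bounded $\mathcal{H}^\infty$-calculus of angle $\Phi_A < \pi/2$, together with an equivalent forcing, without altering the regularity class.

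Second, I would invoke the basic stochastic maximal $\rL^2$-regularity estimate for $-A+\omega$: for an $\mathcal{F}$-adapted $G \in \rL^2(\Omega \times (0,\tau); \gamma(\mathcal{U}, \rX^{(T,\rho)}_0))$ the associated stochastic convolution is well-defined, $\mathcal{F}$-adapted, is the unique solution of the linear equation driven by $G$, and gains half a spatial derivative, i.e. lies in $\rL^2(\Omega; \rL^2(0,\tau; \D((-A+\omega)^{1/2})))$. Applied with $G=H$ this already yields well-definedness and adaptedness of $Z$ in \eqref{eq: stoch convo} and its characterization as the unique solution of \eqref{eq: stoch eq}. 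To capture the stated regularity I would exploit the extra half-derivative built into the hypothesis: since $H$ is valued in $\D((-A+\omega)^{1/2})$ and $(-A+\omega)^{1/2}$ commutes with both the semigroup and the stochastic integral, the forcing $(-A+\omega)^{1/2}H$ lies in $\rL^2(\Omega\times(0,\tau); \gamma(\mathcal{U}, \rX^{(T,\rho)}_0))$ and $(-A+\omega)^{1/2}Z$ is precisely its convolution. The sharp space--time trade-off form of the theorem applied to $(-A+\omega)^{1/2}Z$ then delivers $Z \in \rL^2(\Omega; \rH^{\theta}(0,\tau; \D((-A+\omega)^{1-\theta})))$ for every $\theta \in [0,1/2)$.

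For the pathwise continuity $Z \in \rL^2(\Omega; \rC([0,\tau]; \rX^{(T,\rho)}_\gamma))$ I would note that, at $p=2$, the trace space is $\rX^{(T,\rho)}_\gamma = (\rX^{(T,\rho)}_0, \D(A))_{\nicefrac{1}{2},2} = \D((-A+\omega)^{1/2})$ with equivalent norms, since $-A+\omega$ has bounded imaginary powers (a fortiori from its bounded $\mathcal{H}^\infty$-calculus). Continuity of paths into this space then follows from the trade-off regularity obtained above combined with a trace/interpolation embedding in time, exactly as in the path-regularity part of the cited stochastic maximal regularity results.

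The main obstacle, I expect, is not a single computation but the careful use of the abstract machinery at its two genuinely stochastic points: passing from the base-level gain $\rL^2(\D((-A+\omega)^{1/2}))$ to the full trade-off $\rH^{\theta}(\D((-A+\omega)^{1-\theta}))$, where one crucially cannot take $\theta = 1/2$ (reflecting the Hölder-$\nicefrac{1}{2}^-$ time regularity of the It\^o integral), and upgrading to pathwise continuity into $\rX^{(T,\rho)}_\gamma$. Both are by now standard consequences of \cite{Veraar2012a, Veraar2012b} once the $\mathcal{H}^\infty$-hypothesis of \autoref{lem: max reg A} and the Hilbert-space identification $\gamma(\mathcal{U}, \rX^{(T,\rho)}_0) = \rL^2(\mathcal{U}; \rX^{(T,\rho)}_0)$ are in place.
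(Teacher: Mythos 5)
Your proposal is correct and takes essentially the same route as the paper, which offers no proof at all: it simply records that the proposition goes back to van Neerven--Veraar--Weis \cite{Veraar2012a,Veraar2012b} once the bounded $\mathcal{H}^\infty$-calculus of \autoref{lem: max reg A} is available and the shift $\omega$ is removed on the finite interval $(0,\tau)$ -- precisely your argument, including the Hilbert-space identification $\gamma(\mathcal{U},\rX_0^{(T,\rho)}) = \rL^2(\mathcal{U},\rX_0^{(T,\rho)})$ at $p=2$ and the transfer of the extra half derivative via $(-A+\omega)^{\nicefrac{1}{2}}Z$. One small caution: for the pathwise continuity, do not lean on a time-embedding of $\rH^{\theta}$ with $\theta<\nicefrac{1}{2}$ (which does not reach $\rC([0,\tau];\cdot)$); instead apply the path-continuity part of the cited results (or the classical Da Prato--Zabczyk continuity of stochastic convolutions at the base level) directly to $(-A+\omega)^{\nicefrac{1}{2}}Z$, exactly as your commuting observation already sets up, together with your identification $\rX_\gamma^{(T,\rho)} = \D((-A+\omega)^{\nicefrac{1}{2}})$.
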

\noindent
We are now in position to proof global well-posedness of the system \eqref{eq: primitive + EBM simplified stohcastic}.
\begin{proof}[Proof of \autoref{thm: global stoch}]
    Local, maximal existence can be deduced similarly to \autoref{sec: lin and loc} by showing that the non-linear terms on the surface can be controlled in terms of the solution. Concerning the transport term, we observe in view of H\"older's and Jensen's inequalities as well as the embedding \(\E_{1,\tau}^v \hookrightarrow \rL^4(0,\tau; \rL^4(\mathcal{O}))\) that
    \[
    \begin{aligned}
        \| \bar{v} \cdot \nablaH Z_\rho \|_{\rL^2((0,\tau) \times G)} &\leq C \| \bar{v} \|_{\rL^4(0,\tau; \rL^4(G))} \cdot \| \nablaH Z_\rho \|_{\rL^4(0,\tau;\rL^4(G))} \\
        &\leq C \| v \|_{\rL^4(0,\tau; \rL^4(\mathcal{O}))} \cdot \| Z_\rho \|_{\rL^4(0,\tau;\rH^{1,4}(G))} \\
        &\leq C \| v \|_{\E_{1,\tau}^v} \cdot \| Z_\rho \|_{\rL^4(0,\tau;\rH^{1,4}(G))}.
    \end{aligned}
    \]
    By interpolation and H\"older's inequality, we obtain
    \[
    \|Z_\rho\|_{\rL^4(0,\tau;\rH^{1,4}(G))} \leq C \| Z_\rho \|_{\rL^\infty(0,\tau;\rH^{1}(G))}^{\nicefrac{1}{2}} \cdot \| Z_\rho \|_{\rL^2(0,\tau;\rH^{2}(G))}^{\nicefrac{1}{2}},
    \] 
    and therefore Young's inequality yields
    \begin{equation*}
         \| \bar{v} \cdot \nablaH Z_\rho \|_{\rL^2((0,\tau) \times G)} \leq C \bigl (\| v \|_{\E^v_{1,\tau}} \cdot \| Z_\rho \|_{\rL^\infty(0,\tau;\rH^1(G))} + \| v \|_{\E^v_{1,\tau}} \cdot \| Z_\rho \|_{\rL^2(0,\tau;\rH^2(G))} \bigr ).
    \end{equation*}
    Next, it follows from \eqref{eq: radiation}  that
    \[
    \| R(x,\tilde{\rho}+ Z_\rho) \|_{\rL^2((0,\tau) \times G)} \leq \| Q(x)\beta(\tilde{\rho}+ Z_\rho ) \|_{\rL^2((0,\tau) \times G)} + \| (\tilde{\rho}+Z_\rho )^4 \|_{\rL^2((0,\tau) \times G)}.
    \]
    The first term on the right-hand side can be estimated as in the proof of \autoref{lem: esimates nonlinearities version 2}. Next, we have
    \[
    \| (\tilde{\rho}+Z_\rho )^4 \|_{\rL^2((0,\tau) \times G)} \leq C \Bigl( \|\tilde{\rho}^4\|_{\rL^2((0,\tau) \times G)} + \|Z_\rho^4 \|_{\rL^2((0,\tau) \times G)} \Bigr),
    \]
    and, similarly, the first term on the right-hand side can be estimated as in \autoref{lem: esimates nonlinearities version 2}. For the remaining term, H\"older's inequality implies that
    \[
    \|Z_\rho^4 \|_{\rL^2((0,\tau) \times G)} \leq C \|Z_\rho\|^4_{\rL^8(0,\tau; \rL^8(G))} \leq C \|Z_\rho\|_{\rL^\infty(0,\tau; \rH^1(G))}^{4}.
    \]
    Arguing as in the proof of \autoref{thm: local wp}(a) concludes the existence of a  unique, local, maximal solution of system \eqref{eq: primitive + EBM simplified1}. To show that  the local solution extends to a global one, we adapt the a priori estimates derived in \autoref{sec: global} accordingly. Multiplying \eqref{eq: primitive + EBM simplified1}$_1$ by $v$ as well as \eqref{eq: primitive + EBM simplified1}$_3$ by $\tilde{T}$ and integrating over the respective domains yields
    \begin{equation*}
        \begin{aligned}
            &\frac{1}{2}\dt \bigl ( \| v \|^2_2 + \| \tilde{T} \|^2_2 + \| \tilde{\rho} \|^2_2 \bigr ) + \| \nabla v \|^2_2 + \| \nabla \tilde{T} \|^2_2 + \| \nablaH \tilde{\rho} \|^2_2 + \| \tilde{\rho} \|^5_5  \\ &\le   \int_\mathcal{O} \nablaH \int_0^z \tilde{T}(\cdot,\xi) \d \xi \cdot v  - \int_G \vbar \cdot \nablaH Z_\rho\cdot \tilde{\rho}  +\int_G Q(x)\beta(\tilde{\rho} + Z_\rho)  \cdot \tilde{\rho} -\int_G|\tilde{\rho}+Z_\rho|^3 \cdot Z_\rho \cdot \tilde{\rho}.
        \end{aligned}
    \end{equation*}
    Using H\"older's as well as Young's inequality yields 
    \begin{equation*}
        \bigl | \int_\mathcal{O} \nablaH \int_0^z \tilde{T}(\cdot,\xi) \d \xi \cdot v \bigr | \leq \eps \| \nabla T \|^2_2 + C \| v \|^2_2,
    \end{equation*}
    for an appropriate constant $C>0$. Concerning the second addend, H\"older's, Young's and interpolation inequality yield the estimate
    \begin{equation*}
        \bigl | \int_G \vbar \cdot \nablaH Z_\rho \cdot \tilde{\rho} \bigr | \leq  \eps \bigl ( \| v \|^2_{\rH^1} +  \| \tilde{\rho} \|^2_{\rH^1}\bigr ) + C \bigl \| Z_\rho \|^2_{\rH^1} ( \| v \|^2_2 + \| \tilde{\rho} \|^2_2 \bigr )
    \end{equation*}
   Next, note that the third addend can be estimated by similar arguments as in \autoref{sec: global}, using boundedness of $Q$ and $\beta$. Finally, H\"older's and Young's inequality imply
   \begin{equation*}
       \bigl | \int_G|\tilde{\rho}+Z_\rho|^3 \cdot Z_\rho \cdot \tilde{\rho} \bigr | \leq \| \tilde{\rho}+ Z_\rho \|^3_5 \cdot \|Z_\rho \|_5 \cdot \| \tilde{\rho}\|_5 \leq \| \tilde{\rho} \|^4_5 \cdot \| Z_\rho \|_5 + \| Z_\rho \|^4_5 \cdot \| \tilde{\rho} \|_5 \leq \eps \| \tilde{\rho} \|^5_5 + C \|Z_\rho \|^5_5.
   \end{equation*}
   Integrating in time, Gronwall's inequality and \autoref{prop: stoch max reg} then yield the energy bound
   \begin{equation*}
          \|{v}\|_{2}^2 +\|\tilde{T}\|_{2}^2 + \|\tilde{\rho}\|_{2}^2+ +  \| \tilde{\rho} \|_{5}^5  2 \int_0^t \|\nabla v\|_{2}^2 +\|\nabla \tilde{T}\|_{2}^2 + \| \nablaH \tilde{\rho}\|_{2}^2 \leq B_1(t), \ \text{ for all }t \in [0,\tau]
   \end{equation*}
   where $B_1$ is a continuous function depending on $\|v_0\|_{\rH^1}$, $\|T_0\|_{\rH^1}$, $\|T_0|_{\Gamma_u} \|_{\rH^1}$ and $t$. From the energy bounds we conclude that
   \begin{equation*}
         \|\nablaH \int_0^z T(\cdot,\xi) \d \xi \|_{ \rL^2((0,t) \times \mathcal{O})} \leq B_1(t)
   \end{equation*}
   and therefore, arguing as in \autoref{sec: global} leads to $v \in \rL^2(0,\tau;\rH_\per^2(\mathcal{O};\R^2)) \cap \rL^\infty(0,\tau;\rH_\per^1(\mathcal{O};\R^2))$. To obtain similar bounds for the temperature, we multiply \eqref{eq: primitive + EBM simplified1}$_3$ by $-\Delta T$, \eqref{eq: primitive + EBM simplified1}$_5$ by $-\DeltaH \rho$, integrate over $\mathcal{O}$ and $G$ respectively, and add the resulting equations to obtain
      \begin{equation*}
        \begin{aligned}
            &\dt \bigl ( \frac{1}{2}\| \nabla \tilde{T} \|^2_2   + \| \nablaH \tilde{\rho} \|^2_2+ \frac{1}{5} \| \tilde{\rho} \|^5_5 \bigr ) + \| \dt \tilde{\rho} \|^2_2 + \| \Delta \tilde{T} \|^2_2 + \| \DeltaH \tilde{\rho} \|^2_2 \\ &\le 
            \int_\mathcal{O} u \cdot \nabla \tilde{T} \cdot \Delta \tilde{T} + \int_G \bigl ( \vbar \cdot \nablaH (\tilde{\rho} +Z_\rho) +Q(x) \beta(\tilde{\rho}+Z_\rho) \bigr ) \cdot \bigl ( \dt \tilde{\rho} + \DeltaH \tilde{\rho} \bigr ) + \int_G (\dz \tilde{T})|_{\Gamma_u}\cdot  \DeltaH \tilde{\rho} \\
            & \quad - \int_G |3\tilde{\rho}^2 \cdot  Z_\rho + 3 \tilde{\rho} \cdot Z_\rho + Z_\rho^3| \cdot  (\tilde{\rho}+Z_\rho) \cdot ( \dt \tilde{\rho} + \DeltaH \tilde{\rho}).
        \end{aligned}
    \end{equation*}
    Since $Z_\rho \in \rL^2(0,\tau;\rH_\per^2(\mathcal{O})) \cap \rL^\infty(0,\tau;\rH_\per^1(\mathcal{O}))$, the terms in the second line of the above inequality can be estimated as in \autoref{sec: global}. Using H\"older's and Young's inequality, we obtain for the term in the third line
    \begin{equation*}
        \bigl |\int_G |3\tilde{\rho}^2 \cdot  Z_\rho + 3 \tilde{\rho} \cdot Z_\rho + Z_\rho^3| \cdot  (\tilde{\rho}+Z_\rho) \cdot ( \dt \tilde{\rho} + \DeltaH \tilde{\rho}) \bigr | \leq C \|  |3\tilde{\rho}^2 \cdot  Z_\rho + 3 \tilde{\rho} \cdot Z_\rho + Z_\rho^3| \cdot  (\tilde{\rho}+Z_\rho) \|^2_2 + \eps \bigl (\| \dt \tilde{\rho} \|^2_2 + \| \DeltaH \tilde{\rho} \|^2_2 \bigr ).
    \end{equation*}
    Here we further estimate using interpolation to obtain
    \begin{equation*}
        \begin{aligned}
            \| \tilde{\rho}^3 \cdot Z_\rho \|^2_2 &\leq C \| \tilde{\rho} \|^6_8 \cdot \| Z_\rho \|^2_8 \leq C \| \tilde{\rho} \|^{\nicefrac{15}{4}}_5 \cdot \| \tilde{\rho} \|^{\nicefrac{9}{4}}_{\rH^1} \cdot \| Z_\rho \|^2_{\rH^1} \leq C \| \tilde{\rho} \|^2_{\rH^1} \cdot \| Z_\rho \|^{\nicefrac{8}{3}}_{\rH^1} \cdot \| \tilde{\rho} \|^5_5 + C \| \tilde{\rho}\|_{\rH^1} \cdot \| \tilde{\rho} \|^2_{\rH^1}, \\
             \| \tilde{\rho}^2 \cdot Z_\rho^2 \|^2_2 &\leq \| \tilde{\rho} \|^4_4 \cdot  \| Z_\rho \|^4_4 \leq C \| \tilde{\rho} \|^5_5 + C \| Z_\rho \|^{20}_{\rH^1}
        \end{aligned}
    \end{equation*}
    and by similar arguments it holds true that
    \begin{equation*}
        \| \tilde{\rho} \cdot Z^3_\rho \|^2_2 + \| \tilde{\rho} \cdot Z_\rho^2 \|^2_2 + \| Z_\rho^4 \|^2_2\leq C \| \tilde{\rho} \|^5_5 + \| Z_\rho \|^\delta_{\rH^1} + C, 
    \end{equation*}
    for some $\delta \in \mathbb{N}$. By  $Z_\rho \in \rL^\infty(0,\tau;\rH_\per^1(\mathcal{O}))$, integrating in time and Gronwall's inequality, we conclude the existence of a continuous function $B_2$ such that
      \begin{equation*}
    \| \nabla \tilde{T} \|^2_{2} + \| \nablaH \tilde{\rho} \|^2_{2} + \| \tilde{\rho} \|^5_5 \int_0^t  \| \Delta \tilde{T} \|^2_2 + \| \DeltaH  \tilde{\rho}  \|^2_{2} \leq B_3(t)
\end{equation*}
for all $0\leq t \leq \tau$. Finally by maximal $\rL^2$-regularity the solution $(v,\tilde{T},\tilde{\rho})$ admits the estimate
\begin{equation*}
    \begin{aligned}
        \| (v,\tilde{T},\tilde{\rho}) \|_{\E_{1,\tau}} &\leq  
    C \bigl ( \| \nablaH \int_0^z \tilde{T}(\cdot, \xi) \d \xi - v \cdot \nablaH v - w(v) \cdot \dz v \|_{\E_{0,\tau}^v} \\& \quad + \| (-v \cdot \nablaH \tilde{T} - w(v) \cdot \dz \tilde{T},  R(x,\tilde{\rho}+Z_\rho)-\vbar \cdot \nablaH (\tilde{\rho}+ Z_\rho) \|_{\E_{0,\tau}^T} \bigr ).
    \end{aligned}
\end{equation*}
Arguing as in the proof of \autoref{thm: local wp}(b), we see that the bounds for $(v,\tilde{T},\tilde{\rho})$ derived above together with \autoref{prop: stoch max reg} are sufficient to control the solution norm of $(v,\tilde{T},\tilde{\rho})$.
\end{proof}

 
{\bf Data availability}
\small{We do not analyse or generate any datasets, because our work proceeds within a theoretical
and mathematical approach.}

{\bf Conflict of interest}
\small{The authors declare that there is no conflict of interest.}


\end{document}